\documentclass[11pt,twoside]{article}

\usepackage{jcd}
\usepackage{fullpage}
\renewcommand\eqref[1]{(\ref{#1})}

\usepackage{hyperref}

\def\tvnorm#1{\left\|{#1}\right\|_{\rm TV}}
\def\xdomain{\mc{X}}
\def\radius{R}
\def\what#1{\widehat{#1}}
\def\term{\ensuremath{S}} 

\def\stochmat{P} 
\def\stochvec{p} 
\def\laplacian{\ensuremath{\mc{L}}} 
\def\degree{\delta} 
\newcommand{\vol}{\mathop{\rm vol}} 
\def\stepsize{\alpha} 
\def\failprob{\rho} 
\renewcommand\P{\mathbb{P}} 
\def\defeq{\triangleq} 
\def\ninv{\frac{1}{n}}
\def\zb{\ensuremath{\bar{z}}}
\def\onevec{\ensuremath{1\!\!1}}
\def\compositeproject#1{\Pi_\xdomain^{#1}}
\def\norm#1{\ensuremath{\left\|#1\right\|}}
\def\dnorm#1{\ensuremath{\left\|#1\right\|_{*}}}
\def\dnormb#1{\ensuremath{\bigg\|{#1}\bigg\|_*}}
\def\prox{\psi}
\def\proxdual{\psi^*}
\def\hinge#1{\left[{#1}\right]_+}
\def\bindic#1{\mathbb{I}\left({#1}\right)}
\def\neighbor{N}
\newcommand{\order}{\ensuremath{\mathcal{O}}}
\renewcommand{\defeq}{\ensuremath{\; : \, = }}
\newcommand{\numnode}{\ensuremath{n}}
\newcommand{\Var}{\ensuremath{\mathop{\rm Var}}}



\newcommand{\mydegree}{\ensuremath{\delta}}
\newcommand{\graph}{\ensuremath{G}}
\newcommand{\vertex}{\ensuremath{V}}

\newcommand{\dmax}{\ensuremath{\mydegree_{\operatorname{max}}}}
\newcommand{\dmin}{\ensuremath{\mydegree_{\operatorname{min}}}}

\newcommand{\Tspec}[2]{\ensuremath{T_\graph(#1; #2)}}

\newcommand{\real}{\ensuremath{\R}}

\newcommand{\Fmat}{\ensuremath{F}}

\newcommand{\Sstar}{\ensuremath{S}}
 
\newcommand{\NUMSIM}{20}

\makeatletter
\long\def\@makecaption#1#2{
        \vskip 0.8ex
        \setbox\@tempboxa\hbox{\small {\bf #1:} #2}
        \parindent 1.5em  
        \dimen0=\hsize
        \advance\dimen0 by -3em
        \ifdim \wd\@tempboxa >\dimen0
                \hbox to \hsize{
                        \parindent 0em
                        \hfil 
                        \parbox{\dimen0}{\def\baselinestretch{0.96}\small
                                {\bf #1.} #2
                                } 
                        \hfil}
        \else \hbox to \hsize{\hfil \box\@tempboxa \hfil}
        \fi
        }
\makeatother


\begin{document}

\begin{center}
  {\bf{\Large{
        Dual Averaging for Distributed Optimization: \\
        Convergence Analysis and Network Scaling}}} \\

\vspace*{.2in}

\begin{tabular}{ccc}
 John C. Duchi$^1$ & Alekh Agarwal$^1$ &  Martin J. Wainwright$^{1,2}$ \\
{\texttt{jduchi@cs.berkeley.edu}} &
{\texttt{alekh@cs.berkeley.edu}} &
{\texttt{wainwrig@eecs.berkeley.edu}}
\end{tabular}

\vspace*{.2in}

\begin{tabular}{ccc}
Department of Electrical Engineering and Computer Sciences$^1$ & &
Department of Statistics$^2$   \\
UC Berkeley, Berkeley, CA 94720 & &
UC Berkeley, Berkeley, CA 94720 
\end{tabular}

\vspace*{.2in}

\today

\begin{abstract}
The goal of decentralized optimization over a network is to optimize a
global objective formed by a sum of local (possibly nonsmooth) convex
functions using only local computation and communication.  It arises
in various application domains, including distributed tracking and
localization, multi-agent co-ordination, estimation in sensor
networks, and large-scale optimization in machine learning.  We
develop and analyze distributed algorithms based on dual averaging of
subgradients, and we provide sharp bounds on their convergence rates as a
function of the network size and topology. Our method of analysis
allows for a clear separation between the convergence of the
optimization algorithm itself and the effects of communication
constraints arising from the network structure.  In particular, we
show that the number of iterations required by our algorithm scales
inversely in the spectral gap of the network.  The sharpness of this
prediction is confirmed both by theoretical lower bounds and
simulations for various networks.  Our approach includes both the
cases of deterministic optimization and communication, as well as
problems with stochastic optimization and/or communication.
\end{abstract}
\end{center}


\section{Introduction}
\label{sec:intro}

The focus of this paper is the development and analysis of distributed
algorithms for solving convex optimization problems that are defined
over networks.  Such network-structured optimization problems arise in
a variety of application domains within the information sciences and
engineering.  For instance, problems such as multi-agent coordination,
distributed tracking and localization, estimation problems in sensor
networks and packet routing are all naturally cast as distributed
convex
minimization~\cite{BertsekasTs89,LiWoHuSa02,LesserOrTa03,RabbatNo04,XiaoBoKi07}.
Common to these problems is the necessity for completely decentralized
computation that is locally light---so as to avoid overburdening small
sensors or flooding busy networks---and robust to periodic link or
node failures. As a second example, data sets that are too large to be
processed quickly by any single processor present related
challenges. A canonical example that arises in statistical machine
learning is the problem of minimizing a loss function averaged over a
large dataset (e.g., optimization in support vector
machines~\cite{CortesVa95}). With terabytes of data, it is desirable
to assign smaller subsets of the data to different processors, and the
processors must communicate to find parameters that minimize the loss
over the entire dataset. However, the communication should be
efficient enough that network latencies do not offset computational
gains.

Distributed computation has a long history in optimization.  Primal
and dual decomposition methods that lend themselves naturally to a
distributed paradigm have been known for at least fifty years, and
their behavior is well understood
(e.g.,~\cite{DantzigWo60,Bertsekas99}).  The 1980s saw significant
interest in distributed detection, consensus, and minimization. The
seminal work of Tsitsiklis and
colleagues~\cite{Tsitsiklis84,TsitsiklisBeAt86,BertsekasTs89} analyzed
algorithms for minimization of a smooth function $f$ known to several
agents while distributing processing of components of the parameter
vector $x \in \R^n$. An important special case of network optimization---with
much faster convergence rates than those known for general distributed
optimization---is consensus averaging, where each processor in the
network must agree on a single (vector-valued) variable. This is
recovered from our objective~\eqref{eqn:objective} by setting $f_i(x)
= \|x - \theta_i\|_2^2$. A number of researchers have obtained sharp
convergence results for distributed consensus algorithms by studying
network topology and using spectral properties of random walks or path
averaging arguments on the underlying graph structure (e.g.,
see~\cite{BoydGhPrSh06,BenezitDiThVe08,DimakisSaWa08} and references
therein). 
Allowing stochastic gradients also lets us tackle distributed averaging with
noise~\cite{XiaoBoKi07}. Mosk-Aoyama et al.~\cite{Mosk-AoyamaRoSh2010}
consider a problem related to our setup, minimizing $\sum_{i=1}^n f_i(x_i)$
for $x_i \in \R$ subject to linear equality constraints, and they obtain rates
of convergence dependent on network-conductance using an algorithm similar to
dual decomposition.
More recently, a few
researchers have shifted focus to problems in which each processor
locally has its own convex (potentially non-differentiable) objective
function~\cite{NedicOz09,LobelOz09}. Whereas these initial papers
treated the case of unconstrained optimization, more recent work by
Ram et al.~\cite{RamNeVe10} analyzes a projected subgradient algorithm
for distributed minimization of non-smooth functions with constraints.

Our paper makes two main contributions.  The first contribution is to
provide a new simple subgradient algorithm for distributed constrained
optimization of a convex function; we refer to it as a \emph{dual
averaging subgradient method}, since it is based on maintaining and
forming weighted averages of subgradients throughout the network.
This approach is essentially different from previously developed
methods~\cite{NedicOz09,LobelOz09,RamNeVe10}, and these differences
facilitate our analysis of network scaling issues, meaning how
convergence rates depend on network size and topology.  Indeed, the
second main contribution of this paper is a careful analysis that
demonstrates a close link between convergence of the algorithm and the
underlying spectral properties of the network. Our analysis splits the
convergence rate of the algorithm into two terms: an optimization term
and a network deviation term. We obtain the optimization penalty using
techniques based on the optimization literature, specifically building
on results due to Nesterov~\cite{Nesterov09}.  This splitting approach
can also be adapted to naturally handle issues such as constrained
optimization, stochastic communication, and stochastic optimization
due to elegant properties of the dual averaging algorithm. On the
other hand, the network scaling terms are obtained using techniques
from analysis of Markov chains coupled with the distributed
communication protocol.  We show that the network deviation terms we
derive are sharp for our algorithm; in the special case of the
consensus problem, these terms are known to be
near-optimal~\cite{BoydGhPrSh06}.

By comparison to previous work, our convergence results and proofs are
different, and our characterization of the network scaling terms are
often much stronger.  In particular, the convergence rates given by
the papers~\cite{NedicOz09,LobelOz09} grow exponentially in the number
of nodes $n$ in the network. Nedi\'{c} et al.~\cite{NedicOlOzTs09} and
Ram et al.~\cite{RamNeVe10} provide a much tighter analysis that
yields convergence rates that scale polynomially in the network size,
but are independent of the network topology (apart from requiring
strong connectedness and degree independent of $n$).  Specifically,
Corollary 5.5 in the paper~\cite{RamNeVe10} guarantees that their
projected subgradient algorithm---under the assumptions that the
number of time steps is known a priori and the stepsize is chosen
optimally---obtains an $\epsilon$-optimal solution to the optimization
problem in $\order(n^3 / \epsilon^2)$ time.  Since this bound is
essentially independent of network topology, it does not capture the
intuition that distributed algorithms should converge much faster on
``well-connected'' networks---expander graphs being a prime
example---than on poorly connected networks (e.g., chains, trees or
single cycles). Johansson et al.~\cite{JohanssonRaJo09} analyze a low
communication peer-to-peer protocol that attains rates dependent on
network structure. However, in their algorithm only one node has a
current parameter value, while all nodes in our algorithm maintain
good estimates of the optimum at all time.  This is important in
online, streaming, and control problems where nodes are expected to
act or answer queries in real time.  In additional comparison to
previous work, our analysis gives network scaling terms that are often
substantially sharper. Our development yields an algorithm with
convergence rate that scales inversely in the spectral gap of the
network.  By exploiting known results on spectral gaps for graphs with
$n$ nodes, we show that (disregarding logarithmic factors) our
algorithm obtains an $\epsilon$-optimal solution in $\order(n^2 /
\epsilon^2)$ iterations for a single cycle or path,
$\order(n/\epsilon^2)$ iterations for a two-dimensional grid, and
$\order(1/\epsilon^2)$ iterations for a bounded degree expander graph.
Moreover, simulation results show an excellent agreement with these
theoretical predictions.

Our analysis covers several settings for distributed minimization. We
begin by studying fixed communication protocols, which are of interest
in a variety of areas such as cluster computing or sensor networks
with a fixed hardware-dependent protocol. We also investigate
randomized communication protocols as well as randomized network
failures, which are often essential to handle gracefully in wireless
sensor networks and large clusters with potential node failures.
Randomized communication also provides an interesting tradeoff between
communication savings and convergence rates. In this setting, we
obtain much sharper results than previous work by studying the
spectral properties of the expected transition matrix of a random walk
on the underlying graph. We also present an analysis of our algorithm
with stochastic gradient information, which is not difficult when
combined with our initial theorems. We describe an extension for
(structured) regularized objectives that often arise in machine
learning problems in Appendix~\ref{app:composite}.

The remainder of this paper is organized as follows.  Section~\ref{sec:setup}
is devoted to a formal statement of the problem and description of the dual
averaging algorithm, whereas Section~\ref{sec:results} states the main results
and consequences of our paper. Having stated our main results, in
Section~\ref{sec:related-work} we give a more in-depth comparison of our work
with other recent work. In Section~\ref{sec:master-convergence}, we state and
prove basic convergence results on the dual averaging algorithm, which we then
exploit in Section~\ref{sec:fixed-p-convergence} to derive concrete results
that depend on the spectral gap of the network.
Sections~\ref{sec:random-P-convergence} and~\ref{sec:stoch-gradient} treat
extensions with noise, in particular algorithms with noisy communication and
stochastic gradient methods respectively.  In Section~\ref{sec:simulations},
we present the results of simulations that confirm the sharpness of our
analysis.

\vspace*{.2cm}

\noindent {\bf{Notation:}} We collect some notation used throughout
the paper.  We use $\onevec$ to denote the all-ones vector. We also
use standard asymptotic notation for sequences.  If $a_n$ and $b_n$
are positive sequences, then $a_n = \order(b_n)$ means that $\limsup_n
a_n / b_n < \infty$, whereas $a_n = \Omega(b_n)$ means that $\liminf_n
a_n / b_n > 0$.  On the other hand, $a_n = o(b_n)$ means that $\lim_n
a_n / b_n = 0$ and $a_n = \omega(b_n)$ means that $\lim_n a_n/b_n =
\infty$. Finally, we write $a_n = \Theta(b_n)$ if $a_n = \order(b_n)$
and $a_n = \Omega(b_n)$.

\section{Problem set-up and algorithm}
\label{sec:setup}

In this section, we provide a formal statement of the distributed
minimization problem, and a description of the distributed dual
averaging algorithm.

\subsection{Distributed minimization}

We consider an optimization problem based on functions that are
distributed over a network.  More specifically, let $G = (V, E)$ be an
undirected graph over the vertex set $V = \{1, 2, \ldots, \numnode\}$ with
edge set $E \subset V \times V$.  Associated with each $i \in V$ is
convex function $f_i:\R^d \rightarrow \R$, and our overarching goal is
to minimize the sum
\begin{equation*}
f(x)  = \ninv\sum_{i=1}^n f_i(x),
\end{equation*}
subject to the constraint that $x \in \R^d$ belongs to some closed convex set
$\xdomain$---that is, solve the problem
\begin{equation}
  \label{eqn:objective}
  \min_x ~ \ninv\sum_{i=1}^n f_i(x) \qquad \subjectto ~ x \in
  \xdomain.
\end{equation}
Each function $f_i$ is convex and hence sub-differentiable, but need
not be smooth. We assume without loss of generality that $0 \in
\xdomain$, since we can simply translate $\xdomain$. Each node $i \in
V$ is associated with a separate agent, and each agent $i$ maintains
its own parameter vector $x_i \in \R^d$.  The graph $G$ imposes
communication constraints on the agents: in particular, agent $i$ has
local access to only the objective function $f_i$ and can communicate
directly only with its immediate neighbors $j \in \neighbor(i) \defeq
\{j \in V \, \mid \, (i,j) \in E \}$.

Problems of this nature arise in a variety of application domains, and
as motivation for the analysis to follow, let us consider a few
here. A first example is a sensor network, in which each agent
represents a sensor mote, equipped with a radio transmitter for
communication, some basic sensing devices, and some local memory and
computational power.  In environmental applications of sensor networks,
each mote $i$ might take a measurement $y_i$ of the temperature, and
the global objective could be to compute the median of the
measurements $\{y_1, y_2, \ldots, y_n \}$.  This median computation
problem can be formulated as minimizing the scalar objective function
$\frac{1}{n} \sum_{i=1}^n f_i(x)$, where $f_i(x) = |x - y_i|$.
Similar formulations apply to the problem of computing other statistics
such as means, variances, quantiles and other $M$-estimators.

A second motivating example is the machine learning problem first described in
Section~\ref{sec:intro}.  In this case, the set $\xdomain$ is the parameter
space of the statistician or learner. Each function $f_i$ is the empirical
loss over the subset of data assigned to the $i$th processor, and assuming
that each subset is of equal size (or that the $f_i$ are normalized suitably),
the average $f$ is the empirical loss over the entire dataset.  Here we use
cluster computing as our computational model, where each processor is a node
in the cluster, and the graph $G$ contains edges between those
processors that are directly connected with small network latencies. A special
case of our optimization problem within this computational model is the
distributed perceptron, recently considered by McDonald et
al.~\cite{McDonaldHaMa2010}.

\subsection{Standard dual averaging}

Our algorithm is based on a projected dual averaging
algorithm~\cite{Nesterov09}, designed for minimization of a
(potentially nonsmooth) convex function $f$ subject to the constraint
$x \in \xdomain$.  We begin by describing the standard version of this
algorithm, and then discuss the extensions for the distributed setting
of interest in this paper.

The dual averaging scheme is based on a \emph{proximal function}
$\prox: \R^d \rightarrow \R$ that is assumed to be $1$-strongly convex
with respect to some norm $\norm{\cdot}$---more precisely, the
proximal function satisfies
\begin{equation*}
\qquad
\prox(y) \geq \prox(x) + \<\nabla \prox(x), y - x\> + \half
\norm{x - y}^2 \qquad \mbox{for all $x, y \in \xdomain$.}
\end{equation*}
In addition, we assume that $\prox(x) \geq 0$ for all $x \in \xdomain$
and that $\prox(0) = 0$; these are standard assumptions that can be
made without loss of generality. Examples of such proximal function
and norm pairs include:
\begin{itemize}
\item the quadratic $\prox(x) = \half \ltwo{x}^2$, which is the
  canonical proximal function. Clearly $\half\ltwo{0}^2 = 0$, and
  $\half \ltwo{x}^2$ is strongly convex with respect to the
  $\ell_2$-norm for $x \in \R^d$.
\item the entropic function \mbox{$\prox(x) = \sum_{i=1}^d x_i \log
  x_i - x_i$}, which is strongly convex with respect to the
  $\ell_1$-norm for $x$ in the probability simplex, $\{x \mid x
  \succeq 0, \<x, \onevec\> = 1\}$.
\end{itemize}

We assume that each function $f_i$ is \emph{$L$-Lipschitz} with
respect to the same norm $\norm{\cdot}$---that is,
\begin{align}
  \label{eqn:lipschitz}
  |f_i(x) - f_i(y)| & \leq L \norm{x - y} \qquad \mbox{for $x, y \in
    \xdomain$.}
\end{align}
There are many cost functions $f_i$ that satisfy this type of
Lipschitz condition.  For instance, it holds for any convex function
on a compact domain $\xdomain$, or for any polyhedral function on an
arbitrary domain~\cite{HiriartUrrutyLe96}. Note that the Lipschitz
condition~\eqref{eqn:lipschitz} implies that for any $x \in \xdomain$
and any subgradient $g_i \in \partial f_i(x)$, we have
\begin{equation*}
  \dnorm{g_i} \leq L,
\end{equation*}
where $\dnorm{\cdot}$ denotes the \emph{dual norm} to $\norm{\cdot}$, defined
by $\dnorm{v} \defeq \sup_{\norm{u} = 1} \<v, u\>$.

The dual averaging algorithm generates a sequence of iterates
$\{x(t), z(t) \}_{t=0}^\infty$ contained within $\xdomain \times \R^d$
according to the following steps.  At time step $t$ of the algorithm,
it receives a subgradient $g(t) \in \partial f(x(t))$, and then
performs the updates
\begin{equation}
  z(t + 1) = z(t) + g(t) ~~ \quad \mbox{ and } \quad ~~~~ x(t + 1) =
  \Pi_{\xdomain}^\prox(z(t + 1), \stepsize(t)),
  \label{eqn:standard-lazy}
\end{equation}
where $\{\stepsize(t)\}_{t=0}^\infty$ is a non-increasing sequence of
positive stepsizes and
\begin{equation}
  \Pi_\xdomain^\prox(z, \alpha)
  \defeq \argmin_{x \in \xdomain} \biggr\{\<z, x\> + \frac{1}{\alpha}
  \prox(x)\biggr\}
  \label{eqn:projection}
\end{equation}
is a type of projection.  The intuition underlying this algorithm is
as follows: given the current iterate $(x(t), z(t))$, the next iterate
$x(t + 1)$ to chosen to minimize an averaged first-order approximation
to the function $f$, while the proximal function $\prox$ and stepsize
$\stepsize(t) > 0$ enforce that the iterates $\{x(t)\}_{t=0}^\infty$
do not oscillate wildly. The algorithm is similar to the follow the
perturbed leader and lazy projection algorithms developed in the
context of online optimization~\cite{KalaiVe05}, though in this form
the algorithm seems to be originally due to
Nesterov~\cite{Nesterov09}.
In Section~\ref{sec:master-convergence}, we show that a simple
analysis of the convergence of the above procedure allows us to relate
it to the distributed algorithm we describe.

\subsection{Distributed dual averaging}

We now consider an appropriate and novel extension of dual averaging
to the distributed setting.  At each iteration $t = 1, 2, 3, \ldots$,
the algorithm maintains $n$ pairs of vectors $(x_i(t), z_i(t)) \in
\xdomain \times \R^d$, with the $i^{th}$ pair associated with node $i
\in V$.  At iteration $t$, each node $i \in V$ computes an element
$g_i(t) \in \partial f_i(x_i(t))$ in the subdifferential of the local
function $f_i$ and receives information about the parameters
$\{z_j(t), \; j \in \neighbor(i) \}$ associated with nodes $j$ in its
neighborhood $\neighbor(i)$.  Its update of the current estimated
solution $x_i(t)$ is based on a convex combination of these
parameters.  To model this weighting process, let $\stochmat \in R^{n
\times n}$ be a matrix of non-negative weights that respects
the structure of the graph $G$, meaning that for $i \ne j$,
$\stochmat_{ij} > 0$ only if $(i,j) \in E$.  We assume that
$\stochmat$ is a doubly stochastic matrix, so that
\begin{equation*}
  \sum_{j=1}^n \stochmat_{ij} \; = \; \sum_{j \in \neighbor(i)} \stochmat_{ij}
  \; = \; 1 \qquad {\rm for~all~} i \in V,
  \quad\sum_{i=1}^n\stochmat_{ij}
  = \sum_{i \in \neighbor(j)}\stochmat_{ij} = 1 \qquad
  {\rm for~all}~ j \in V.
\end{equation*}

Using this notation, given the non-increasing sequence $\{\stepsize(t)
\}_{t=0}^\infty$ of positive stepsizes, each node $i \in V = \{1, 2,
\ldots, n\}$ performs the updates
\begin{subequations}
\begin{align}
  \label{eqn:lazy-unproject} 
  z_i(t + 1) & = \sum_{j \in \neighbor(i)} \stochvec_{ij} z_j(t) + g_i(t),
  \quad \mbox{and} \\
  x_i(t + 1) & = \Pi_{\xdomain}^\prox(z_i(t + 1), \stepsize(t)),
  \label{eqn:lazy-project}
\end{align}
\end{subequations}
where the projection $\Pi_{\xdomain}^\prox$ was defined
previously~\eqref{eqn:projection}.  In words, node $i$ computes the
new dual parameter $z_i(t+1)$ from a weighted average of its own
subgradient $g_i(t)$ and the parameters $\{z_j(t), j \in \neighbor(i)
\}$ in its own neighborhood $\neighbor(i)$, and then computes the next
local iterate $x_i(t+1)$ by a projection defined by the proximal
function $\prox$ and stepsize $\stepsize(t) > 0$.

In the sequel, we show convergence of the local sequence
$\{x_i(t)\}_{t=1}^\infty$ to the optimum of \eqref{eqn:objective} via
the \emph{running local average}
\begin{equation}
  \what{x}_i(T) = \frac{1}{T} \sum_{t=1}^T x_i(t).
  \label{eqn:local-average-def}
\end{equation}
Note that this quantity is locally defined at node $i$ and so can be
computed in a distributed manner.  From the definition of updates, it
is clear that each element of the sequence $\{z_i(t)\}_{t=0}^\infty$
is essentially a weighted average of the gradients seen so far, which
is a natural extension of dual averaging. At the same time, as we
shall see, the averaging of the dual parameters in the sequence
$\{z_i(t)\}_{t=0}^\infty$ allows us to neatly sidestep the complexity
arising from non-linearity of projections. We will thus be able to generalize
the algorithm from equations~\eqref{eqn:lazy-unproject}
and~\eqref{eqn:lazy-project} to the case where $\stochmat$ is random
and varies with time as well as when the vectors $g_i(t)$ are noisy
versions of subgradients, satisfying only $\E[g_i(t)] \in \partial
f_i(x_i(t))$.

\section{Main results and consequences}
\label{sec:results}

We will now state the main results of this paper and illustrate some
of their consequences. We give the proofs and a deeper investigation
of related corollaries at length in the sections that follow.

\subsection{Convergence of distributed dual averaging}

We start with a result on the convergence of the distributed dual
averaging algorithm that provides a decomposition of the error into an
optimization term and the cost associated with network communication.
In order to state this theorem, we define the averaged dual variable
\mbox{$\bar{z}(t) \defeq \frac{1}{n} \sum_{i=1}^n z_i(t)$,} and we
recall the definition~\eqref{eqn:local-average-def} of the local
average $\what{x}_i(T)$.

\begin{theorem}[Basic convergence result]
  \label{theorem:master-convergence}
  Let the sequences $\{x_i(t)\}_{t=0}^\infty$ and
  $\{z_i(t)\}_{t=0}^\infty$ be generated by the
  updates~\eqref{eqn:lazy-unproject} and~\eqref{eqn:lazy-project} with
  step size sequence $\{\stepsize(t)\}_{t=0}^\infty$. Then
  for any $x^* \in \xdomain$ and for each
  node $i \in V$, we have
  \begin{align}
  \label{eqn:master-convergence}
    f(\what{x}_i(T)) - f(x^*)
    & \le \frac{1}{T \stepsize(T)}\prox(x^*) + \frac{L^2}{2T}
    \sum_{t=1}^T\stepsize(t-1) \nonumber \\
    & ~~~ + \frac{2 L}{nT} \sum_{t=1}^T \sum_{j=1}^n
    \stepsize(t) \dnorm{\bar{z}(t) - z_j(t)}
    + \frac{L}{T} \sum_{t=1}^T
    \stepsize(t)\dnorm{\bar{z}(t) - z_i(t)}.
  \end{align}
\end{theorem}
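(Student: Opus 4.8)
The plan is to reduce the distributed dynamics to a single centralized dual averaging recursion driven by the averaged subgradient, and then to charge the discrepancy between each node's local iterate and this centralized sequence to the smoothness of the projection operator. First I would exploit the double stochasticity of $\stochmat$: averaging the update~\eqref{eqn:lazy-unproject} over all nodes and using $\sum_{i} \stochmat_{ij} = 1$ collapses the network terms and gives the clean recursion $\zb(t+1) = \zb(t) + \bar{g}(t)$, where $\bar{g}(t) \defeq \ninv \sum_{i=1}^n g_i(t)$. Thus the averaged dual variable $\zb(t)$ evolves exactly as in a centralized dual averaging scheme fed the average subgradient, with no network distortion.

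Second, I would introduce the auxiliary centralized iterate $y(t) \defeq \Pi_{\xdomain}^\prox(\zb(t), \stepsize(t-1))$ and establish a Nesterov-style regret bound for it. Since each $g_i(t)$ is a subgradient of an $L$-Lipschitz $f_i$, we have $\dnorm{g_i(t)} \le L$ and hence $\dnorm{\bar{g}(t)} \le L$; the standard analysis of dual averaging (building on Nesterov~\cite{Nesterov09}, using the $1$-strong convexity of $\prox$ and $\prox(x^*) \ge 0$) then yields $\sum_{t=1}^T \<\bar{g}(t), y(t) - x^*\> \le \frac{1}{\stepsize(T)}\prox(x^*) + \frac{L^2}{2}\sum_{t=1}^T \stepsize(t-1)$. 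Dividing by $T$ supplies the first two terms of~\eqref{eqn:master-convergence}.

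Third, I would control the gap between the local iterates and $y(t)$ using the key Lipschitz property of the projection: because $\prox$ is $1$-strongly convex with respect to $\norm{\cdot}$, the map $z \mapsto \Pi_{\xdomain}^\prox(z, \stepsize)$ is $\stepsize$-Lipschitz from $(\R^d, \dnorm{\cdot})$ into $(\xdomain, \norm{\cdot})$, so that $\norm{x_j(t) - y(t)} \le \stepsize(t)\dnorm{\zb(t) - z_j(t)}$. With this in hand I would assemble the bound via Jensen's inequality, $f(\what{x}_i(T)) \le \frac{1}{T}\sum_{t=1}^T f(x_i(t))$, and for each $t$ split $f(x_i(t)) - f(x^*)$ into $[f(x_i(t)) - f(y(t))]$ plus $\ninv \sum_{j=1}^n [f_j(y(t)) - f_j(x^*)]$. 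The first difference is at most $L\norm{x_i(t) - y(t)} \le L\stepsize(t)\dnorm{\zb(t) - z_i(t)}$, producing the final term. For each summand of the second, I would insert $x_j(t)$ and write $f_j(y(t)) - f_j(x^*) \le L\norm{y(t) - x_j(t)} + \<g_j(t), x_j(t) - y(t)\> + \<g_j(t), y(t) - x^*\>$, using the Lipschitz bound on $f_j$, convexity together with $g_j(t) \in \partial f_j(x_j(t))$, and a splitting of the inner product. The first two terms are each at most $L\stepsize(t)\dnorm{\zb(t) - z_j(t)}$ by the projection estimate and the bound $\dnorm{g_j(t)} \le L$, while averaging the last term over $j$ recovers exactly $\<\bar{g}(t), y(t) - x^*\>$. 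Summing over $t$, dividing by $T$, and combining with the regret bound gives~\eqref{eqn:master-convergence}.

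The main obstacle is precisely this third step: the subgradients $g_j(t)$ are evaluated at the local points $x_j(t)$ rather than at the common centralized iterate $y(t)$, so the centralized regret bound cannot be invoked directly. Overcoming this requires the two-sided bookkeeping above---adding and subtracting both $f_j(y(t))$ and $\<g_j(t), y(t)\>$---and crucially relies on the projection being $\stepsize$-Lipschitz, which is what converts dual-space disagreement $\dnorm{\zb(t) - z_j(t)}$ into primal-space error controllable by $L$. This cleanly isolates the network deviation terms, deferring their quantitative control via the mixing of the Markov chain associated with $\stochmat$ to the subsequent sections.
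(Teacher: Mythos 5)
Your proposal is correct and takes essentially the same route as the paper's own proof: the same auxiliary sequences $\bar{z}(t)$ and $y(t)$, the same reduction via double stochasticity to a centralized recursion, the same application of the dual-averaging regret bound (the paper's Lemma~\ref{lemma:ftrl-linear}) to the averaged gradients, and the same conversion of dual disagreement $\dnorm{\bar{z}(t) - z_j(t)}$ into primal error via the $\stepsize$-Lipschitz projection (the paper's Lemma~\ref{lemma:projection}). The only differences are organizational---you fold the transfer from $y(t)$ to $x_i(t)$ (the paper's Lemma~\ref{lemma:transfer-y-to-x}) inline into a per-timestep decomposition---and your stepsize indexing ($\stepsize(t)$ versus $\stepsize(t-1)$ in the projection bound) carries the same harmless off-by-one slippage that the paper's own write-up does.
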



Theorem~\ref{theorem:master-convergence} guarantees that after $T$
steps of the algorithm, every node $i \in V$ has access to a locally
defined quantity $\what{x}_i(T)$ such that the difference
$f(\what{x}_i(T)) - f(x^*)$ is upper bounded by a sum of four terms.
The first two terms in the upper bound~\eqref{eqn:master-convergence}
are optimization error terms that are common to subgradient
algorithms. The third and fourth terms are penalties incurred due to
having different estimates at different nodes in the network, and they
measure the deviation of each node's estimate of the average gradient
from the true average gradient.\footnote{The fact that the term
  $\dnorm{\bar{z}(t) - z_i(t)}$ appears an extra time is no
  significant difficulty, as we will bound the difference $\bar{z}(t)
  - z_i(t)$ uniformly for all $i$ when giving concrete convergence
  results.} Thus, roughly, Theorem~\ref{theorem:master-convergence}
ensures that as long the bound on the deviation $\dnorm{\bar{z}(t) -
  z_i(t)}$ is tight enough, for appropriately chosen $\stepsize(t)$
(say $\stepsize(t) \approx 1/\sqrt{t}$), the error of $\what{x}_i(T)$
is small uniformly across all nodes $i \in V$, and asymptotically
approaches 0. See Theorem~\ref{theorem:simple-convergence} in the next
section for a precise statement of rates.

It is worthwhile comparing the optimization error term from the
bound~\eqref{eqn:master-convergence} to known results. Subgradient
descent on the average function $f = \frac{1}{n} \sum_{i=1}^n f_i$ has
identical convergence rate, as does the randomized version of
incremental subgradient descent~\cite{NedicBe01}. However, the
distributed nature of the algorithm gives a computational advantage
over full gradient descent---the gradient computation requires
$\order(1)$ computation per computer rather than $\order(n)$ on a
single computer. To highlight the benefits compared to incremental
subgradient descent, consider the common problem in machine learning
and statistics of minimizing a loss on a large dataset. A randomized
incremental gradient descent method must access random subsets of data
at every iteration, leading to randomized disk seeks with high
latency, which the distributed algorithm avoids. In addition, we
expect (and empirically see that this is indeed the case) our method
to produce more stable iterates, as we observe the gradients of all
$n$ functions at every round, albeit with a network communication lag.

\subsection{Convergence rates and network topology}

We now turn to investigation of the effects of network topology on
convergence rates.  In this section, we assume that the network topology is
static and that communication occurs via a fixed doubly stochastic
weight matrix $\stochmat$ at every round.\footnote{In later sections, we
weaken these conditions.}  Since $\stochmat$ is
doubly stochastic, it has largest singular value
$\sigma_1(\stochmat) = 1$.  As summarized in the following result, the
convergence rate of the distributed projection algorithm is controlled
by the \emph{spectral gap} $\gamma(\stochmat) \defeq
1-\sigma_2(\stochmat)$ of the matrix $\stochmat$.
\begin{theorem}[Rates based on spectral gap]
  \label{theorem:simple-convergence}
  Under the conditions and notation of
  Theorem~\ref{theorem:master-convergence}, suppose moreover that
  $\prox(x^*) \le R^2$.  With step size choice \mbox{$\stepsize(t)
    = \frac{R\sqrt{1 - \sigma_2(\stochmat)}}{4L \sqrt{t}}$,} we
  have
  \begin{equation*}
    f(\what{x}_i(T)) - f(x^*)
    \leq 8 \frac{R L}{\sqrt{T}} 
    \frac{\log(T \sqrt{n})}{\sqrt{1 - \sigma_2(\stochmat)}} \qquad
    \mbox{for all $i \in \vertex$.}
  \end{equation*}
\end{theorem}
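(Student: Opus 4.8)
The plan is to specialize the master bound of Theorem~\ref{theorem:master-convergence} to the fixed-matrix setting and then to control the two network-deviation terms, $\dnorm{\zb(t) - z_j(t)}$, using the spectral structure of $\stochmat$. Everything hinges on a single estimate: a bound on $\dnorm{\zb(t) - z_i(t)}$, uniform in the node index and in $t$, that decays with the spectral gap $1 - \sigma_2(\stochmat)$. Since the third term of~\eqref{eqn:master-convergence} averages this deviation over $j$ and the fourth evaluates it at the fixed node $i$, a single uniform estimate handles both.

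First I would unroll the dual update~\eqref{eqn:lazy-unproject}. Because the $z_i$ accumulate raw subgradients with graph weights (no stepsize enters this recursion), stacking the iterates as rows of a matrix gives $z_i(t) = \sum_{s=0}^{t-1}\sum_{j=1}^\numnode [\stochmat^{t-1-s}]_{ij}\, g_j(s)$, while double stochasticity of $\stochmat$ (so $\onevec^\top\stochmat = \onevec^\top$) yields $\zb(t) = \ninv\sum_{s=0}^{t-1}\sum_{j=1}^\numnode g_j(s)$. Subtracting, using $\dnorm{g_j(s)}\le L$, and applying the triangle inequality gives
\begin{equation*}
\dnorm{\zb(t) - z_i(t)} \le L \sum_{k=0}^{t-1}\sum_{j=1}^\numnode \left| [\stochmat^k]_{ij} - \ninv \right| ,
\end{equation*}
where the inner sum is exactly the total-variation-type distance between the $k$-step distribution of the random walk driven by $\stochmat$ from $i$ and the uniform stationary distribution.

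The heart of the argument is to bound this distance two ways. The trivial bound is $\sum_j | [\stochmat^k]_{ij} - \ninv | \le 2$, since both rows sum to one. The spectral bound comes from $\ell_1$-$\ell_2$ conversion together with the identity $\stochmat^k - \ninv\onevec\onevec^\top = (\stochmat - \ninv\onevec\onevec^\top)^k$ (valid for doubly stochastic $\stochmat$) and submultiplicativity of the operator norm, which give $\|\stochmat^k - \ninv\onevec\onevec^\top\|_2 \le \sigma_2(\stochmat)^k$ and hence $\sum_j |[\stochmat^k]_{ij} - \ninv| \le \sqrt{\numnode}\,\sigma_2(\stochmat)^k$. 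Splitting the sum over $k$ at a mixing time $\tau \defeq \lceil \log(T\sqrt{\numnode}) / (1 - \sigma_2(\stochmat)) \rceil$---using the trivial bound for $k < \tau$ and the geometric bound for $k \ge \tau$, where $\sigma_2(\stochmat)^\tau \le 1/(T\sqrt{\numnode})$ renders the tail negligible---produces the uniform estimate $\dnorm{\zb(t) - z_i(t)} = \order\!\big(L\log(T\sqrt{\numnode})/(1-\sigma_2(\stochmat))\big)$. This is the step I expect to be the main obstacle: converting the naive $\sqrt{\numnode}$ prefactor into a logarithm, which forces the cutoff $\tau$ to be chosen relative to the horizon $T$ (hence the $\log(T\sqrt{\numnode})$ rather than $\log\numnode$), and which is precisely what ties the rate to the spectral gap instead of to the raw network size.

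Finally I would substitute this deviation bound and the prescribed stepsize $\stepsize(t) = R\sqrt{1 - \sigma_2(\stochmat)}/(4L\sqrt{t})$ back into~\eqref{eqn:master-convergence}. Using $\prox(x^*) \le R^2$, the first term is at most $R^2/(T\stepsize(T)) = 4RL/\sqrt{T(1-\sigma_2(\stochmat))}$; the remaining three terms each reduce to a constant times $\frac{1}{T}\sum_{t=1}^T \stepsize(t)$, controlled by $\sum_{t=1}^T t^{-1/2} \le 2\sqrt{T}$. Collecting, the two optimization penalties contribute $\order(RL/\sqrt{T(1-\sigma_2(\stochmat))})$ and the two network penalties contribute $\order(RL\log(T\sqrt{\numnode})/\sqrt{T(1-\sigma_2(\stochmat))})$. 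The $1/4$ constant in the stepsize is chosen precisely to balance the optimization term against the network terms, so that the combined constant is at most $8$, yielding the claimed bound uniformly over $i \in \vertex$.
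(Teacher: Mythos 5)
Your proposal is correct and follows essentially the same route as the paper's own proof: unroll the dual recursion into powers of $\stochmat$, bound $\dnorm{\bar{z}(t) - z_i(t)}$ by splitting the sum at a mixing-time cutoff of order $\log(T\sqrt{n})/(1-\sigma_2(\stochmat))$ (trivial bound before mixing, the $\sqrt{n}\,\sigma_2(\stochmat)^k$ spectral bound after, exactly the paper's inequality~\eqref{eqn:tv-bound}), and substitute the resulting uniform deviation estimate and the prescribed stepsize into the master bound~\eqref{eqn:master-convergence}. The only cosmetic deviations are that you define the cutoff directly via $1-\sigma_2(\stochmat)$ where the paper uses $\log \sigma_2(\stochmat)^{-1}$ and converts at the end, and you sum the mixed tail as a geometric series where the paper bounds each tail term by $1/T$; neither changes the argument.
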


To the best of our knowledge, this theorem is the first to establish a
tight connection between the convergence rate of distributed
subgradient methods to the spectral properties of the underlying
network.  In particular, the inverse dependence on the spectral gap
$1-\sigma_2(\stochmat)$ is quite natural, since it is well-known to
determine the rates of mixing in random walks on
graphs~\cite{LevinPeWi08}, and the propagation of information in our
algorithm is integrally tied to the random walk on the underlying
graph with transition probabilities specified by $\stochmat$. \\

Using Theorem~\ref{theorem:simple-convergence}, one can derive
explicit convergence rates for several classes of interesting
networks, and Figure~\ref{fig:graph-types} illustrates four different
graph topologies that are of interest.  As a first example, the
$k$-connected cycle in panel (a) is formed by placing $n$ nodes on a
circle and connecting each node to its $k$ neighbors on the right and
left.  For small $k$, the cycle graph is rather poorly connected, and
our analysis will show that this leads to slower convergence rates
than other graphs with better connectivity.  The grid graph in two
dimensions is obtained by connecting nodes to their $k$
nearest neighbors in axis-aligned directions.  For instance, panel (b)
shows an example of a degree $4$ grid graph in two-dimensions.  Both
the cycle and grid topologies are possible models for clustered
computing as well as sensor networks.

In panel (c), we show a random geometric graph, constructed by placing
nodes uniformly at random in $[0, 1]^2$ and connecting any two nodes
separated by a distance less than some radius $r > 0$.  These graphs
are used to model the connectivity patterns of devices, such as
wireless sensor motes, that can communicate with all nodes in some
fixed radius ball, and have been studied extensively
(e.g.,~\cite{GuptaKu00,Penrose03}).  There are natural generalizations
to dimensions $d > 2$ as well as to cases in which the spatial
positions are drawn from a non-uniform distribution.

Finally, panel (d) shows an instance of a bounded degree expander,
which belongs to a special class of sparse graphs that have very good
mixing properties~\cite{Chung98}.  Expanders are an attractive option
for the network topology in distributed computation since they are
known to have large spectral gaps.  For many random graph models, a
typical sample is an expander with high probability; for instance, a
randomly chosen bipartite graph satisfies this property~\cite{Alon86},
as do random degree regular graphs~\cite{FriedmanKaSz89}.  In
addition, there are several deterministic constructions of expanders
that are degree regular (see Section 6.3 of Chung~\cite{Chung98} for
further details).  The deterministic constructions are of interest
because they can be used to design a network, while the random
constructions are of interest since they are often much simpler. \\

In order to state explicit convergence rates, we need to specify a
particular choice of the matrix $\stochmat$ that respects the graph
structure.  Although many such choices are possible, here we focus on
the graph Laplacian~\cite{Chung98}.  First, we let $A \in \real^{n
  \times n}$ be the symmetric adjacency matrix of the undirected graph
$G$, satisfying $A_{ij} = 1$ when $(i,j) \in E$ and $A_{ij} = 0$
otherwise.  For each node $i \in V$, we let $\mydegree_i = |N(i)| =
\sum_{j=1}^n A_{ij}$ denote the degree of node $i$, and we define the
diagonal matrix $D = \diag \{\mydegree_1, \ldots, \mydegree_n\}$. We
assume that the graph is connected, so that $\mydegree_i \geq 1$ for
all $i$, and hence $D$ is invertible.  With this notation, the
\emph{(normalized) graph Laplacian} is given by
\begin{equation*}
\laplacian(\graph) = I - D^{-1/2} A D^{-1/2}.
\end{equation*}
Note that the graph Laplacian $\laplacian = \laplacian(\graph)$ is always
symmetric, positive semidefinite, and satisfies $\laplacian D^{1/2}\onevec =
0$.  Therefore, when the graph is degree-regular ($\mydegree_i = \mydegree$
for all $i \in \vertex$), the standard random walk with self loops on $G$
given by the matrix $\stochmat \defeq I - \frac{\degree}{\degree +
  1}\laplacian$ is doubly stochastic and is valid for our theory.  For
non-degree regular graphs, we need to make a minor modification in order to
obtain a doubly stochastic matrix.

\begin{figure}[t]
  \begin{center}
    \begin{tabular}{cccc}
      \hspace{-.3cm}
      \includegraphics[width=.24\columnwidth]{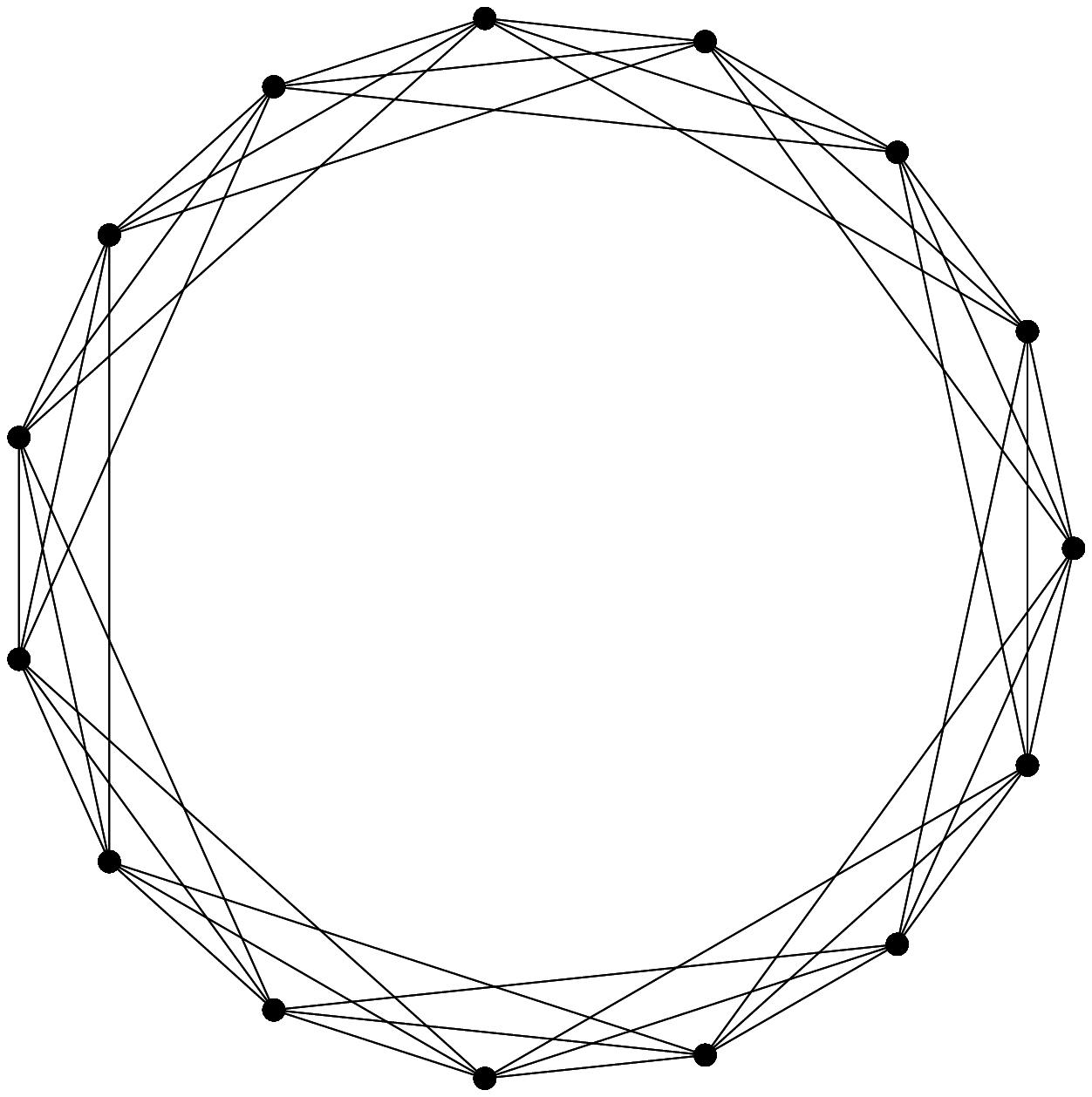} &
      \hspace{-.3cm}
      \includegraphics[width=.24\columnwidth]{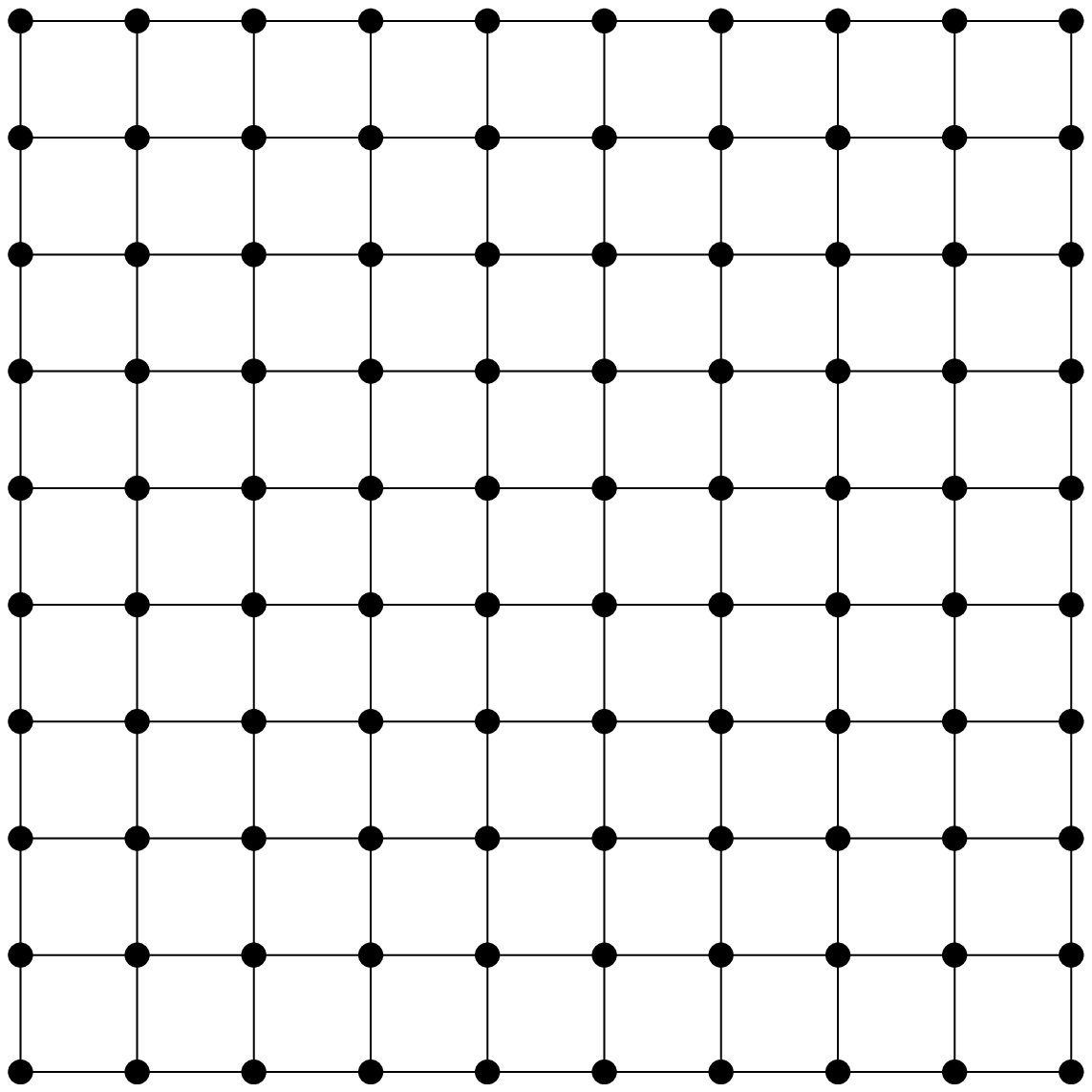} &
      \hspace{-.3cm}
      \includegraphics[width=.24\columnwidth]{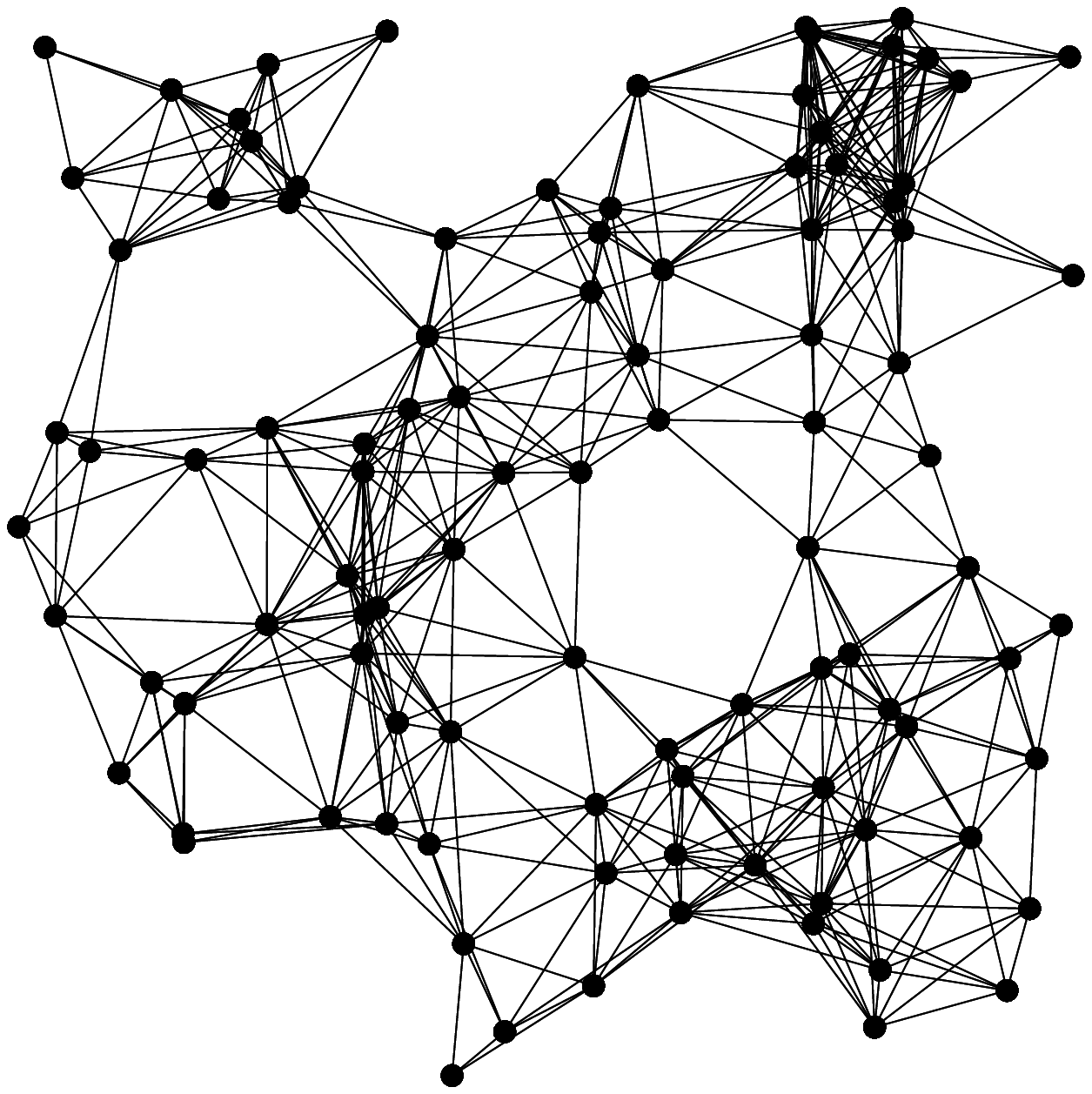} &
      \hspace{-.3cm}
      \includegraphics[width=.24\columnwidth]{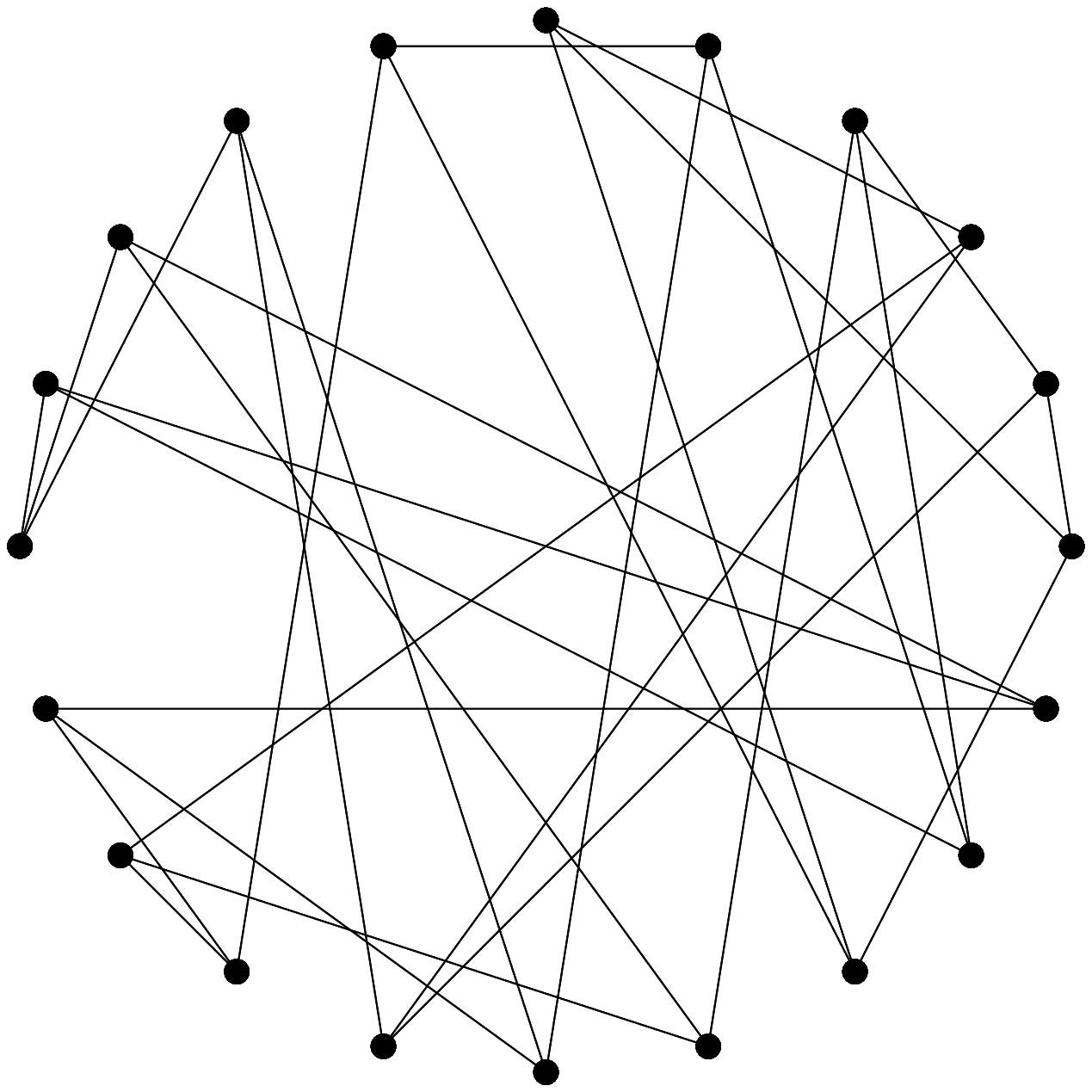} \\
      (a) & (b) & (c) & (d)
    \end{tabular}
    \caption{\label{fig:graph-types} Illustration of some graph
classes of interest in distributed protocols. (a) A $3$-connected
cycle.  (b) Two-dimensional grid with $4$-connectivity, and
non-toroidal boundary conditions. (c) A random geometric graph. (d) A
random 3-regular expander graph.}
  \end{center}
\end{figure}

Letting $\dmax = \max_{i \in \vertex} \mydegree_i$  denote the maximum
degree, we define the modified matrix
\begin{align}
  \label{eqn:mod-stochmat}
  \stochmat_n(\graph)
  & \defeq I - \frac{1}{\dmax+1} \big(D - A \big) \; = \; I -
  \frac{1}{\dmax + 1} D^{1/2} \laplacian D^{1/2}.
\end{align}
This matrix is symmetric by construction, and moreover, $\sum_{j=1}^n
(D_{ij} - A_{ij}) = D_{ii} - \sum_{j=1}^n A_{ij} = 0$ for all $i \in \vertex$,
so it is also doubly stochastic.  Note that if the graph is $\degree$-regular,
then $\stochmat_n(G)$ is the standard choice above.  Modulo a small technical
detail about the ratios of $\dmax$ to $\degree_i$ and the eigenvalue order of
$\stochmat$ (see Sec.~\ref{sec:communication-graph}), plugging
$\stochmat_n(\graph)$ from \eqref{eqn:mod-stochmat} above into
Theorem~\ref{theorem:simple-convergence} immediately relates the convergence
of distributed dual averaging to the spectral properties of the graph
Laplacian, in particular, we have:
\begin{equation}
  \label{eqn:conv-rate-laplacian}
  f(\what{x}_i(T)) - f(x^*) = \order\left(\frac{RL}{\sqrt{T}}
  \frac{\log(Tn)}{\sqrt{\lambda_{n-1}(\laplacian(\graph))}}\right).
\end{equation}
The following result summarizes our conclusions for the choice of stochastic
matrix in \eqref{eqn:mod-stochmat} via \eqref{eqn:conv-rate-laplacian} in
application to different network topologies.

\begin{corollary}
  Under the conditions of Theorem~\ref{theorem:simple-convergence}, we
  have the following convergence rates:
  \begin{enumerate}[(a)]
  \item For $k$-connected paths and cycles,
    \begin{equation*}
    f(\what{x}_i(T)) - f(x^*) = \order\left(\frac{RL}{\sqrt{T}}
    \: \frac{n\log (T n)}{k}\right).
    \end{equation*}
  \item For $k$-connected $\sqrt{n} \times \sqrt{n}$ grids,
    \begin{equation*}
    f(\what{x}_i(T)) - f(x^*) = \order\left(\frac{RL}{\sqrt{T}} \:
    \frac{\sqrt{n} \log(T n)}{k}\right).
    \end{equation*}
  \item For random geometric graphs with connectivity radius $r =
     \Omega(\sqrt{\log^{1+\epsilon} n / n})$ for any $\epsilon > 0$,
    \begin{equation*}
    f(\what{x}_i(T)) - f(x^*) = \order\left(\frac{RL}{\sqrt{T}}
    \: \sqrt{\frac{n}{\log n}}\log (T n)\right)
    \end{equation*}
    with high-probability. 
  \item For expanders with bounded ratio of minimum to maximum node degree,
    \begin{equation*}
    f(\what{x}_i(T)) - f(x^*) = \order\left(\frac{RL}{\sqrt{T}} \: 
    \log(T n)\right).
    \end{equation*}
  \end{enumerate}
  \label{corollary:graph-catalog}
\end{corollary}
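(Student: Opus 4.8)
The plan is to reduce all four cases to a single scalar quantity. By the relation \eqref{eqn:conv-rate-laplacian}, it suffices to produce, for each family of graphs, a lower bound on the smallest nonzero eigenvalue $\lambda_{n-1}(\laplacian(\graph))$ of the normalized Laplacian and then substitute $1/\sqrt{\lambda_{n-1}(\laplacian(\graph))}$ into the rate of Theorem~\ref{theorem:simple-convergence}. The discrepancy between the doubly stochastic matrix $\stochmat_n(\graph)$ of \eqref{eqn:mod-stochmat} and $\laplacian(\graph)$—the factors $\dmax / \degree_i$ relating the combinatorial Laplacian $D - A$ to the normalized one—I would absorb into the comparison deferred to Section~\ref{sec:communication-graph}; for degree-regular graphs it disappears entirely, and for the grid and expander it only costs bounded constants under the stated degree hypotheses.

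Three of the four cases then follow from directly computable or classical spectra. For the $k$-connected cycle in case (a) the Laplacian is circulant, with eigenvalues $1 - \frac{1}{k}\sum_{\ell=1}^k \cos(2\pi j \ell / n)$ for $j = 0, \dots, n-1$; a second-order expansion of the cosine at the lowest nonzero frequency gives $\lambda_{n-1}(\laplacian(\graph)) = \Theta(k^2/n^2)$, hence $1/\sqrt{\lambda_{n-1}(\laplacian(\graph))} = \order(n/k)$, and the path reduces to the cycle by the standard edge-containment comparison. Case (b) exploits the Cartesian-product structure of the $\sqrt{n} \times \sqrt{n}$ grid: up to boundary effects it is the product of two $k$-connected paths of length $\sqrt{n}$, whose Laplacian spectra combine additively, so the smallest nonzero eigenvalue is inherited from a single path factor, giving $\lambda_{n-1}(\laplacian(\graph)) = \Theta(k^2/n)$ and $1/\sqrt{\lambda_{n-1}(\laplacian(\graph))} = \order(\sqrt{n}/k)$. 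Case (d) is immediate, since a bounded-degree expander family satisfies $\lambda_{n-1}(\laplacian(\graph)) = \Omega(1)$ by definition, and the bounded ratio $\dmin/\dmax$ guarantees that \eqref{eqn:mod-stochmat} retains a constant gap, so $1/\sqrt{\lambda_{n-1}(\laplacian(\graph))} = \order(1)$.

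The main obstacle is the random geometric graph of case (c), which admits no closed-form spectrum and for which the conclusion holds only with high probability. Here the plan is a geometric Cheeger argument: partition $[0,1]^2$ into cells of side $\Theta(r)$, so that any two points in adjacent cells lie within distance $r$ and are therefore joined by an edge. Since $r^2 = \Omega(\log^{1+\epsilon} n / n)$, each cell has expected occupancy $\Omega(\log^{1+\epsilon} n)$, and a Chernoff bound together with a union bound over the $\order(n)$ cells shows that, with probability tending to one, every cell contains $\Theta(n r^2)$ points and neighboring cells are completely interconnected—this is precisely where the superlogarithmic slack over the bare connectivity threshold $\sqrt{\log n / n}$ is spent. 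On this event the conductance of the graph is $\Omega(r)$, and Cheeger's inequality $\lambda_{n-1}(\laplacian(\graph)) \ge \Phi^2/2$ yields $\lambda_{n-1}(\laplacian(\graph)) = \Omega(r^2) = \Omega(\log n / n)$, so that $1/\sqrt{\lambda_{n-1}(\laplacian(\graph))} = \order(\sqrt{n/\log n})$ with high probability, as claimed. The delicate steps will be verifying that the conductance lower bound survives the degree normalization inside $\laplacian(\graph)$ and that the growing number of cells does not defeat the union bound, both of which hinge on the superlogarithmic point density that the hypothesis on $r$ supplies.
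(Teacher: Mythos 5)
Your overall skeleton matches the paper's: both reduce each case to a lower bound on $\lambda_{n-1}(\laplacian)$, pass through Lemma~\ref{lemma:stochmat-laplacian} (with the lazy walk $\tfrac{1}{2}(I+\stochmat)$ to sidestep $\lambda_1(\laplacian)$), and then invoke Theorem~\ref{theorem:simple-convergence}; your treatments of the cycle, the grid-as-Cartesian-product, and the expander are essentially the paper's. However, there is a genuine gap in case (a) for \emph{paths}: you claim that ``the path reduces to the cycle by the standard edge-containment comparison,'' but that comparison runs in the wrong direction. Since the $k$-connected path is a subgraph of the $k$-connected cycle, deleting edges can only decrease the Laplacian quadratic form $\sum_{(i,j) \in E}(x_i - x_j)^2$, so the subgraph comparison (this is exactly Theorem 4.13 of Chung~\cite{Chung98}, which the paper invokes for this purpose) yields $\lambda_{n-1}(\laplacian) = \order(k^2/n^2)$ for the path---an \emph{upper} bound on the spectral gap---whereas the corollary requires the \emph{lower} bound $\lambda_{n-1}(\laplacian) = \Omega(k^2/n^2)$. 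The paper supplies the missing lower bound with a dedicated Cheeger-constant computation (Lemma~\ref{proposition:k-connected-path} in Appendix~\ref{sec:k-connected-path}, which shows $h_G = \Omega(k/n)$ and then uses $\lambda_{n-1}(\laplacian) > \tfrac{1}{2} h_G^2$). Note also that this gap propagates into your case (b): the non-toroidal $\sqrt{n}\times\sqrt{n}$ grid of the corollary is the Cartesian product of two $k$-connected \emph{paths}, so its eigenvalue bound inherits whatever you can actually prove for the path factor.

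For case (c) you take a genuinely different route from the paper. The paper cites degree concentration for random geometric graphs together with Theorem 3 of von Luxburg et al.~\cite{LuxburgRaHe10}, which directly gives $\lambda_{n-1}(\laplacian) \geq c_5 r^2$ with high probability; you instead propose a self-contained binning argument (cells of side $\Theta(r)$, Chernoff plus union bound for occupancy, then Cheeger's inequality). This plan is workable and has the merit of being elementary and of making explicit where the superlogarithmic slack in $r$ is spent, but your pivotal assertion that ``on this event the conductance of the graph is $\Omega(r)$'' is not immediate from cell occupancy alone: it requires a discrete isoperimetric argument over \emph{arbitrary} vertex cuts $S$ (classify cells according to whether a majority of their points lie in $S$, handle split cells via the complete bipartite connections inside a cell, and apply grid isoperimetry to the majority cells), not just the statement that adjacent cells are fully interconnected. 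That is precisely the kind of Cheeger computation the paper relegates to its path lemma---and, ironically, carrying out your own cell argument in one dimension is exactly what would close the gap in your case (a).
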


Note that up to logarithmic factors, the optimization term in the
convergence rate is always of 
the order $RL/\sqrt{T}$, while the remaining terms vary depending on
the network topology.  Instead of stating convergence rates, in order
to understand scaling issues as a function of network size and
topology, it can be useful to re-state these results in terms of the
number of iterations $\Tspec{\epsilon}{n}$ required to achieve error
$\epsilon$ for network type $\graph$ with $\numnode$ nodes.  As some
special cases, Corollary~\ref{corollary:graph-catalog} implies the
following scalings:
\begin{itemize}
\item for the $1$-connected single cycle graph, we have
  $T_{\operatorname{cycle}}(\epsilon; n) = \order(\numnode^2/\epsilon^2)$.
\item for the two-dimensional grid, we have
  $T_{\operatorname{grid}}(\epsilon; n) =
  \order(\numnode/\epsilon^2)$, and
\item for a bounded degree expander, we have
  $T_{\operatorname{exp}}(\epsilon; \numnode) = \order(1/\epsilon^2)$.
\end{itemize}
\noindent In general, Theorem~\ref{theorem:simple-convergence} implies
that at most
\begin{equation}
  \label{eqn:general-tspec-upper}
  \Tspec{\epsilon}{n} = \order\Big(\frac{1}{\epsilon^2} \cdot
  \frac{1}{1 - \sigma_2(\stochmat_n(\graph))}\Big)
\end{equation}
iterations are required to achieve an $\epsilon$-accurate solution
when using the matrix $\stochmat_n(G)$ previously
defined in~\eqref{eqn:mod-stochmat}.

It is interesting to ask whether the upper bound
\eqref{eqn:general-tspec-upper} from our analysis is actually a sharp result,
meaning that it cannot be improved (up to constant factors).  On one hand, it
is known that (even for centralized optimization algorithms), any subgradient
method requires at least $\Omega\left(\frac{1}{\epsilon^2}\right)$ iterations
to achieve $\epsilon$-accuracy~\cite{NemirovskiYu83}, so that the
$1/\epsilon^2$ term is unavoidable.
The next proposition addresses the complementary issue, namely
whether the inverse spectral gap term is unavoidable for the dual
averaging algorithm.  For the quadratic proximal function $\psi(x) =
\frac{1}{2} \norm{x}_2^2$, the following result establishes a lower
bound on the number of iterations in terms of graph topology and
network structure:

\begin{proposition}
  \label{proposition:tight-spectral-gap}
  Consider the dual averaging algorithm~\eqref{eqn:lazy-unproject}
  and~\eqref{eqn:lazy-project} with quadratic proximal function and
  communication matrix $\stochmat_\numnode(\graph)$. For any graph $G$
  with $\numnode$ nodes, the number of iterations $\Tspec{c}{\numnode}$
  required to achieve a fixed accuracy $c > 0$ is lower bounded as
  \begin{equation*}
  \Tspec{c}{n} = \Omega \biggr(\frac{1}{1 - 
    \sigma_2(\stochmat_n(\graph))} \biggr).
  \end{equation*}
\end{proposition}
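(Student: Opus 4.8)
The plan is to exhibit, for each graph $\graph$, a worst-case instance of problem~\eqref{eqn:objective} on which the algorithm provably needs $\Omega(1/(1-\sigma_2(\stochmat_\numnode(\graph))))$ iterations, and to analyze its dynamics exactly by diagonalizing the communication matrix. Write $\stochmat = \stochmat_\numnode(\graph)$; since it is symmetric, fix an orthonormal eigenbasis $\{u_k\}_{k=1}^\numnode$ with $\stochmat u_k = \lambda_k u_k$, ordered so that $u_1 = \onevec/\sqrt\numnode$ with $\lambda_1 = 1$ and so that $u_2$ is the eigenvector of the slow mode $\lambda_2 = \sigma_2(\stochmat)$. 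Take $d = 1$ and $\xdomain = [-\radius, \radius]$, and set
\[ f_i(x) = L_1 (u_2)_i\, x + \frac{\kappa}{2} x^2, \qquad L_1 = \frac{L}{2\|u_2\|_\infty}, \quad \kappa = \frac{L}{2\radius}, \]
so each $f_i$ is convex and $L$-Lipschitz on $\xdomain$. Because $\langle u_2, \onevec\rangle = 0$, the linear terms average out and $f(x) = \frac{\kappa}{2}x^2$, whose unique minimizer is the interior point $x^* = 0$ with $\prox(x^*) = 0 \le \radius^2$, as Theorem~\ref{theorem:simple-convergence} requires.

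The second step is to solve the dual recursion in closed form. With the quadratic proximal function and iterates that remain interior, $x_i(t) = -\stepsize(t-1) z_i(t)$, so $g_i(t) = L_1 (u_2)_i + \kappa x_i(t)$ and the vector update $z(t+1) = \stochmat z(t) + g(t)$ reads $z(t+1) = (\stochmat - \kappa\stepsize(t-1) I) z(t) + L_1 u_2$. As $z(0) = 0$ and the only forcing lies along $u_2$, an induction shows $z(t) \in \mathrm{span}(u_2)$; writing $z(t) = \zeta(t) u_2$ gives the scalar recursion $\zeta(t+1) = (\sigma_2 - \kappa\stepsize(t-1))\zeta(t) + L_1$, $\zeta(0) = 0$. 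Hence $\bar z(t) = \zeta(t)\langle u_2, \onevec\rangle/\numnode = 0$: the averaged (centralized) dynamics sit exactly at the optimum, and the entire error is carried by the deviations $z_i(t) - \bar z(t) = \zeta(t)(u_2)_i$. Using the stepsize $\stepsize(t) = \radius\sqrt{1-\sigma_2(\stochmat)}/(4L\sqrt t)$ one checks $\sup_t \stepsize(t-1)\zeta(t) \le \radius L_1/(4L)$, whence $|x_i(t)| \le \radius/8$, retroactively validating the interior assumption.

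The third step lower bounds the suboptimality at the node $i^\star$ with $|(u_2)_{i^\star}| = \|u_2\|_\infty$. There $\what{x}_{i^\star}(T) = -(u_2)_{i^\star} S_T$ with $S_T = \frac1T\sum_{t=1}^T \stepsize(t-1)\zeta(t)$, so that $f(\what{x}_{i^\star}(T)) - f(x^*) = \frac{\kappa}{2}(u_2)_{i^\star}^2 S_T^2$. Dropping the feedback, $\zeta(t) = L_1(1-\sigma_2^t)/(1-\sigma_2)$, which grows linearly, $\zeta(t) \asymp L_1 t$, until the mixing horizon $t \asymp 1/(1-\sigma_2)$ and only then saturates at $L_1/(1-\sigma_2)$; the curvature feedback contributes only a bounded multiplicative factor, since $\sum_t \kappa\stepsize(t-1) = \order(1)$ over that horizon for the stated stepsize, so $\zeta(t) = \Omega(L_1\min\{t, 1/(1-\sigma_2)\})$. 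Propagating this through $S_T$ and substituting the chosen constants (the factor $\|u_2\|_\infty^2$ cancels), one finds on the decreasing branch $f(\what{x}_{i^\star}(T)) - f(x^*) = \Omega(\radius L/((1-\sigma_2)T))$, with a transient peak near $T \asymp 1/(1-\sigma_2)$ of size $\Theta(\radius L)$. Equating the decreasing branch to $c$ shows the error still exceeds $c$ for all $T = \order(\radius L/(c(1-\sigma_2)))$, which gives $\Tspec{c}{\numnode} = \Omega(1/(1-\sigma_2(\stochmat_\numnode(\graph))))$ for every fixed $c$ below the peak.

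The main obstacle is conceptual rather than computational: on this instance the error is non-monotone in $T$---it is tiny at $t=1$ (the iterate starts at the optimum), climbs to a $\Theta(\radius L)$ peak near $T \asymp 1/(1-\sigma_2)$, and then decays---so the argument must read $\Tspec{c}{\numnode}$ as the number of steps needed to \emph{guarantee} accuracy $c$ (equivalently, the last exceedance time), not a first-hitting time. The second technical point is the closed-loop coupling created by the curvature $\kappa$: one must show the accumulated damping $\sum_t \kappa\stepsize(t-1)$ stays $\order(1)$ over the horizon $t \le 1/(1-\sigma_2)$, which is exactly where the scaling $\stepsize(t) \propto \sqrt{1-\sigma_2}/\sqrt t$ enters, so that the clean estimate $\zeta(t) \asymp L_1(1-\sigma_2^t)/(1-\sigma_2)$ holds up to constants. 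Finally, as in the technical discussion of $\stochmat_\numnode(\graph)$ in Section~\ref{sec:communication-graph}, one must ensure the slow mode corresponds to a positive eigenvalue $\sigma_2 = \lambda_2$, so that $\sum_s \sigma_2^s$ genuinely accumulates like $1/(1-\sigma_2)$ rather than being cancelled by sign oscillations of a negative eigenvalue.
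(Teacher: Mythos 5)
Your construction is inventive and the closed-form analysis of the $u_2$-mode dynamics is essentially sound, but it has a genuine gap, and it is exactly the one you flag as ``conceptual'': your instance is simply not hard at small $T$. Because your global objective is $f(x) = \frac{\kappa}{2}x^2$ with interior optimum $x^* = 0$, and the algorithm initializes at $z_i(0) = 0$, the running averages start essentially at the optimum; the error at $T = 1$ is of order $\radius L \,(1 - \sigma_2(\stochmat))$, which lies far below $c$ precisely in the interesting regime of small spectral gap. Under the natural reading of $\Tspec{c}{n}$ --- the first $T$ at which $f(\what{x}_i(T)) - f(x^*) \le c$, which is the reading that matches the standard convention for iteration complexity and is what the paper's own proof delivers --- your instance certifies nothing: its first-hitting time is $1$. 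Reinterpreting $\Tspec{c}{n}$ as a last-exceedance (``guarantee'') time makes your argument go through, but this proves a weaker statement rather than the proposition as the paper establishes it; alternatively, to prove a first-hitting-style bound you would need a separate family of hard instances covering the small-$T$ range where your single instance is easy, and you do not supply one.

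The paper's construction avoids the issue entirely and is considerably simpler. It takes purely \emph{linear} objectives $f_i(x) = (c + w_i)x$ on $\xdomain = [-1,1]$, where $w = v / \linf{v}$ is the normalized second eigenvector re-indexed so that $w_1 = -1$; then $f(x) = cx$ has its optimum at the \emph{boundary} $x^* = -1$, far from the initialization. The gradients are constant, so there is no feedback through the iterates and the dual dynamics are exact: $z_1(t+1) = ct - \sum_{\tau=0}^{t-1}\sigma_2(\stochmat)^\tau$. As long as this quantity is nonpositive --- which holds for every $t \lesssim 1/(1 - \sigma_2(\stochmat))$ once $c \le 1/3$ --- the projected iterate satisfies $x_1(t) \ge 0$, so $f(x_1(t)) - f(x^*) \ge c$ for \emph{all} early iterations simultaneously, and moreover for \emph{any} positive stepsize sequence, since only the sign of $z_1$ matters. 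This yields the lower bound under any reasonable definition of $\Tspec{c}{n}$, with no interiority check, no control of the curvature feedback $\sum_t \kappa \stepsize(t-1)$, and no dependence on the particular choice $\stepsize(t) \propto \sqrt{1 - \sigma_2(\stochmat)}/\sqrt{t}$. If you want to salvage your approach, the fix is precisely this ingredient: replace the quadratic consensus term by a common linear drift so the optimum sits at the boundary of $\xdomain$ away from the starting point; then the slow mixing of the $u_2$-component keeps every early iterate on the wrong side of the domain and the error is bounded below by $c$ from iteration one.
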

\noindent
The proof of this result, given in Section~\ref{sec:lower-bound},
involves constructing a ``hard'' optimization problem and lower
bounding the number of iterations required for our algorithm to solve
it.  In conjunction with Corollary~\ref{corollary:graph-catalog},
Proposition~\ref{proposition:tight-spectral-gap}
implies that our predicted network scaling is sharp.
Indeed, in Section~\ref{sec:simulations}, we show that the theoretical
scalings from Corollary~\ref{corollary:graph-catalog}---namely,
quadratic, linear, and constant in network size $\numnode$---are
well-matched in simulations of our algorithm.

\subsection{Extensions to stochastic communication links}
\label{subsec:stoch-comm}

Our results also extend to the case when the communication matrix
$\stochmat$ is time-varying and random---that is, the matrix
$\stochmat(t)$ is potentially different for each $t$ and randomly
chosen (but it $\stochmat(t)$ still obeys the constraints imposed by $G$).
Such stochastic communication is of interest for a variety of
reasons. If there is an underlying dense network topology, we might
want to avoid communicating along every edge at each round to decrease
communication and network congestion.  For instance, the use of a
gossip protocol~\cite{BoydGhPrSh06}, in which one edge in the network
is randomly chosen to communicate at each iteration, allows for a more
refined trade-off between communication cost and number of iterations.
Communication in real networks also incurs errors due to congestion or
hardware failures, and we can model such errors by a stochastic
process.  

The following theorem provides a convergence result for the case of
time-varying random communication matrices.  In particular, it applies
to sequences $\{x_i(t)\}_{t=0}^\infty$ and $\{z_i(t)\}_{t=0}^\infty$
generated by the dual averaging algorithm with
updates~\eqref{eqn:lazy-unproject} and~\eqref{eqn:lazy-project} with
step size sequence $\{\stepsize(t)\}_{t=0}^\infty$, but in which
$\stochvec_{ij}$ is replaced with $\stochvec_{ij}(t)$.

\begin{theorem}[Stochastic communication]
  \label{theorem:random-P-convergence}
  Let $\{\stochmat(t)\}_{t=0}^\infty$ be an i.i.d.\ sequence of
  doubly stochastic matrices, and define $\lambda_2(\graph)
  \defeq \lambda_2(\E[\stochmat(t)^\top \stochmat(t)])$.
  For any $x^* \in \xdomain$
  and $i \in \vertex$, with probability at least $1 - 1/T$, we have
  \begin{equation*}
    f(\what{x}_i(T)) - f(x^*) \leq \frac{1}{T \stepsize(T)} \prox(x^*) +
    \frac{L^2}{2T} \sum_{t=1}^T \stepsize(t-1) + \frac{3L^2}{T}
    \left(\frac{6 \log(T^2 n)}{1 - \lambda_2(G)} + \frac{1}{T \sqrt{n}}
    + 2 \right) \sum_{t=1}^T \stepsize(t).
  \end{equation*}
\end{theorem}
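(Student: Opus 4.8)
The plan is to invoke Theorem~\ref{theorem:master-convergence} as a black box and then to bound its two network-deviation terms under random communication. The key observation is that the bound~\eqref{eqn:master-convergence} holds \emph{pathwise}: its derivation uses only that each weight matrix is doubly stochastic---so that the average $\bar{z}(t)$ accumulates exactly the running sum of subgradients---and never that the matrices are fixed. Hence it continues to hold verbatim when $\stochvec_{ij}$ is replaced by $\stochvec_{ij}(t)$, for every realization of the i.i.d.\ sequence $\{\stochmat(t)\}$. It therefore suffices to control the random quantities $\dnorm{\bar{z}(t) - z_j(t)}$ and, with high probability over the draw of the matrices, to convert the resulting bound into the stated form.

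First I would unroll the recursion~\eqref{eqn:lazy-unproject}. Stacking the $z_j(t)^\top$ as rows of a matrix $Z(t)$ (with $z_j(0)=0$) and the $g_j(t)^\top$ as rows of $G(t)$, the update reads $Z(t+1) = \stochmat(t)Z(t) + G(t)$, so that $Z(t) = \sum_{s=0}^{t-1} \Psi_s^{(t)} G(s)$ with $\Psi_s^{(t)} \defeq \stochmat(t-1)\cdots\stochmat(s+1)$ (empty product equal to $I$, and $k \defeq t-1-s$ factors). Writing $M \defeq \tfrac1n \onevec\onevec^\top$ and using that each $\stochmat(r)$ is doubly stochastic (so $\onevec^\top \Psi_s^{(t)} = \onevec^\top$ and $\bar{z}(t) = \tfrac1n\sum_{s}\sum_\ell g_\ell(s)$), one gets $z_j(t) - \bar{z}(t) = \sum_{s=0}^{t-1}\sum_\ell \big([\Psi_s^{(t)}]_{j\ell} - \tfrac1n\big) g_\ell(s)$, and since $\dnorm{g_\ell(s)} \le L$,
\[
\dnorm{\bar{z}(t) - z_j(t)} \;\le\; L \sum_{s=0}^{t-1} \big\|e_j^\top(\Psi_s^{(t)} - M)\big\|_1 .
\]
The averaged (third) term then satisfies $\tfrac1n\sum_j \|e_j^\top(\Psi_s^{(t)} - M)\|_1 \le \|\Psi_s^{(t)} - M\|_F$ (entrywise $\ell_1 \le n\|\cdot\|_F$), while for the single node in the fourth term $\|e_i^\top(\Psi_s^{(t)} - M)\|_1 \le \min\{2,\ \sqrt{n}\,\|\Psi_s^{(t)} - M\|_F\}$; both are thus controlled by the single scalar $\|\Psi_s^{(t)} - M\|_F$.

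The technical heart is the estimate $\E\|\Psi_s^{(t)} - M\|_F^2 \le (n-1)\,\lambda_2(\graph)^{\,t-1-s}$, which I would prove by peeling off one factor at a time using independence. Since $(\stochmat-M)(\stochmat'-M) = \stochmat\stochmat'-M$ and $M^2 = M \stochmat = \stochmat M = M$, we have $\Psi_s^{(t)} - M = \prod_{r=s+1}^{t-1}(\stochmat(r)-M)$. Setting $S_k \defeq \E\|\Psi_s^{(t)} - M\|_F^2$ and integrating the innermost factor first, $\E[(\stochmat-M)^\top(\stochmat-M)] = \E[\stochmat^\top\stochmat] - M =: Q - M$; because $Q$ is symmetric, doubly stochastic, and PSD with top eigenvector $\onevec$, one has $0 \preceq Q - M \preceq \lambda_2(\graph)(I-M)$. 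Combining this PSD ordering with the annihilation identity $(I-M)(\stochmat-M) = \stochmat-M$ makes the recursion close, giving $S_k \le \lambda_2(\graph)\, S_{k-1}$ with $S_0 = \mathrm{tr}(I-M) = n-1$.

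Finally I would convert this into the high-probability bound by splitting $\sum_{s=0}^{t-1}$ at a burn-in time $k^* = \Theta\big(\log(T^2 n)/(1-\lambda_2(\graph))\big)$. For recent gradients ($t-1-s < k^*$) I use the crude deterministic bound $\|e_j^\top(\Psi_s^{(t)} - M)\|_1 \le 2$, whose total contribution is $\le 2k^* = O\big(\log(T^2 n)/(1-\lambda_2(\graph))\big)$; this is the dominant $\tfrac{6\log(T^2 n)}{1-\lambda_2(\graph)}$ term. For old gradients ($t-1-s \ge k^*$) the choice of $k^*$ forces $\lambda_2(\graph)^{\,t-1-s} \le (T^2 n)^{-3}$, so the displayed expectation estimate makes $\E\|\Psi_s^{(t)} - M\|_F$ tiny; a Markov inequality together with a union bound over the $\le T^2$ pairs $(s,t)$ shows that, with probability at least $1 - 1/T$, the combined old-term contribution to both network terms is at most the lower-order $O(1/(T\sqrt{n}))$ and additive-constant corrections. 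Folding in the common factor $\tfrac1T\sum_t \stepsize(t)$ then yields the claimed inequality. I expect the main obstacle to be this last piece of probabilistic bookkeeping: calibrating $k^*$ so that the deterministic burn-in is \emph{exactly} $O(\log(T^2 n)/(1-\lambda_2(\graph)))$ while the Markov/union-bound tail over the old terms simultaneously achieves failure probability $1/T$ and contributes only the stated lower-order terms; the peeling lemma, by contrast, is delicate but self-contained, the one subtlety being that it is $\E[\stochmat^\top\stochmat]$ (not a pathwise operator-norm contraction) that governs the decay.
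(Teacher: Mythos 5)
Your architecture is the same as the paper's---the master bound is applied pathwise, $\bar{z}(t)-z_j(t)$ is unrolled through products of the random matrices, mixing is quantified by $\lambda_2(\E[\stochmat(t)^\top\stochmat(t)])$ via a second-moment contraction, and the sum over $s$ is split at a burn-in $k^* \approx 3\log(T^2n)/\log \lambda_2^{-1}$ whose $2k^*$ contribution yields the $6\log(T^2n)/(1-\lambda_2)$ term. Your peeling lemma is correct: the identities $(\stochmat-M)(\stochmat'-M)=\stochmat\stochmat'-M$, $\E[(\stochmat-M)^\top(\stochmat-M)]=\E[\stochmat^\top\stochmat]-M\preceq \lambda_2 (I-M)$, and $M\prod_r(\stochmat(r)-M)=0$ do close the recursion $S_k\le \lambda_2 S_{k-1}$, $S_0=n-1$; it is a Frobenius-norm form of the paper's argument, which instead runs the contraction on vectors $u(t+1)=\stochmat(t)u(t)$ and applies Chebyshev column by column. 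The genuine gap is exactly the step you flagged as the ``main obstacle,'' and it is not mere bookkeeping: a Markov inequality per pair $(s,t)$ plus a union bound over the $\le T^2$ old pairs cannot deliver the stated theorem. To get total failure probability $1/T$ from $\sim T^2$ events, each must fail with probability $\lesssim T^{-3}$, so Markov forces per-pair thresholds $\epsilon_k \gtrsim \sqrt{T^3 n\lambda_2^k}$; summing the geometric tail, the old terms then contribute at least
\begin{equation*}
\sqrt{n}\sum_{k\ge k^*}\epsilon_k \;\gtrsim\; n\,T^{3/2}\,\frac{\lambda_2^{k^*/2}}{1-\sqrt{\lambda_2}}
\;=\; \frac{1}{T^{3/2}\sqrt{n}\,\bigl(1-\sqrt{\lambda_2}\bigr)},
\end{equation*}
which exceeds the stated $1/(T\sqrt{n})$ whenever $1-\sqrt{\lambda_2}\le T^{-1/2}$ (e.g.\ a single cycle with $n\gtrsim T^{1/4}$). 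Reallocating the failure probabilities non-uniformly across pairs only changes the exponent of the offending $(1-\lambda_2)^{-\Theta(1)}$ factor, and enlarging $k^*$ to compensate adds $\Theta\bigl(\log\frac{1}{1-\lambda_2}\,/\,(1-\lambda_2)\bigr)$ to the leading term, i.e.\ changes the theorem.

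The missing idea is that no union over $s$ is needed at all, because the deviations are \emph{pathwise nested} in $s$: for $s'<s$, writing $M=\onevec\onevec^\top/n$, one has $\Phi(t-1,s')-M=\Phi(s-1,s')\bigl(\Phi(t-1,s)-M\bigr)$ and $\matrixnorm{\Phi(s-1,s')}_2\le 1$ (it is doubly stochastic), so $\ltwo{\Phi(t-1,s')e_i-\onevec/n}\le \ltwo{\Phi(t-1,s)e_i-\onevec/n}$. Hence every old term is dominated by the single term at the cutoff $s=t-1-k^*$, and one needs only one Chebyshev event per pair $(i,t)$ at threshold $1/(T^2n)$: each fails with probability at most $(T^2n)^2\lambda_2^{k^*}\le 1/(T^2n)$, the union over the $nT$ pairs costs $1/T$, and on the good event the $\le T$ old terms contribute $\sqrt{n}\cdot T\cdot (T^2n)^{-1}=1/(T\sqrt{n})$, exactly the stated lower-order term. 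This is precisely how the paper's proof proceeds. Your Frobenius formulation actually makes the fix cleaner---the same nesting holds verbatim for $\|\Psi_s^{(t)}-M\|_F$, so a single event per $t$ (rather than per $(i,t)$) suffices, with failure probability $(T^2n)^2\cdot n\lambda_2^{k^*}\le 1/T^2$ per $t$---but the nesting observation must be added; with it, the rest of your outline goes through and matches the theorem's constants.
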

We provide a proof of the theorem in Section~\ref{sec:random-P-convergence}.
Note that the upper bound from the theorem is valid for any sequence of
non-increasing positive stepsizes $\{\alpha(t)\}_{t=0}^\infty$.  The bound
consists of three terms, with the first growing and the last two shrinking as
the stepsize choice is reduced.  If we assume that $\prox(x^*) \le \radius^2$,
then we can optimize the tradeoff between these competing terms, and we find
that the stepsize sequence $\stepsize(t) \propto
\frac{\radius\sqrt{1 - \lambda_2}}{L \sqrt{t}}$ approximately minimizes the
bound bound in the theorem. This yields the scaling
\begin{equation}
  f(\what{x}_i(T)) - f(x^*) \leq c \, \frac{\radius L}{\sqrt{T}} \cdot
  \frac{\log(Tn)}{\sqrt{1 - \lambda_2(\E[\stochmat(t)^\top\stochmat(t)])}},
  \label{eqn:simplified-stochmat-convergence}
\end{equation}
for a universal constant $c$.  We can also boost the probability with
which this result holds to $1 - 1/T^k$ for any $k>1$---without
modifying the algorithm---at the cost of
incurring a slightly higher constant penalty in the error bound.

The setting of stochastic communication for distributed optimization
was previously considered by Lobel and Ozdaglar~\cite{LobelOz09}.
They established convergence by assuming lower bounds on the entries
of $\stochmat$ whenever two nodes communicated. As a consequence,
their bounds grew exponentially in the number of nodes $n$ in the
network.\footnote{More precisely, inspection of the constant $C$ in
equation~(37) of their paper shows that it is of order $\gamma^{-2(n -
1)}$, where $\gamma$ is the lower bound on non-zero entries of
$\stochmat$, so it is at least $4^{n-1}$.} In contrast, the rates
given here for stochastic communication are directly comparable to the
convergence rates in the previous section for fixed transition
matrices. More specifically, we have inverse dependence on the
spectral gap of the expected network, and consequently polynomial scaling for
any network, as well as faster rates dependent on network structure.

\subsection{Results for stochastic gradient algorithms}

Finally, none of our convergence results rely on the gradients being
correct. Specifically, we can straightforwardly extend our results to the case
of noisy gradients corrupted with zero-mean bounded-variance noise. This
setting is especially relevant in situations such as distributed learning or
wireless sensor networks, when data observed is noisy. Let $\mc{F}_t$ be the
$\sigma$-field containing all information up to time $t$, that is,
$g_i(1),\ldots,g_i(t) \in \mc{F}_t$ and $x_i(1), \ldots, x_i(t+1) \in
\mc{F}_t$ for all $i$. We define a stochastic oracle that provides gradient
estimates satisfying
\begin{equation}
  \E\left[\what{g}_i(t)\mid\mc{F}_{t-1}\right] \in \partial
  f_i(x_i(t))
  ~~~ {\rm and} ~~~
  \E\left[\dnorm{\what{g}_i(t)}^2\mid\mc{F}_{t-1}\right]
  \leq L^2.
  \label{eqn:stochoracle}
\end{equation}
As a special case, this model includes an additive noise oracle that
takes an element of the subgradient $\partial f_i(x_i(t))$ and adds to
it bounded variance zero-mean noise. Theorem~\ref{theorem:stochastic-gradient}
gives our result in the case of stochastic gradients. We give the
proof and further discussion in Section~\ref{sec:stoch-gradient},
noting that because we adapt the dual averaging algorithm, the
analysis follows quite cleanly from the earlier analysis for the
previous three theorems.
\begin{theorem}[Stochastic gradient updates]
  \label{theorem:stochastic-gradient}
  Let the sequence $\{x_i(t)\}_{t=1}^\infty$ be as in
  Theorem~\ref{theorem:master-convergence}, except that at each round
  of the algorithm agent $i$ receives a vector $\what{g}_i(t)$ from an
  oracle satisfying condition~\eqref{eqn:stochoracle}. For each $i \in
  V$, we have
  \begin{equation*}
    \E \Big[f(\what{x_i}(T))\Big] - f(x^*) \le \frac{1}{T\stepsize(T)}
    \prox(x^*) + \frac{8 L^2}{T} \sum_{t=1}^T
    \stepsize(t-1) + \frac{3 L^2}{T} \frac{\log (T
      \sqrt{n})}{ 1 - \sigma_2(\stochmat)}
    \sum_{t=1}^T \stepsize(t).
  \end{equation*}
  If we assume in addition that $\xdomain$ has finite radius $\radius \defeq
  \sup_{x \in \xdomain} \norm{x - x^*}$ and that $\dnorm{\what{g}_i(t)}
  \le L$, then with probability at least $1 - \delta$,
  \begin{equation*}
    f(\what{x}_i(T)) - f(x^*) \leq \frac{1}{T\stepsize(T)} \prox(x^*) +
    \frac{8 L^2}{T} \sum_{t=1}^T \stepsize(t-1)
    + \frac{3L^2}{T} \frac{\log (T \sqrt{n})}{1 -
      \sigma_2(\stochmat)} \sum_{t=1}^T\stepsize(t)
    + 8 L \radius \sqrt{\frac{\log \frac{1}{\delta}}{T}}.
  \end{equation*}
  If we further assume that the gradient estimates $\what{g}_i(t)$ are
  uncorrelated given $\mc{F}_{t-1}$, then with probability at least $1 -
  \delta$,
  \begin{equation*}
    f(\what{x}_i(T)) - f(x^*) \le \frac{1}{T \stepsize(T)} \prox(x^*)
    + \frac{8L^2}{T} \sum_{t=1}^T \stepsize(t - 1)
    + \frac{3 L ^2}{T} \frac{\log (T \sqrt{n})}{1 - \sigma_2(\stochmat)}
    \sum_{t=1}^T \stepsize(t)
    + \frac{3 L\radius \log \frac{1}{\delta}}{T}
    + 4 L\radius  \sqrt{\frac{\log \frac{1}{\delta}}{nT}}.
  \end{equation*}
\end{theorem}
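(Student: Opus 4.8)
The plan is to reduce the stochastic-gradient analysis to the deterministic master bound of Theorem~\ref{theorem:master-convergence} by isolating the single place where exactness of the subgradients is used. Write $g_j(t)\defeq\E[\what{g}_j(t)\mid\mc{F}_{t-1}]\in\partial f_j(x_j(t))$ for the true subgradient and $\xi_j(t)\defeq\what{g}_j(t)-g_j(t)$ for the noise, so $\E[\xi_j(t)\mid\mc{F}_{t-1}]=0$. The key observation is that the dual-averaging recursion and every estimate in the proofs of Theorems~\ref{theorem:master-convergence} and~\ref{theorem:simple-convergence}---Nesterov's regret bound and the spectral-gap control of the deviations $\dnorm{\zb(t)-z_i(t)}$---depend only on the update $z_i(t+1)=\sum_j\stochvec_{ij}z_j(t)+\what{g}_i(t)$ and on norm bounds for the averaged vectors; they never use that those vectors are genuine subgradients. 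Hence they apply verbatim to the realized noisy gradients, and the subgradient property enters only through the convexity step $f_j(x_j(t))-f_j(x^*)\le\<g_j(t),\,x_j(t)-x^*\>$. Substituting $g_j(t)=\what{g}_j(t)-\xi_j(t)$ into that one step and letting $B_i(T)$ denote the right-hand side produced by running the argument of Theorem~\ref{theorem:master-convergence} on the realized $\what{g}$, I obtain for each node $i$
\begin{equation*}
  f(\what{x}_i(T))-f(x^*)\le B_i(T)-\frac{1}{nT}\sum_{t=1}^T\sum_{j=1}^n\<\xi_j(t),\,x_j(t)-x^*\>.
\end{equation*}
Since $x_j(t)-x^*\in\mc{F}_{t-1}$ and $\E[\xi_j(t)\mid\mc{F}_{t-1}]=0$, the increments $D_t\defeq\frac1n\sum_j\<\xi_j(t),\,x_j(t)-x^*\>$ form a martingale-difference sequence for the filtration $\{\mc{F}_{t-1}\}$.

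For the expectation bound I would simply take expectations: the martingale term vanishes, while $B_i(T)$ is controlled in expectation through $\E[\dnorm{\what{g}_j(t)}^2\mid\mc{F}_{t-1}]\le L^2$ and $\E[\dnorm{\what{g}_j(t)}]\le L$ (Jensen), which reproduce the optimization and network coefficients of Theorem~\ref{theorem:simple-convergence} up to constants. For the two high-probability statements I instead invoke the almost-sure hypothesis $\dnorm{\what{g}_i(t)}\le L$, which renders $B_i(T)$ deterministic and identical across all three displays, so only the martingale $-\sum_t D_t$ requires concentration. Under the radius assumption $\norm{x_j(t)-x^*}\le\radius$ one has $|D_t|\le 2L\radius$, and Azuma--Hoeffding yields $-\frac1T\sum_t D_t=\order\!\big(L\radius\sqrt{\log(1/\delta)/T}\big)$ with probability at least $1-\delta$, matching the extra $8L\radius\sqrt{\log(1/\delta)/T}$ term of the second bound.

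The sharper third bound is where the uncorrelatedness hypothesis is used, and this is the main point of the argument. When the $\what{g}_j(t)$ are conditionally uncorrelated across $j$, the cross terms in $\Var(D_t\mid\mc{F}_{t-1})$ vanish and the conditional variance collapses to $\frac1{n^2}\sum_j\E[\dnorm{\xi_j(t)}^2\mid\mc{F}_{t-1}]\norm{x_j(t)-x^*}^2=\order(L^2\radius^2/n)$, so the predictable quadratic variation satisfies $\sum_t\Var(D_t\mid\mc{F}_{t-1})=\order(L^2\radius^2 T/n)$. Feeding this quantity, together with the uniform increment bound $|D_t|\le 2L\radius$, into Freedman's Bernstein-type martingale inequality produces a two-regime tail whose dominant contribution scales as $L\radius\sqrt{\log(1/\delta)/(nT)}$ and whose lower-order contribution scales as $L\radius\log(1/\delta)/T$---precisely the final two terms of the third display, and exactly the $1/\sqrt n$ variance reduction expected from averaging independent noise over the $n$ nodes.

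The main obstacle I anticipate is bookkeeping rather than conceptual. One must verify cleanly that the deterministic portions of the proofs of Theorems~\ref{theorem:master-convergence} and~\ref{theorem:simple-convergence} are genuinely agnostic to whether the averaged vectors are subgradients, so that they may legitimately be invoked on the realized $\what{g}$; and one must confirm that the iterate $x_j(t)$ entering $D_t$ is $\mc{F}_{t-1}$-measurable so that $D_t$ is a true martingale difference, which is guaranteed by the timing convention $x_j(t)\in\mc{F}_{t-1}$ built into the oracle model~\eqref{eqn:stochoracle}. Tracking the exact numerical constants (for instance the factor $8$ in the optimization term and the relation $\dnorm{\xi_j(t)}\le 2L$) then requires care but is routine.
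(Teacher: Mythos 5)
Your proposal is correct and follows essentially the same route as the paper's proof: the same decomposition isolating the noise term $\langle g_j(t)-\what{g}_j(t),\,x_j(t)-x^*\rangle$ (your $-\xi_j(t)$), the same observation that Lemma~\ref{lemma:ftrl-linear} and the spectral-gap deviation bounds never use the subgradient property of the averaged vectors, the same conditioning/martingale argument for the expectation bound, and the same Azuma and Freedman applications (with the identical collapse of the conditional variance to $\order(L^2\radius^2/n)$ under uncorrelatedness) for the two high-probability statements.
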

\noindent
As with the case of stochastic communication covered by
Theorem~\ref{theorem:random-P-convergence}, it should be clear that by
choosing the stepsize $\stepsize(t) \propto \frac{\radius \sqrt{1 -
    \sigma_2(\stochmat)}}{L \sqrt{t}}$, we have essentially the same
optimization error guarantee as the bound
\eqref{eqn:simplified-stochmat-convergence}, but with
$\lambda_2(\E[\stochmat(t)^\top\stochmat(t)])$ replaced by
$\sigma_2(\stochmat)$.

\section{Related Work}
\label{sec:related-work}

Having stated and discussed our main results in the previous section, we can
now more explicitly compare the results in this paper to those in previous
work. Our aim here is to give a clear understanding of how our algorithm and
results relate to and, in many cases, improve upon prior results.
Specifically, with the results of Theorem~\ref{theorem:simple-convergence} and
Corollary~\ref{corollary:graph-catalog} in hand, we can more directly compare
our results to other work.

As discussed in the introduction, other researchers have designed algorithms
for solving the problem~\eqref{eqn:objective}. Most previous
work~\cite{LobelOz09,NedicOz09,NedicOlOzTs09,RamNeVe10} studies convergence of
a (projected) gradient method in which each node $i$ in the network maintains
$x_i(t) \in \xdomain$, and at time $t$ performs the update
\begin{equation}
  x_i(t + 1) = \argmin_{x \in \xdomain} \Big\{\half \ltwo{ \sum_{j \in
      \neighbor(i)} \stochmat_{ji} x_j(t) - \stepsize g_i(t)}^2 \Big\} 
  \label{eqn:nedic-alg}
\end{equation}
for $g_i(t) \in \partial f_i(x_i(t))$. With the update~\eqref{eqn:nedic-alg},
Corollary~5.5 in the paper~\cite{RamNeVe10} shows that
\begin{equation*}
  f(\what{x}_i(T)) - f(x^*) =
  \order\left(\frac{\stepsize n^3 \radius^2}{T} + \stepsize L^2 \right)
\end{equation*}
(we use our notation and assumptions from
Theorem~\ref{theorem:simple-convergence}).  The above bound is minimized by
setting the stepsize $\stepsize \propto \frac{L}{n^{3/2} \radius \sqrt{T}}$,
giving convergence rate $\order(n^{3/2} \radius L / \sqrt{T})$.  It is clear
that this convergence rate is substantially slower than all the rates in
Corollary~\ref{corollary:graph-catalog}.

The distributed dual averaging
algorithm~\eqref{eqn:lazy-unproject}--\eqref{eqn:lazy-project} is quite
different from the update~\eqref{eqn:nedic-alg}. The use of the proximal
function $\prox$ allows us to address problems with non-Euclidean geometry,
which is useful, for example, for very high-dimensional problems or where the
domain $\xdomain$ is the simplex~(e.g.~\cite[Chapter 3]{NemirovskiYu83}). The
differences between the algorithms become more pronounced in the
analysis. Since we use dual averaging, we can avoid some technical
difficulties introduced by the projection step in the
update~\eqref{eqn:nedic-alg}. Precisely because of this technical issue,
earlier works~\cite{NedicOz09,LobelOz09} studied unconstrained optimization,
and the averaging in $z_i(t)$ seems essential to the faster rates our approach
achieves as well as the ease with which we can extend our results to
stochastic settings.

In other related work, Johansson et al.~\cite{JohanssonRaJo09}
establish network-dependent rates for Markov incremental gradient
descent (MIGD), which maintains a single vector $x(t)$ at all times. A
token $i(t)$ determines an active node at time $t$, and at time step
$t + 1$ the token moves to one of its neighbors $j \in
\neighbor(i(t))$, each with probability $\stochmat_{j i(t)}$. Letting
$g_{i(t)}(t) \in \partial f_{i(t)}(x(t))$, the update is
\begin{equation}
  \label{eqn:migd}
  x(t + 1) = \argmin_{x \in \xdomain} \Big\{\half \ltwo{x(t) -
    \stepsize g_{i(t)}(t)}^2 \Big\}.
\end{equation}
Johansson et al.\ show that with optimal setting of $\stepsize$ and symmetric
transition matrix $\stochmat$, MIGD has convergence rate $\order(\radius L
\max_i \sqrt{\frac{n \Gamma_{ii}}{T}})$, where $\Gamma$ is the return time
matrix \mbox{$\Gamma = (I - \stochmat + \onevec \onevec^\top / n)^{-1}$}. In
this case, let $\lambda_i(\stochmat) \in [-1, 1]$ denote the $i$th eigenvalue
of $\stochmat$. The eigenvalues of $\Gamma$ are thus $1$ and $1 / (1 -
\lambda_i(\stochmat))$ for $i > 1$, and so we have
\begin{align*}
  n \max_{i=1, \ldots, n} \Gamma_{ii}
  \ge \tr(\Gamma) = 1 +
  \sum_{i=2}^n\frac{1}{1 - \lambda_i(\stochmat)}
  > \max\left\{\frac{1}{1 - \lambda_2(\stochmat)}, \frac{1}{1 -
    \lambda_n(\stochmat)}\right\} = \frac{1}{1 - \sigma_2(\stochmat)}.
\end{align*}
Consequently, the bound in Theorem~\ref{theorem:simple-convergence} is never
weaker, and for certain graphs, our results are substantially tighter, as
shown in Corollary~\ref{corollary:graph-catalog}.  For $d$-dimensional grids
(where $d \ge 2$) we have \mbox{$T(\epsilon; \numnode) = \order(n^{2/d} /
  \epsilon^2)$,} whereas MIGD scales as $T(\epsilon; \numnode) =
\order(n/\epsilon^2)$. For well-connected graphs, such as expanders and the
complete graph, the MIGD algorithm scales as $T(\epsilon; \numnode) = \order(n
/ \epsilon^2)$, essentially a factor of $n$ worse than our results.

\section{Basic convergence analysis for distributed dual averaging}
\label{sec:master-convergence}

In this section, we prove convergence of the distributed algorithm
based on the updates~\eqref{eqn:lazy-unproject}
and~\eqref{eqn:lazy-project}.  We begin in Section~\ref{SecSetupFour}
by defining some auxiliary quantities and establishing lemmas useful
in the proof, and we prove Theorem~\ref{theorem:master-convergence}
in Section~\ref{SecProofMaster}.

\subsection{Setting up the analysis}
\label{SecSetupFour}

Using techniques related to those used in past work~\cite{NedicOz09},
we establish convergence via two auxiliary sequences, given by
\begin{equation}
  \bar{z}(t) \defeq \frac{1}{n} \sum_{i=1}^n z_i(t) ~~~ {\rm and} ~~~
  y(t) \defeq \Pi_\xdomain^\prox(-\bar{z}(t),\stepsize).
  \label{eqn:ydefn}
\end{equation}
We begin by showing that the average sum of gradients $\bar{z}(t)$
evolves in a very simple way.  In particular, we have
\begin{align*}
  \bar{z}(t + 1) & = \frac{1}{n} \sum_{i=1}^n \sum_{j=1}^n
  \big(\stochvec_{ij} z_j(t) + g_i(t) \big)
\end{align*}
Consider the right-hand side above, let $Z(t) = [z_1(t) ~
\cdots ~ z_n(t)]$ be the matrix of vectors $z_i$, and denote
$\stochmat = [\stochvec_1 ~ \cdots ~ \stochvec_n]$. Since
the matrix $\stochmat$ is doubly stochastic, we have
\begin{equation*}
\frac{1}{n} \sum_{i=1}^n \sum_{j=1}^n \stochvec_{ij} z_j(t) \; = \;
\frac{1}{n} Z(t) \stochmat \onevec \; = \; \frac{1}{n} Z(t) \onevec
\;= \; \bar{z}(t),
\end{equation*}
which yields the evolution
\begin{equation}
  \label{eqn:mean-z-update}
  \bar{z}(t + 1) = \bar{z}(t) + \ninv \sum_{j=1}^n g_j(t).
\end{equation}
Consequently, the (negative of the) averaged dual sequence
$\{\zb(t)\}_{t=0}^\infty$ evolves almost like standard subgradient descent on
the function $f(x) = \sum_{i=1}^n f_i(x)/n$, the only difference being
$g_i(t)$ is a subgradient at $x_i(t)$ (which need not be the same as the
subgradient $g_j(t)$ at $x_j(t)$).  The simple
evolution~\eqref{eqn:mean-z-update} of the averaged dual sequence allows us to
avoid difficulties with the non-linearity of projection that have been
challenging in earlier work.

Before proceeding with the proof of
Theorem~\ref{theorem:master-convergence}, we state two useful results
regarding the convergence of the standard dual averaging algorithm,
though we defer their proofs to Appendix~\ref{app:lazy}.  We begin by
giving a convergence guarantee for the single-objective form of the
dual averaging algorithm. Let $\{g(t)\}_{t=1}^\infty \subset \R^d$ be
an arbitrary sequence of vectors, and consider the sequence
$\{x(t)\}_{t=1}^\infty$ defined by
\begin{equation}
  \label{eqn:ftrl-linear}
  x(t + 1) \; \defeq \; \argmin_{x \in \xdomain} \bigg\{ \sum_{s=1}^t
  \<g(s), x\> + \frac{1}{\stepsize(t)} \prox(x) \bigg\} =
  \Pi_\xdomain^\prox \bigg(\sum_{s=1}^t g(s), \stepsize(t) \bigg).
\end{equation}
\begin{lemma}
  \label{lemma:ftrl-linear}
  For any non-increasing sequence $\{\stepsize(t)\}_{t=0}^\infty$ of
  positive stepsizes, and for any $x^* \in \xdomain$, we have
  \begin{equation*}
  \sum_{t=1}^T \<g(t), x(t) - x^*\> \leq
  \half \sum_{t=1}^T \stepsize(t - 1) \dnorm{g(t)}^2
  + \frac{1}{\stepsize(T)} \prox(x^*).
  \end{equation*}
\end{lemma}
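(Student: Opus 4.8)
The plan is to establish this regret-type bound for the single-objective recursion~\eqref{eqn:ftrl-linear} through the standard duality between strong convexity of $\prox$ and smoothness of its conjugate, using a telescoping potential argument. I would introduce the value function
\begin{equation*}
V(z, \stepsize) \defeq \max_{x \in \xdomain}\Bigl\{\<z, x\> - \frac{1}{\stepsize}\prox(x)\Bigr\},
\end{equation*}
defined for $z \in \R^d$ and $\stepsize > 0$, and observe that the iterate~\eqref{eqn:ftrl-linear} can be written as $x(t+1) = \nabla_z V(-z(t), \stepsize(t))$, where $z(t) \defeq \sum_{s=1}^t g(s)$, since minimizing $\<z(t), x\> + \frac{1}{\stepsize(t)}\prox(x)$ over $\xdomain$ is the same as maximizing $\<-z(t), x\> - \frac{1}{\stepsize(t)}\prox(x)$.

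Two properties of $V$ drive the argument. First, because $\frac{1}{\stepsize}\prox$ is $\frac{1}{\stepsize}$-strongly convex with respect to $\norm{\cdot}$, the conjugate $V(\cdot, \stepsize)$ is differentiable with $\stepsize$-Lipschitz gradient relative to $\dnorm{\cdot}$, yielding the descent-type inequality
\begin{equation*}
V(z + \delta, \stepsize) \le V(z, \stepsize) + \<\nabla_z V(z, \stepsize), \delta\> + \frac{\stepsize}{2}\dnorm{\delta}^2.
\end{equation*}
Second, since $\prox \ge 0$ on $\xdomain$, the map $\stepsize \mapsto V(z, \stepsize)$ is nondecreasing (decreasing $\stepsize$ subtracts a larger multiple of the nonnegative $\prox$), so the fact that $\{\stepsize(t)\}$ is non-increasing gives $V(z, \stepsize(t)) \le V(z, \stepsize(t-1))$ for every $z$.

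With these in hand, I would apply the descent inequality at $z = -z(t-1)$ with increment $\delta = -g(t)$ and parameter $\stepsize(t-1)$, using the identity $\nabla_z V(-z(t-1), \stepsize(t-1)) = x(t)$, to obtain $V(-z(t), \stepsize(t-1)) \le V(-z(t-1), \stepsize(t-1)) - \<g(t), x(t)\> + \frac{\stepsize(t-1)}{2}\dnorm{g(t)}^2$. The monotonicity property then bounds the left side below by $V(-z(t), \stepsize(t))$, so writing $W(t) \defeq V(-z(t), \stepsize(t))$ produces the one-step recursion $W(t) \le W(t-1) - \<g(t), x(t)\> + \frac{\stepsize(t-1)}{2}\dnorm{g(t)}^2$. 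Summing over $t = 1, \ldots, T$ telescopes the potential; I would evaluate $W(0) = V(0, \stepsize(0)) = -\frac{1}{\stepsize(0)} \min_{x \in \xdomain}\prox(x) = 0$ using $\prox(0) = 0$, $\prox \ge 0$ and $0 \in \xdomain$, and lower bound $W(T) \ge \<-z(T), x^*\> - \frac{1}{\stepsize(T)}\prox(x^*)$ by substituting $x^*$ into the maximum. Rearranging, together with $\<z(T), x^*\> = \sum_{t=1}^T \<g(t), x^*\>$, gives exactly the claimed inequality.

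The one genuinely delicate point is the bookkeeping for the time-varying stepsize. The descent inequality returns the correct iterate $x(t)$ only when applied with parameter $\stepsize(t-1)$, which leaves a mismatched parameter $\stepsize(t-1)$ on its left-hand side rather than the $\stepsize(t)$ needed for the potential to telescope; it is precisely the monotonicity of $V$ in $\stepsize$—resting on $\prox \ge 0$—that reconciles the two indices and closes the recursion. The remaining ingredients are routine convex duality, so I expect no further obstacles.
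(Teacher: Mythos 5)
Your proposal is correct and takes essentially the same route as the paper's proof in Appendix~\ref{app:lazy}: your value function $V(z,\stepsize)$ is exactly the paper's conjugate $\proxdual_\stepsize(z)$, your descent inequality is the paper's smoothness bound~\eqref{eqn:lipschitz-dual}, and the monotonicity of $V$ in $\stepsize$ (resting on $\prox \ge 0$) is precisely the device the paper uses to pass from $\proxdual_{\stepsize(t-1)}$ to $\proxdual_{\stepsize(t)}$ before telescoping. The only difference is presentational: you package the telescoping as a potential-function recursion with $W(0)=0$ and a final lower bound at $x^*$, whereas the paper first bounds $-\sum_{t=1}^T\<g(t),x^*\>$ by $\proxdual_{\stepsize(T)}(-z(T+1)) + \frac{1}{\stepsize(T)}\prox(x^*)$ and then cancels that conjugate term against the telescoped sum.
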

\noindent

\noindent Next we state a lemma that allows us to restrict our analysis to the
easier to analyze centralized sequence $\{y(t)\}_{t=0}^\infty$ from
\eqref{eqn:ydefn}:
\begin{lemma}
  \label{lemma:transfer-y-to-x}
  Consider the sequences $\{x_i(t)\}_{t=1}^\infty$,
  $\{z_i(t)\}_{t=0}^\infty$, and $\{y(t)\}_{t=0}^\infty$ defined
  according to equations~\eqref{eqn:lazy-unproject},
  ~\eqref{eqn:lazy-project}, and~\eqref{eqn:ydefn}. Recall
  that each $f_i$ is $L$-Lipschitz. For each $i \in
  \vertex$, we have
  \begin{align*}
    \sum_{t=1}^Tf(x_i(t)) - f(x^*) \leq \sum_{t=1}^T f(y(t)) - f(x^*) +
    L\sum_{t=1}^T \stepsize(t)\dnorm{\bar{z}(t) - z_i(t)}.
  \end{align*}
  Similarly, with the definitions $\what{y}(T) \defeq \frac{1}{T}\sum_{t=1}^T
  y(t)$ and $\what{x}_i(T) \defeq \frac{1}{T} \sum_{t=1}^T x_i(t)$,
  we have
  \begin{align*}
    f(\what{x}_i(T)) - f(x^*) & \leq f(\what{y}(T)) - f(x^*) +
    \frac{L}{T}\sum_{t=1}^T \stepsize(t)\dnorm{\bar{z}(t) - z_i(t)}. 
  \end{align*}
\end{lemma}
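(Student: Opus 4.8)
The plan is to derive both inequalities from a single geometric fact: the prox-projection $\Pi_\xdomain^\prox(\cdot, \stepsize)$ depends Lipschitz-continuously on its linear argument. Since $x_i(t)$ and $y(t)$ are the images of $z_i(t)$ and $\zb(t)$ under the same projection $\Pi_\xdomain^\prox(\cdot, \stepsize(t))$ (compare~\eqref{eqn:projection} and~\eqref{eqn:ydefn}), controlling $\norm{x_i(t) - y(t)}$ reduces to controlling $\dnorm{\zb(t) - z_i(t)}$. Combining this with the $L$-Lipschitz continuity of $f = \ninv \sum_i f_i$, which is immediate as an average of $L$-Lipschitz functions, and then summing over $t$, yields both claims.

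The main work—and the only place $1$-strong convexity of $\prox$ enters—is a projection-stability estimate: for any $a, b \in \R^d$, the points $u \defeq \Pi_\xdomain^\prox(a, \stepsize)$ and $v \defeq \Pi_\xdomain^\prox(b, \stepsize)$ satisfy $\norm{u - v} \le \stepsize \, \dnorm{a - b}$. To prove it, I would note that $\phi_a(x) \defeq \<a, x\> + \frac{1}{\stepsize} \prox(x)$ is $\frac{1}{\stepsize}$-strongly convex with respect to $\norm{\cdot}$, so optimality of $u$ for $\phi_a$ and of $v$ for $\phi_b$ gives
\begin{equation*}
  \phi_a(v) \ge \phi_a(u) + \frac{1}{2\stepsize} \norm{u - v}^2
  \qquad \text{and} \qquad
  \phi_b(u) \ge \phi_b(v) + \frac{1}{2\stepsize} \norm{u - v}^2.
\end{equation*}
Adding these and cancelling the common $\frac{1}{\stepsize}\prox$ terms leaves $\<a - b, v - u\> \ge \frac{1}{\stepsize} \norm{u - v}^2$, and the generalized Cauchy--Schwarz bound $\<a - b, v - u\> \le \dnorm{a - b} \norm{u - v}$ then gives the claim after dividing by $\norm{u-v}$.

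With this estimate in hand the two displays follow quickly. Applying it with $a,b$ equal to the (appropriately signed) dual variables $z_i(t)$ and $\zb(t)$ at step size $\stepsize(t)$ yields $\norm{x_i(t) - y(t)} \le \stepsize(t)\,\dnorm{\zb(t) - z_i(t)}$ for every $t$. For the first inequality, Lipschitz continuity of $f$ gives $f(x_i(t)) - f(y(t)) \le L\norm{x_i(t) - y(t)} \le L\stepsize(t)\dnorm{\zb(t) - z_i(t)}$; summing over $t = 1, \dots, T$ and subtracting $f(x^*)$ from each summand gives the first bound. For the averaged version, Lipschitz continuity and the triangle inequality give
\begin{equation*}
  f(\what{x}_i(T)) - f(\what{y}(T))
  \le L \norm{\what{x}_i(T) - \what{y}(T)}
  = L \Big\| \frac{1}{T}\sum_{t=1}^T (x_i(t) - y(t)) \Big\|
  \le \frac{L}{T} \sum_{t=1}^T \stepsize(t)\,\dnorm{\zb(t) - z_i(t)},
\end{equation*}
and subtracting $f(x^*)$ from both sides yields the second bound. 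The only point requiring care is aligning the step-size and sign conventions in the definitions~\eqref{eqn:lazy-project} of $x_i(t)$ and~\eqref{eqn:ydefn} of $y(t)$, so that both are genuinely projections of their dual variables at the common step size $\stepsize(t)$; this alignment is precisely what makes the factor $\stepsize(t)$, rather than a mismatched $\stepsize(t-1)$, appear in the bound.
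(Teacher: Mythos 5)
Your proposal is correct and follows essentially the same route as the paper: decompose $f(x_i(t)) - f(x^*)$ through $f(y(t))$ (and likewise for the averages, via the triangle inequality), apply the $L$-Lipschitz continuity of $f$, and reduce $\norm{x_i(t) - y(t)}$ to $\stepsize(t)\dnorm{\bar{z}(t) - z_i(t)}$ using the $\stepsize$-Lipschitz continuity of $\Pi_\xdomain^\prox(\cdot, \stepsize)$, which is exactly the paper's Lemma~\ref{lemma:projection}. The only cosmetic difference is in how that projection estimate is proved---you use the quadratic-growth consequence of strong convexity at the two constrained minimizers, while the paper adds the two first-order optimality conditions and invokes monotonicity of $\nabla\prox$---but these are equivalent standard arguments, so no further comparison is needed.
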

\noindent Equipped with these tools, we now turn the proof of
Theorem~\ref{theorem:master-convergence}.

\subsection{Proof of Theorem~\ref{theorem:master-convergence}}
\label{SecProofMaster}
Our proof is based on analyzing the sequence $\{y(t)\}_{t=0}^\infty$.
Given an arbitrary $x^* \in \xdomain$, we have
\begin{align}
\sum_{t=1}^T f(y(t)) - f(x^*) & = \sum_{t=1}^T \ninv\sum_{i=1}^n
 f_i(x_i(t)) - f(x^*) + \sum_{t=1}^T \ninv\sum_{i=1}^n \left[f_i(y(t))
 - f_i(x_i(t))\right] \nonumber \\ & \le \ninv\sum_{t=1}^T
 \sum_{i=1}^n f_i(x_i(t)) - f(x^*) + \sum_{t=1}^T \sum_{i=1}^n
 \frac{L}{n} \norm{y(t) - x_i(t)},
 \label{eqn:dist-lazy-to-bound}
\end{align}
where the inequality follows by the $L$-Lipschitz condition on $f_i$.

Let $g_i(t) \in \partial f_i(x_i(t))$ be a subgradient of $f_i$ at
$x_i(t)$.  Using convexity, we have the bound
\begin{equation}
  \ninv\sum_{t=1}^T \sum_{i=1}^n f_i(x_i(t)) - f_i(x^*) \le
  \ninv\sum_{t=1}^T \sum_{i=1}^n \<g_i(t), x_i(t) - x^*\>.
  \label{eqn:first-order-dist-bound}
\end{equation}
Breaking up the right hand side of~\eqref{eqn:first-order-dist-bound}
into two pieces, we obtain
\begin{align}
  \sum_{i=1}^n \<g_i(t), x_i(t) - x^*\> & = \sum_{i=1}^n \<g_i(t), y(t)
  - x^*\> + \sum_{i=1}^n \<g_i(t), x_i(t) - y(t)\>.
  \label{eqn:first-order-y-replacement}
\end{align}
By definition of the updates for $\bar{z}(t)$ and $y(t)$, we have
\begin{equation*}
  y(t) = \argmin_{x \in \xdomain}
  \bigg\{\ninv\sum_{s=1}^{t-1}\sum_{i=1}^n\<g_i(s), x\> +
  \frac{1}{\stepsize(t)}\prox(x)\bigg\}.
\end{equation*}
Thus, we see that the first term in the
decomposition~\eqref{eqn:first-order-y-replacement} can be written in
the same way as the bound in Lemma~\ref{lemma:ftrl-linear}, and as a
consequence, we have the bound
\begin{align}
  \ninv \sum_{t=1}^T \<\sum_{i=1}^n g_i(t), y(t) - x^*\>
  & \le \half \sum_{t=1}^T \stepsize(t-1)
  \dnormb{\ninv \sum_{i=1}^n g_i(t)}^2
  + \frac{1}{\stepsize(T)} \prox(x^*) \nonumber \\
  & \le \frac{L^2}{2}
  \sum_{t=1}^T \stepsize(t-1) + \frac{1}{\stepsize(T)} \prox(x^*).
  \label{eqn:dist-lazy-barz-bound}
\end{align}
It remains to control the final two terms in the
bounds~\eqref{eqn:dist-lazy-to-bound} and
\eqref{eqn:first-order-y-replacement}.  Since $\dnorm{g_i(t)} \le L$
by assumption, we have
\begin{align*}
  \lefteqn{\sum_{t=1}^T \sum_{i=1}^n \frac{L}{n} \norm{y(t) - x_i(t)} +
    \ninv\sum_{t=1}^T \sum_{i=1}^n \<g_i(t), x_i(t) - y(t)\>} \\
  & \le \frac{2L}{n} \sum_{t=1}^T \sum_{i=1}^n \norm{y(t) - x_i(t)} \\
  & = \frac{2 L}{n} \sum_{t=1}^T \sum_{i=1}^n
  \norm{\Pi^\prox_\xdomain(-\bar{z}(t), \stepsize(t)) -
  \Pi^\prox_\xdomain(-z_i(t), \stepsize(t))}.
\end{align*}
By the $\stepsize$-Lipschitz continuity of the projection operator
$\Pi_\xdomain^\prox(\cdot, \stepsize)$ (see
Appendix~\ref{Applemma:projection}), we have
\begin{align*}
  \frac{2 L}{n} \sum_{t=1}^T \sum_{i=1}^n
  \norm{\Pi^\prox_\xdomain(\bar{z}(t), \stepsize(t)) -
    \Pi^\prox_\xdomain(z_i(t), \alpha)}
  & \le \frac{2 L}{n} \sum_{t=1}^T
  \sum_{i=1}^n \stepsize(t)\dnorm{\bar{z}(t) - z_i(t)}.
\end{align*}
Combining this bound with~\eqref{eqn:dist-lazy-to-bound}
and~\eqref{eqn:dist-lazy-barz-bound} yields the running sum bound
\begin{equation}
  \label{eqn:running-sum-bound}
  \sum_{t=1}^T \big[f(y(t)) - f(x^*)\big]
  \le
  \frac{1}{\stepsize(T)} \prox(x^*) + \frac{L^2}{2} \sum_{t=1}^T \stepsize(t-1)
  + \frac{2L}{n} \sum_{t=1}^T \sum_{j=1}^n \stepsize(t) \dnorm{\bar{z}(t)
    - z_j(t)}.
\end{equation}
Applying Lemma~\ref{lemma:transfer-y-to-x} to \eqref{eqn:running-sum-bound}
gives that $\sum_{t=1}^T [f(x_i(t)) - f(x^*)]$ is upper bounded by
\begin{equation*}
  \frac{1}{\stepsize(T)} \prox(x^*) + \frac{L^2}{2}
  \sum_{t=1}^T\stepsize(t-1) + \frac{2 L}{n} \sum_{t=1}^T
  \sum_{j=1}^n \stepsize(t)\dnorm{\bar{z}(t) - z_j(t)} +
  L\sum_{t=1}^T \stepsize(t)\dnorm{\bar{z}(t) - z_i(t)}.
\end{equation*}
Dividing both sides by $T$ and using convexity of $f$ yields the
bound~\eqref{eqn:master-convergence}.

\section{Convergence rates, spectral gap, and network topology}
\label{sec:fixed-p-convergence}

In this section, we will give concrete convergence rates for the
distributed dual averaging algorithm based on the mixing time of a
random walk according to the doubly stochastic matrix $\stochmat$.
The understanding of the dependence of our convergence rates in terms
of the underlying network topology is crucial, because it can provide
important cues to the system administrator in a clustered computing
environment or for the locations and connectivities of sensors in a
sensor network.  We begin in Section~\ref{SecProofSimpleConv} with
the proof of Theorem~\ref{theorem:simple-convergence}.  In
Section~\ref{sec:communication-graph}, we prove the graph-specific
convergence rates stated in Corollary~\ref{corollary:graph-catalog},
whereas Section~\ref{sec:lower-bound} contains a proof of the lower
bound stated in Proposition~\ref{proposition:tight-spectral-gap}.

Throughout this section, we adopt the following notational
conventions. For an $n\times n$ matrix $B$, we call its singular
values $\sigma_1(B) \geq \sigma_2(B) \geq \dots \geq \sigma_n(B) \geq
0$. For a real symmetric $B$, we use $\lambda_1(B) \ge \lambda_2(B)
\ge \ldots \ge \lambda_n(B)$ to denote the $n$ real eigenvalues of
$B$. We let $\Delta_n = \{x \in \R^n \, \mid \, x \succeq 0,
\sum_{i=1}^n x_i = 1 \}$ denote the $n$-dimensional probability
simplex.  We make frequent use of the following standard inequality:
for any positive integer $t = 1, 2, \ldots$ and any $x \in \Delta_n$,
\begin{equation}
  \label{eqn:tv-bound}
  \tvnorm{\stochmat^tx - \onevec/n}
  = \half \lone{\stochmat^tx - \onevec/n}
  \le \half \sqrt{n}\ltwo{\stochmat^tx - \onevec/n}
  \le \half \sigma_2(\stochmat)^t \sqrt{n}.
\end{equation}
For a brief review of the relevant standard Perron-Frobenius and
matrix theory, we refer the reader to Appendix~\ref{app:markovchain}.

\subsection{Proof of Theorem~\ref{theorem:simple-convergence}}
\label{SecProofSimpleConv}

We focus on controlling the network error term in the
bound~\eqref{eqn:master-convergence}, namely the quantity
\begin{align*}
  \frac{L}{n}\sum_{t=1}^T \sum_{i=1}^n \stepsize(t)
  \dnorm{\bar{z}(t) - z_i(t)}.
\end{align*}
Define the matrix $\Phi(t, s) = \stochmat^{t - s + 1}$ (in the sequel
we allow the stochastic matrix $\stochmat$ to change as a function
of time). Let $[\Phi(t, s)]_{ji}$ be the $j$th entry of the $i$th
column of $\Phi(t, s)$. Then via a bit of algebra, we can write
\begin{equation}
  \label{eqn:evolve-z-phi}
  z_i(t + 1) = \sum_{j=1}^n [\Phi(t, s)]_{ji} z_j(s)
  + \sum_{r = s + 1}^t \bigg(\sum_{j=1}^n [\Phi(t, r)]_{ji}
  g_j(r - 1) \bigg) + g_i(t).
\end{equation}
Clearly the above reduces to the standard update
\eqref{eqn:lazy-unproject} when $s = t$.
Since $\bar{z}(t)$ evolves simply as in \eqref{eqn:mean-z-update}, we assume 
that $z_i(0) = \bar{z}(0)$ to avoid notational clutter---we can simply start
with $z_i(0) = 0$---and use \eqref{eqn:evolve-z-phi} to see
\begin{equation}
  \label{eqn:bar-z-zi-diff}
  \bar{z}(t) - z_i(t) = \sum_{s=1}^{t-1} \sum_{j=1}^n
  (1/n - [\Phi(t - 1, s)]_{ji}) g_j(s - 1) +
  \bigg(\frac{1}{n} \sum_{j=1}^n (g_j(t-1) - g_i(t - 1))\bigg).
\end{equation}
We use the fact that $\dnorm{g_i(t)} \le L$ for all $i$ and $t$
and \eqref{eqn:bar-z-zi-diff} to see that
\begin{align}
  \dnorm{\bar{z}(t) - z_i(t)}
  & = \dnormb{\sum_{s=1}^{t-1} \sum_{j=1}^n
    (1/n - [\Phi(t - 1, s)]_{ji}) g_j(s - 1)
    + \bigg(\frac{1}{n} \sum_{j=1}^n g_j(t - 1) - g_i(t - 1)
    \bigg)} \nonumber \\
  & \le \sum_{s=1}^{t-1} \sum_{j=1}^n \dnorm{g_j(s  -1)}
  |(1 / n) - [\Phi(t - 1, s)]_{ji}| + \ninv
  \sum_{i=1}^n \dnorm{g_j(t-  1) - g_i(t - 1)} \nonumber \\
  & \le \sum_{s=1}^{t-1} L \lone{[\Phi(t - 1, s)]_i - \onevec/n}
  + 2L. \label{eqn:intermediate-bar-z-zi-bound}
\end{align}

Now we break the sum in \eqref{eqn:intermediate-bar-z-zi-bound} into two terms
separated by a cutoff point $\what{t}$. The first term consists of
``throwaway'' terms, that is, timesteps $s$ for which the Markov chain with
transition matrix $\stochmat$ has not mixed, while the second consists of
steps $s$ for which $\lone{[\Phi(t - 1, s)]_i - \onevec/n}$ is small. Note
that the indexing on $\Phi(t - 1, s) = \stochmat^{t - s + 1}$ implies that for
small $s$, $\Phi(t - 1, s)$ is close to uniform.  From \eqref{eqn:tv-bound},
$\lone{[\Phi(t, s)]_j - \onevec/n} \le \sqrt{n} \sigma_2(\stochmat)^{t - s +
  1}$. Hence, if
\begin{equation*}
  t - s \ge \frac{\log \epsilon^{-1}}{\log
    \sigma_2(\stochmat)^{-1}} - 1
  \quad {\rm we~immediately~have} \quad
  \lone{[\Phi(t, s)]_j -
    \onevec/n} \le \sqrt{n} \epsilon
\end{equation*}
Thus, by setting $\epsilon^{-1} = T \sqrt{n}$, for $t - s + 1 \ge \frac{\log(T
  \sqrt{n})}{\log \sigma_2(\stochmat)^{-1}}$, we have
\begin{equation}
  \label{eqn:fixed-P-phi-converge}
  \lone{[\Phi(t, s)]_j - \onevec/n} \le \frac{1}{T}.
\end{equation}
For larger $s$, we simply have $\lone{[\Phi(t, s)]_j - \onevec/n} \le 2$.  The
above suggests that we split the sum at $\what{t} = \frac{\log T\sqrt{n}}{\log
  \sigma_2(\stochmat)^{-1}}$. We break apart the sum in
\eqref{eqn:intermediate-bar-z-zi-bound} and use
\eqref{eqn:fixed-P-phi-converge} to see that since $t - 1 - (t - \what{t}) =
\what{t}$ and there are at most $T$ steps in the summation,
\begin{align}
  \dnorm{\bar{z}(t) - z_i(t)}
  & \le L\sum_{s = t - \what{t}}^{t-1}
  \lone{\Phi(t - 1, s) e_i - \onevec/n}
  + L \sum_{s = 1}^{t - 1 - \what{t}} \lone{\Phi(t - 1, s)
    e_i - \onevec/n} + 2  L \nonumber \\
  & \le 2 L \frac{\log(T \sqrt{n})}{\log \sigma_2(\stochmat)^{-1}}
  + 3L
  \le 2 L \frac{\log (T \sqrt{n})}{1 - \sigma_2(\stochmat)}
  + 3L.
  \label{eqn:fixed-p-convergence}
\end{align}
The last inequality follows from the concavity of $\log(\cdot)$, since $\log
\sigma_2(\stochmat)^{-1} \ge 1 - \sigma_2(P)$.

Combining \eqref{eqn:fixed-p-convergence} with the running sum bound in
\eqref{eqn:running-sum-bound} of the proof of the basic theorem,
Theorem~\ref{theorem:master-convergence}, we immediately see that for $x^* \in
\xdomain$,
\begin{equation}
  \sum_{t=1}^T f(y(t)) - f(x^*) \le \frac{1}{\stepsize(T)} \prox(x^*)
  + \frac{L^2}{2} \sum_{t=1}^T \stepsize(t-1)
  + 6 L^2 \sum_{t=1}^T \stepsize(t)
  + 4 L^2 \frac{\log(T \sqrt{n})}{
    1 - \sigma_2(\stochmat)}\sum_{t=1}^T \stepsize(t).
  \label{eqn:fixed-p-to-cleanup}
\end{equation}
Appealing to Lemma~\ref{lemma:transfer-y-to-x} allows us to obtain the same
result on the sequence $x_i(t)$ with slightly worse constants.  Note that
$\sum_{t=1}^T t^{-1/2} \le 2\sqrt{T} - 1$. Thus, using the assumption that
$\prox(x^*) \le \radius^2$, using convexity to bound $f(\what{y}(T)) \le
\frac{1}{T} \sum_{t=1}^T f(y(t))$ (and similarly for $\what{x}_i(T)$), and
setting $\stepsize(t)$ as in the statement of the theorem completes the proof.

\subsection{Proof of Corollary~\ref{corollary:graph-catalog}}
\label{sec:communication-graph}

The corollary is based on bounding the spectral gap of the matrix
$\stochmat_n(\graph)$ from equation~\eqref{eqn:mod-stochmat}.
\begin{lemma}
  \label{lemma:stochmat-laplacian}
  The matrix $\stochmat_n(\graph)$ satisfies the bound
  \begin{equation*}
    \sigma_2(\stochmat_n(\graph)) \leq \max \Big\{ 1 -
    \frac{\min_i\degree_i}{\dmax + 1}\lambda_{n-1}(\laplacian),
    \frac{\dmax}{\dmax+1}\lambda_1(\laplacian)-1 \Big\}.
  \end{equation*}
\end{lemma}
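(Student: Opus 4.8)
The plan is to exploit the fact that $\stochmat_n(\graph)$ is symmetric, so its singular values are the absolute values of its eigenvalues, and then to translate eigenvalue bounds on $\stochmat_n(\graph)$ into bounds on the spectrum of $\laplacian$. Writing $\stochmat_n(\graph) = I - \frac{1}{\dmax+1}(D - A)$, the matrix $D - A$ is the unnormalized graph Laplacian: it is symmetric positive semidefinite, annihilates $\onevec$, and its eigenvalues $0 = \mu_n(D-A) \le \mu_{n-1}(D-A) \le \cdots \le \mu_1(D-A)$ are tied to those of $\laplacian$ through the congruence $D - A = D^{1/2} \laplacian D^{1/2}$. Consequently the eigenvalues of $\stochmat_n(\graph)$ are exactly $1 - \mu_i(D-A)/(\dmax+1)$; the value $\mu_n = 0$ produces the top eigenvalue $1$ (with eigenvector $\onevec$), so $\sigma_1(\stochmat_n(\graph)) = 1$. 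The second largest singular value is therefore the larger in absolute value of the two ``extreme'' remaining eigenvalues, giving
\begin{equation*}
  \sigma_2(\stochmat_n(\graph)) = \max\Big\{ 1 - \frac{\mu_{n-1}(D-A)}{\dmax+1}, \; \frac{\mu_1(D-A)}{\dmax+1} - 1 \Big\}.
\end{equation*}
This two-term maximum is precisely the ``eigenvalue order'' subtlety flagged before the lemma: depending on the graph, either the second-smallest or the largest eigenvalue of $D - A$ may determine $\sigma_2$. It remains to bound $\mu_1(D-A)$ from above and $\mu_{n-1}(D-A)$ from below in terms of $\lambda_1(\laplacian)$ and $\lambda_{n-1}(\laplacian)$.

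For the largest eigenvalue I would use the congruence directly in a Rayleigh-quotient argument: setting $v = D^{1/2} x$, one has $x^\top (D-A) x = v^\top \laplacian v \le \lambda_1(\laplacian)\, \|v\|_2^2$, while $\|x\|_2^2 = v^\top D^{-1} v \ge \|v\|_2^2 / \dmax$. Taking the supremum of $x^\top(D-A)x/\|x\|_2^2$ over $x \neq 0$ yields $\mu_1(D-A) \le \dmax\, \lambda_1(\laplacian)$, which produces the second term $\frac{\dmax}{\dmax+1}\lambda_1(\laplacian) - 1$ of the claimed bound.

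The delicate step, and the one I expect to be the main obstacle, is the matching lower bound $\mu_{n-1}(D-A) \ge \dmin\, \lambda_{n-1}(\laplacian)$. The difficulty is that $\mu_{n-1}(D-A) = \min_{\langle x, \onevec\rangle = 0} x^\top(D-A)x / \|x\|_2^2$ imposes ordinary orthogonality to $\onevec$, whereas $\lambda_{n-1}(\laplacian)$ is a minimum of $x^\top(D-A)x / (x^\top D x)$ over the $D$-weighted constraint $\sum_i \degree_i x_i = 0$ (since the kernel of $\laplacian$ is spanned by $D^{1/2}\onevec$, not $\onevec$). These two codimension-one subspaces do not coincide, so one cannot simply substitute $v = D^{1/2}x$. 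My plan to reconcile them is to use the edge-difference form $x^\top(D-A)x = \sum_{(i,j)\in E}(x_i - x_j)^2$, which is invariant under the shift $\tilde{x} = x - c\,\onevec$, and to choose $c = (\sum_i \degree_i x_i)/(\sum_i \degree_i)$ so that $\tilde{x}$ satisfies the $D$-weighted constraint. Then $x^\top(D-A)x = \tilde{x}^\top(D-A)\tilde{x} \ge \lambda_{n-1}(\laplacian)\, \tilde{x}^\top D \tilde{x} \ge \dmin\, \lambda_{n-1}(\laplacian)\, \|\tilde{x}\|_2^2$, and the key observation is that $\|\tilde{x}\|_2^2 = \|x\|_2^2 + n c^2 \ge \|x\|_2^2$, precisely because $\langle x, \onevec\rangle = 0$ forces the shift to increase the Euclidean norm. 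Dividing through gives $\mu_{n-1}(D-A) \ge \dmin\, \lambda_{n-1}(\laplacian)$ for all $x \perp \onevec$, hence $1 - \mu_{n-1}(D-A)/(\dmax+1) \le 1 - \frac{\dmin}{\dmax+1}\lambda_{n-1}(\laplacian)$, which is the first term. Combining the two estimates with the displayed expression for $\sigma_2(\stochmat_n(\graph))$ completes the proof; the only quantities that enter are the degree extremes $\dmin, \dmax$ and the eigenvalues of $\laplacian$, matching the ``technical detail'' about degree ratios anticipated in the surrounding text.
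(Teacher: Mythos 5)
Your proof is correct, and it arrives at the paper's bound by a more self-contained route than the paper itself. The paper's proof shares your skeleton---write $\stochmat_n(\graph) = I - (\dmax+1)^{-1}(D-A)$ with $D - A = D^{1/2}\laplacian D^{1/2}$, note that the eigenvalues of $\stochmat_n(\graph)$ are $1 - (\dmax+1)^{-1}\lambda_k(D^{1/2}\laplacian D^{1/2})$, and bound $\sigma_2$ by the larger of the two extreme deviations---but it obtains the key spectral comparison $\lambda_k(D^{1/2}\laplacian D^{1/2}) \in [\dmin \lambda_k(\laplacian), \, \dmax \lambda_k(\laplacian)]$ in a single stroke by citing Ostrowski's theorem on congruent matrices (Theorem 4.5.9 of Horn and Johnson). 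You instead prove the two one-sided inequalities actually needed from first principles: the upper bound $\mu_1(D-A) \le \dmax \lambda_1(\laplacian)$ via the substitution $v = D^{1/2}x$ in the Rayleigh quotient, and the lower bound $\mu_{n-1}(D-A) \ge \dmin \lambda_{n-1}(\laplacian)$ via the centering trick. The latter is the genuinely nontrivial step, and your treatment of it is sound: Courant--Fischer for $D-A$ minimizes over $x \perp \onevec$ while $\lambda_{n-1}(\laplacian)$ corresponds to the $D$-weighted constraint $\sum_i \degree_i x_i = 0$, and your shift $\tilde{x} = x - c\onevec$ reconciles the two because it leaves the Dirichlet form $\sum_{(i,j)\in E}(x_i - x_j)^2$ unchanged while, precisely when $\langle x, \onevec \rangle = 0$, it can only increase the Euclidean norm ($\|\tilde{x}\|_2^2 = \|x\|_2^2 + nc^2$). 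Your claim that $\sigma_2$ \emph{equals} the two-term maximum is also fine, since $t \mapsto |1 - t/(\dmax+1)|$ is convex, so the maximum over the interior eigenvalues of $D-A$ is attained at $\mu_{n-1}$ or $\mu_1$ (and, as in the paper, double stochasticity guarantees $\sigma_1 = 1$). What the paper's route buys is brevity; what yours buys is an elementary, citation-free argument that in effect proves the relevant half of Ostrowski's inequality for this particular congruence and makes transparent exactly where $\dmin$ and $\dmax$ enter the bound.
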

\begin{proof}
By a theorem of Ostrowski on congruent matrices (cf. Theorem
4.5.9,~\cite{HornJo85}), we have
  \begin{equation}
    \lambda_k(D^{1/2} \laplacian D^{1/2})
    \in \left[\min_i \degree_i \lambda_k(\laplacian),
      \max_i \degree_i \lambda_k(\laplacian) \right].
    \label{eqn:ostrowski}
  \end{equation}
  Since $\laplacian D^{1/2}\onevec = 0$, we have $\lambda_n(\laplacian) = 0$,
  and so it suffices to focus on $\lambda_1(D^{1/2} \laplacian D^{1/2})$
  and $\lambda_{n-1}(D^{1/2} \laplacian D^{1/2})$. From the
  definition~\eqref{eqn:mod-stochmat}, the eigenvalues of $\stochmat$ are of
  the form $1 - (\dmax + 1)^{-1} \lambda_k (D^{1/2} \laplacian
  D^{1/2})$. The bound~\eqref{eqn:ostrowski} coupled with the
  fact that all the eigenvalues of $\laplacian$ are non-negative implies
  that $\sigma_2(\stochmat) = \max_{k < n} \big\{ \big|1 - (\dmax +
  1)^{-1} \lambda_k(D^{1/2} \laplacian D^{1/2}) \big| \big\}$ is upper
  bounded by the larger of
  \begin{equation*}
    1 - \frac{\dmin}{\dmax + 1} \lambda_{n-1}(\laplacian) ~~~ {\rm
      and} ~~~ \frac{\dmax}{\dmax + 1} \lambda_1(\laplacian) - 1.
    \qedhere
  \end{equation*}
\end{proof}
Much of spectral graph theory is devoted to bounding
$\lambda_{n-1}(\laplacian)$ sufficiently far away from zero, and
Lemma~\ref{lemma:stochmat-laplacian} allows us to conveniently leverage
such results for bounding the convergence rate of our algorithm.

Note that computing the upper bound in Lemma~\ref{lemma:stochmat-laplacian}
requires controlling both $\lambda_{n-1}(\laplacian)$ and
$\lambda_1(\laplacian)$.  In order to circumvent this complication, we
use the well-known idea of a ``lazy'' random
walk~\cite{Chung98,LevinPeWi08}, in which we replace $\stochmat$ by
$\frac{1}{2}(I+\stochmat)$.  The resulting symmetric matrix has the
same eigenstructure as $\stochmat$, and moreover, we have
\begin{equation}
  \sigma_2 \Big(\half(I+P) \Big)
  = \lambda_2 \Big(\half(I+P) \Big)
  = \lambda_2 \Big(I - \frac{1}{2(\dmax + 1)}D^{1/2}\laplacian D^{1/2}
  \Big) \leq 1 - \frac{\dmin}{2(\dmax + 1)}\lambda_{n-1}(\laplacian).
  \label{eqn:lazy-walk-sigma2}
\end{equation}
Consequently, it is sufficient to bound only $\lambda_{n-1}(\laplacian)$,
which is more convenient from a technical standpoint. The convergence rate
implied by the lazy random walk through
Theorem~\ref{theorem:simple-convergence} is no worse than twice that
of the original walk, which is insignificant for the analysis in this paper.


\vspace*{.2in}

\noindent We are now equipped to address each of the graph classes covered by
Corollary~\ref{corollary:graph-catalog}.

\paragraph{Cycles and paths:} Recall the regular $k$-connected cycle from
Figure~\ref{fig:graph-types}(a), constructed by placing the $n$ nodes
on a circle and connecting every node to $k$ neighbors on the right
and left.  For this graph, the Laplacian $\laplacian$ is a circulant
matrix with diagonal entries $1$ and off-diagonal non-zero entries
$-1/2k$.  Known results on circulant matrices (see Chapter 3 of
Gray~\cite{Gray06}) imply that it has $m$th eigenvalue
\begin{equation*}
  \lambda_m(\laplacian) = 1 - \frac{1}{2k} \sum_{j=1}^k \exp\left(-2\pi
  i j m / n\right) - \frac{1}{2k} \sum_{j=1}^k \exp\left(-2\pi i (n - j)
  m / n \right) = 1 - \frac{1}{k} \sum_{j=1}^k \cos\left(\frac{2\pi j
    m}{n}\right).
\end{equation*}
For $m = n - 1$ and $k = o(n)$, the last equation can be massaged
into~\cite[Section VI.A]{BoydGhPrSh06}
\begin{equation*}
  \lambda_{n-1}(\laplacian)
  = 1 - \cos\left(\frac{2\pi k}{n}\right) + \Theta\left(\frac{k^4}{n^4}\right).
\end{equation*}
By performing a Taylor expansion of $\cos(\cdot)$, we see that
$\lambda_{n-1}(\laplacian) = \Theta\left(\frac{k^2}{n^2}\right)$ for $k =
o(n)$.

Now consider the regular $k$-connected path, a path in which each node
is connected to the $k$ neighbors on its right and left.  By computing
Cheeger constants (see Lemma~\ref{proposition:k-connected-path} in
Appendix~\ref{sec:k-connected-path}), we see that if $k \le
\sqrt{n}$, then $\lambda_{n-1}(\laplacian) = \Theta(k^2/n^2)$.
Note also that for the $k$-connected path on $n$ nodes, $\min_i
\degree_i = k$ and $\dmax = 2k$. Thus, we can combine the previous
two paragraphs with Lemma~\ref{lemma:stochmat-laplacian} to see that for
regular $k$-connected paths or cycles with $k \le \sqrt{n}$,
\begin{equation}
  \label{eqn:eigenvalue-k-path}
  \sigma_2(\stochmat) = 1 - \Theta\left(\frac{k^2}{n^2}\right).
\end{equation}
Substituting the bound~\eqref{eqn:eigenvalue-k-path} into
Theorem~\ref{theorem:simple-convergence} yields the claim of
Corollary~\ref{corollary:graph-catalog}(a).

\paragraph{Regular grids:}  Now consider the case of a 
$\sqrt{n}$-by-$\sqrt{n}$ grid, focusing specifically on regular
$k$-connected grids, in which any node is joined to every node that is
fewer than $k$ horizontal or vertical edges away in an axis-aligned
direction. In this case, we use results on Cartesian products of
graphs~\cite[Section 2.6]{Chung98} to analyze the eigen-structure of
the Laplacian.  In particular, the toroidal $\sqrt{n}$-by-$\sqrt{n}$
$k$-connected grid is simply the Cartesian product of two regular
$k$-connected cycles of $\sqrt{n}$ nodes.  The second
smallest eigenvalue of a Cartesian product of graphs is half the
minimum of second-smallest eigenvalues of the original
graphs~\cite[Theorem 2.13]{Chung98}. Thus, based on the preceding
discussion of $k$-connected cycles, we conclude that if $k =
o(\sqrt{n})$, then we have $\lambda_{n-1}(\laplacian) =
\Theta(k^2/n)$.
For a non-toroidal $\sqrt{n}$-by-$\sqrt{n}$ grid (in which the network
is \emph{not} ``wrapped'' on its boundaries, as in
Figure~\ref{fig:graph-types}(b)), we use the previous discussion of
regular $k$-connected paths, since the grid is the Cartesian product
of two $k$-connected paths of $\sqrt{n}$ nodes. We immediately see that
$\lambda_{n-1}(\laplacian) = \Theta(k^2/n)$. In both cases, for
$\sqrt{n}$-by-$\sqrt{n}$ $k$-connected grids, we use
Lemma~\ref{lemma:stochmat-laplacian} and \eqref{eqn:lazy-walk-sigma2} to
see that for $k \le n^{1/4}$,
\begin{equation}
  \label{eqn:eigenvalue-k-grid}
  \sigma_2(\stochmat) = 1 - \Theta\left(\frac{k^2}{n}\right).
\end{equation}
The result in Corollary~\ref{corollary:graph-catalog}(b) immediately follows.

\paragraph{Random geometric graphs:}
 Using the proof of Lemma~10 from Boyd et al.~\cite{BoydGhPrSh06}, we
see that for any $\epsilon > 0$, if $r = \sqrt{\log^{1 + \epsilon} n /
(n\pi)}$, then with probability at least $1 - 2 / n^{c-1}$,
\begin{equation}
  \label{eqn:degree-geometric}
  \min_i \degree_i \ge \log^{1 + \epsilon} n - \sqrt{2} c \log n
  ~~~ {\rm and} ~~~
  \max_i \degree_i \le \log^{1 + \epsilon} n + \sqrt{2} c \log n.
\end{equation}
Thus, letting $\laplacian$ be the graph Laplacian of a random geometric graph,
if we can bound $\lambda_{n-1}(\laplacian)$, \eqref{eqn:degree-geometric}
coupled with Lemma~\ref{lemma:stochmat-laplacian} will control the convergence
rate of our algorithm.

Recent work of von Luxburg et al.~\cite{LuxburgRaHe10} gives
concentration results on the second-smallest eigenvalue of a geometric
graph. In particular, their Theorem 3 says that there are universal constants
$c_1, \ldots, c_5 > 0$ such that with probability at least $1 - c_1 n
\exp(-c_2 n r^2) - c_3 \exp(-c_4 n r^2) / r^2$, $\lambda_{n-1}(\laplacian) \ge
c_5 r^2$. Parsing this a bit, we see that if $r = \omega(\sqrt{\log n / n})$,
then with exceedingly high probability, $\lambda_{n-1}(\laplacian) = \Omega(r)
= \omega(\log n / n)$. Using \eqref{eqn:degree-geometric}, we see that for $r
= (\log^{1 + \epsilon} n / n)^{1/2}$,
\begin{equation*}
\frac{\min_i \degree_i}{\max_i \degree_i} = \Theta(1)
~~~ {\rm and} ~~~
\lambda_{n-1}(\laplacian) =
\Omega\left(\frac{\log^{1 + \epsilon}n}{n}\right)
\end{equation*}
with high probability. Combining the above equation with
Lemma~\ref{lemma:stochmat-laplacian} and \eqref{eqn:lazy-walk-sigma2}, we have
\begin{equation}
  \label{eqn:eigenvalue-geometric-graph}
  \sigma_2(\stochmat) = 1 - \Omega\left(\frac{\log^{1 + \epsilon}n}{n}
  \right).
\end{equation}
Thus we have obtained the result of
Corollary~\ref{corollary:graph-catalog}(c). Our bounds show that a
grid and a random geometric graph exhibit the same convergence rate up
to logarithmic factors.

\paragraph{Expanders:}
The constant spectral gap in expanders~\cite[Chapter 6]{Chung98}
removes any penalty due to network communication (up to logarithmic factors),
and hence yields Corollary~\ref{corollary:graph-catalog}(d).


\subsection{Proof of Proposition~\ref{proposition:tight-spectral-gap}}
\label{sec:lower-bound}

We now give a proof of
Proposition~\ref{proposition:tight-spectral-gap}, which shows that the
dependence of our convergence rates on the spectral gap is tight. The
proof is based on construction of a set of objective functions $f_i$
that force convergence to be slow by using the second eigenvector of
the communication matrix $\stochmat$.

Recall that $\onevec \in \real^n$ is the eigenvector of $\stochmat$
corresponding to its largest eigenvalue (equal to $1$). Let $v \in
\real^n$ be the eigenvector of $\stochmat$ corresponding to its second
singular value, $\sigma_2(\stochmat)$.  By using the lazy random walk
defined in Section~\ref{sec:communication-graph}, we may assume
without loss of generality that $\lambda_2(\stochmat) =
\sigma_2(\stochmat)$.  Let $w = \frac{v}{\linf{v}}$ be a normalized
version of the second eigenvector of $\stochmat$, and note that
$\sum_{i=1}^n w_i = 0$. Without loss of generality, we assume that
there is an index $i$ for which $w_i = -1$ (otherwise we can flip
signs in what follows); moreover, by re-indexing as needed, we may
assume that $w_1 = -1$.  We set $\xdomain = [-1, 1] \subset \R$, and
define the univariate functions $f_i(x) \defeq (c + w_i) x$, so that
the global problem is to minimize
\begin{equation*} 
\frac{1}{n}\sum_{i=1}^n f_i(x) = \frac{1}{n} \sum_{i=1}^n (c + w_i) x
= c x
\end{equation*}
for some constant $c > 0$ to be chosen.  Note that each $f_i$ is $c +
1$-Lipschitz.  By construction, we see immediately that $x^* = -1$ is
optimal for the global problem.

Now consider the evolution of the $\{z(t)\}_{t=0}^\infty \subset \R^n$,
as generated by the update~\eqref{eqn:lazy-unproject}. By
construction, we have $g_i(t) = c + w_i$ for all $t = 1,2,\ldots$.
Defining the vector $g = (c \onevec + w) \in \R^n$, we have the
evolution
\begin{align}
  z(t + 1) & = \stochmat z(t) + g = \stochmat^2 z(t - 1) + \stochmat g + g
  = \cdots =
  \sum_{\tau=0}^t \stochmat^\tau g \nonumber \\
  & = \sum_{\tau = 0}^{t-1} \stochmat^\tau (w + c \onevec)
  =  \sum_{\tau = 0}^{t-1} \stochmat^\tau w + c t \onevec
  = \sum_{\tau=0}^{t-1} \sigma_2(\stochmat)^\tau w + c t \onevec
  \label{eqn:z-lower-evolution}
\end{align}
since $\stochmat \onevec = \onevec$.

In order to establish a lower bound, it suffices to show that at least
one node is far from the optimum after $t$ steps, and we focus on node
$1$.  Since $w_1 = -1$, the evolution~\eqref{eqn:z-lower-evolution}
guarantees that
\begin{equation}
  \label{EqnZbehave}
  z_1(t + 1) = -\sum_{\tau=0}^{t-1} \sigma_2(\stochmat)^\tau + ct
  = ct - \frac{1 - \sigma_2(\stochmat)^{t-1}}{1 - \sigma_2(\stochmat)}.
\end{equation}
Recalling that $\prox(x) = \half x^2$ for this scalar setting, we have
\begin{equation*}
  x_i(t+1) = \argmin_{x \in \xdomain}\Big\{z_i(t+1) x +
  \frac{1}{2\stepsize(t)} x^2\Big\}
  = \argmin_{x \in \xdomain}
  \Big\{\big(x + \stepsize(t)z_i(t+1)\big)^2\Big\} 
\end{equation*}
Hence $x_1(t)$ is the projection of $-\stepsize(t)z_1(t+1)$ onto $[-1, 1]$,
and unless $z_1(t) > 0$ we have
\begin{equation*}
  f(x_1(t)) - f(-1) \geq c > 0.
\end{equation*}
If $t$ is overly small, the relation~\eqref{EqnZbehave} will
guarantee that $z_1(t) \le 0$, so that $x_1(t)$ is far from the
optimum.  If we choose $c \leq 1/3$, then a little calculation shows
that we require $t = \Omega((1 - \sigma_2(\stochmat))^{-1})$ in order
to drive $z_1(t)$ below zero.

\section{Convergence rates for stochastic communication}
\label{sec:random-P-convergence}

In this section, we develop theory appropriate for stochastic and
time-varying communication, which we model by a sequence
$\{\stochmat(t)\}_{t=0}^\infty$ of random matrices.  We begin in
Section~\ref{SecBasicStochastic} with basic convergence results in
this setting, and then prove
Theorem~\ref{theorem:random-P-convergence}.  Section~\ref{SecGossip}
contains a more detailed treatment of the case of gossip algorithms,
and Section~\ref{SecEdgeFail} contains the setting of edge
failures.

\subsection{Basic convergence analysis}
\label{SecBasicStochastic}

Recall that Theorem~\ref{theorem:master-convergence} involves the sum
$\frac{2L}{n} \sum_{t=1}^T \sum_{i=1}^n \stepsize(t)\dnorm{\bar{z}(t)
  - z_i(t)}$. In Section~\ref{sec:fixed-p-convergence}, we showed how
to control this sum when communication between agents occurs on a
static underlying network structure via a doubly-stochastic matrix
$\stochmat$.  We now relax the assumption that $\stochmat$ is fixed
and instead let $\stochmat(t)$ vary over time.

\subsubsection{Markov chain mixing for stochastic communication}

We use $\stochmat(t) = [\stochvec_1(t) ~ \cdots ~ \stochvec_n(t)]$ to
denote the doubly stochastic symmetric matrix at iteration $t$. The
update employed by the algorithm, modulo changes in $\stochmat$, is
given by the usual updates~\eqref{eqn:lazy-unproject}
and~\eqref{eqn:lazy-project}---namely,
\begin{equation*}
  z_i(t + 1) = \sum_{j=1}^n \stochvec_{ij}(t) z_j(t) + g_i(t), ~~~ x_i(t
  + 1) = \Pi_\xdomain^\prox(z_i(t + 1),\stepsize).
\end{equation*}
In this case, our analysis makes use of the modified definition $\Phi(t, s) =
\stochmat(s) \stochmat(s + 1) \cdots \stochmat(t)$. However, we still
have the evolution of $\bar{z}(t + 1) = \bar{z}(t) - \ninv
\sum_{i=1}^n g_i(t)$ from equation~\eqref{eqn:mean-z-update}, and
moreover, \eqref{eqn:bar-z-zi-diff} holds essentially unchanged:
\begin{equation}
  \label{eqn:bar-z-zi-diffNEW} \bar{z}(t) - z_i(t) = \sum_{s=1}^{t-1}
  \sum_{j=1}^n (1/n - [\Phi(t - 1, s)]_{ji}) g_j(s - 1) +
  \frac{1}{n} \sum_{j=1}^n \left(g_j(t-1) - g_i(t - 1)\right).
\end{equation}
To show convergence for the random communication model, we must
control the convergence of $\Phi(t, s)$ to the uniform distribution.
We first claim that
\begin{equation}
  \label{eqn:probability-phi}
  \P \big[\ltwo{\Phi(t, s) e_i - \onevec/n} \ge \epsilon \big] \le
  \epsilon^{-2} \lambda_2 \left(\E [P(t)^2]\right)^{t - s + 1},
\end{equation}
which we establish by recalling and modifying a few known
results~\cite{BoydGhPrSh06}.

Let $\Delta_n$ denote the $n$-dimensional probability simplex and
the vector $u(0) \in \Delta_n$ be arbitrary. Consider the random
sequence $\{u(t)\}_{t=0}^\infty$ generated by the recursion $u(t + 1)
= \stochmat(t) u(t)$.  Let $v(t) \defeq u(t) - \onevec/n$
correspond to the portion of $u(t)$ orthogonal to the all $1$s
vector.  Calculating the second moment of $v(t + 1)$, we have
\begin{align*}
  \E \big [\<v(t+1), v(t + 1)\> \mid v(t) \big] & = \E \big [v(t)^T
    \stochmat(t)^T \stochmat(t) v(t) \mid v(t) \big] = v(t)^T
  \E[\stochmat(t)^T \stochmat(t) ] v(t) \\ & \le \ltwo{v(t)}^2
  \lambda_2 \big( \E \stochmat(t)^T \stochmat(t) \big) = \ltwo{v(t)}^2
  \lambda_2(\E \stochmat(t)^2)
\end{align*}
since $\<v(t), \onevec\> = 0$, $v(t)$ is orthogonal to the first
eigenvector of $\stochmat(t)$, and $\stochmat(t)$ is symmetric.
Applying Chebyshev's inequality yields
\begin{align*}
  \P \Bigg[\frac{\|u(t) - \onevec/n \|_2}{\|u(0)\|_2} \geq \epsilon
  \Bigg] & \leq \frac{\E \|v(t)\|^2}{\|u(0)\|_2^2 \; \epsilon^2} \\
  & \leq \epsilon^{-2} \frac{\ltwo{v(0)}^2 \lambda_2 \big(\E
    \stochmat(t)^2\big)^t}{\ltwo{u(0)}^2}.
\end{align*}
Replacing $u(0)$ with $e_i$ and noting that $\|e_i - \onevec/n\|_2
\le 1$ yields the claim~\eqref{eqn:probability-phi}.

\subsubsection{Proof of Theorem~\ref{theorem:random-P-convergence}}
  
Using the claim~\eqref{eqn:probability-phi}, we now prove the main
theorem of this section, following an argument similar to the proof of
Theorem~\ref{theorem:simple-convergence}.  We begin by choosing a
(non-random) time index $\what{t}$ such that for $t - s \ge \what{t}$,
with exceedingly high probability, $\Phi(t, s)$ is close to the
uniform matrix $\onevec \onevec^T/n$.  We then break the summation
from $1$ to $T$ into two separate terms, separated by the cut-off
point $\what{t}$.  Throughout this derivation, we let $\lambda_2 =
\lambda_2(\E [\stochmat(t)^2])$, where we have suppressed the
dependence of $\lambda_2$ on graph structure $\graph$ to ease
notation.

Using the probabilistic bound~\eqref{eqn:probability-phi}, note that
\begin{equation*}
  t - s \ge \frac{3\log \epsilon^{-1}}{\log \lambda_2^{-1}} - 1
  \quad {\rm implies} \quad
  \P\big[\ltwo{\Phi(t, s) e_i - \onevec/n} \geq
  \epsilon \big] \leq \epsilon.
\end{equation*}
Consequently, if we make the choice
\begin{equation*}
  \what{t} \; \defeq \; \frac{3 \log (T^2 n)}{\log \lambda_2^{-1}} =
  \frac{6 \log T + 3\log n}{\log \lambda_2^{-1}} \leq \frac{6 \log T +
    3\log n}{1 - \lambda_2},
\end{equation*}
then we are guaranteed that if $t - s \ge \what{t} - 1$, then
\begin{align}
  \label{eqn:probability-phi-tn2}
  \P \big[\ltwo{\Phi(t, s) e_i - \onevec/n} \ge 1/(T^2n) \big] \leq
  (T^2 n)^2 \lambda_2^{\frac{3 \log (T^2 n)}{ -\log \lambda_2}} \; = \;
  (T^2n)^2 \big(e^{\log \lambda_2}\big)^{ \frac{\log (T^6n^3)}{-\log
      \lambda_2}} \; = \; \frac{1}{T^2n}.
\end{align}
Recalling the bound~\eqref{eqn:intermediate-bar-z-zi-bound}, we have
\begin{align}
  \dnorm{\bar{z}(t) - z_i(t)} & \le L \sum_{s=1}^{t-1}\lone{\Phi(t - 1,
    s) e_i - \onevec/n} + 2L \nonumber \\ & = L \sum_{s = t -
    \what{t}}^{t - 1} \lone{\Phi(t - 1, s) e_i - \onevec/n} + L
  \sum_{s = 1}^{t - 1 - \what{t}} \lone{\Phi(t - 1, s) e_i -
    \onevec/n} + 2L \nonumber \\ & \le 2L \frac{3 \log (T^2 n)}{1 -
    \lambda_2} + \underbrace{L \sqrt{n} \sum_{s=1}^{t - 1 - \what{t}}
    \ltwo{\Phi(t - 1, s) e_i - \onevec/n}}_{\term} + 2L.
  \label{eqn:diff-z-to-bound-with-prob}
\end{align}

It remains to bound the sum $\term$.  For any fixed pair $s' < s$,
since the matrices $\stochmat(t)$ are doubly stochastic, we have
\begin{align*}
  \ltwo{\Phi(t - 1, s') e_i - \onevec/n} & = \ltwo{\Phi(s-1, s')
    \Phi(t - 1, s) e_i - \onevec/n} \\
  & \leq \matrixnorm{\Phi(s-1, s')}_2 \ltwo{\Phi(t-1, s) e_i -
    \onevec/n} \\
  & \leq \ltwo{\Phi(t-1, s) e_i - \onevec/n},
\end{align*}
where the final inequality uses the bound
$\matrixnorm{\Phi(s-1, s')}_2 \le 1$.
From the bound~\eqref{eqn:probability-phi-tn2}, we have the bound
$\ltwo{\Phi(t - 1, t - \what{t} - 1) e_i - \onevec/n} \le
\frac{1}{T^2n}$ with probability at least $1 - 1/(T^2n)$.  Since $s$
ranges between $1$ and $t-\what{t}$ in the summation $\term$, we conclude
that
\begin{align*}
\term & \leq L \sqrt{n} \, T \; \frac{1}{T^2 n} \; = \frac{L \sqrt{n}}{T^2
  \numnode},
\end{align*}
and hence assuming that $n \ge 3$,
\begin{equation*}
  \dnorm{\bar{z}(t) - z_i(t)} \le L \frac{6 \log (T^2n)}{1 - \lambda_2}
  + L \sqrt{n} \frac{1}{Tn} + 2 L
\end{equation*}
with probability at least $1 - 1/(T^2n)$.
Applying the union bound over all iterations $t = 1, \ldots, T$ and
nodes $i = 1, \ldots, \numnode$, we obtain 
\begin{align*}
  \P \bigg[\max_{t \le T} \max_{i \le n} \dnorm{\bar{z}(t) - z_i(t)} >
    \frac{6 L \log (T^2n)}{1 - \lambda_2} + \frac{L}{T\sqrt{n}} + 2
    L \bigg]
  & \leq \frac{1}{T}.
\end{align*}
Recalling the master bound from
Theorem~\ref{theorem:master-convergence} completes the proof.

\vspace*{.1in}

\noindent
In the remainder of this section, we give some applications of the
stochastic framework outlined above, showing a few sampling schemes
and giving bounds on their convergence rates.


\subsection{Gossip-like protocols}
\label{SecGossip}

Gossip algorithms are procedures for achieving consensus in a network
robustly and quickly by randomly selecting one edge $(i, j)$ in the
network for communication at each iteration~\cite{BoydGhPrSh06}.  Once
nodes $i$ and $j$ are selected, their values are averaged. Gossip
algorithms drastically reduce communication in the network, yet they
still enjoy fast convergence and are robust to changes in topology.


\subsubsection{Partially asynchronous gossip protocols}

In a partially asynchronous iterative method, agents synchronize their
iterations~\cite{BertsekasTs89}. This is the model of standard gossip
protocols, where computation proceeds in rounds, and in each round
communication occurs on one random edge.  In our framework, this
corresponds to using the random transition matrix $\stochmat(t) = I -
\half(e_i - e_j)(e_i-e_j)^T$. It is clear that
$\stochmat(t)^T\stochmat(t) = \stochmat(t)$, since $\stochmat(t)$ is a
projection matrix.



Let $A$ be the adjacency matrix of the graph $\graph$ and $D$ be the
diagonal matrix of its degrees as in
Section~\ref{sec:communication-graph}.  At round $t$, edge $(i, j)$
(with $A_{ij} = 1$) is chosen with probability $1/ \<\onevec, A
\onevec\>$. Thus,
\begin{align}
  \E \stochmat(t) & = \frac{1}{\<\onevec, A\onevec\>}
  \sum_{(i,j) : A_{ij} = 1} I - \half(e_i - e_j)(e_i - e_j)^T
  = I - \frac{1}{\<\onevec, A\onevec\>}(D - A) \nonumber \\
  & = I - \frac{1}{\<\onevec, A\onevec\>} D^{1/2}(I - D^{-1/2} A D^{-1/2})D^{1/2}
  = I - \frac{1}{\<\onevec, A\onevec\>} D^{1/2} \laplacian D^{1/2}
  \label{eqn:expectation-gossip}
\end{align}
since $\sum_{(i,j) : A_{ij} = 1} (e_i - e_j)(e_i - e_j)^T = 2(D - A)$. Using
an identical argument as that for Lemma~\ref{lemma:stochmat-laplacian}, we see
that \eqref{eqn:expectation-gossip} implies that
\begin{equation*}
\lambda_2(\E \stochmat(t))
\le 1 - \frac{\min_i \degree_i}{\<\onevec, A\onevec\>}
\lambda_{n-1}(\laplacian).
\end{equation*}
Note that $\<\onevec, A \onevec\> = \<\onevec, D \onevec\>$, so that for
approximately regular graphs, $\<\onevec, A \onevec\> \approx n \dmax$, and
$\min_i \degree_i / \<\onevec, A \onevec\> \approx 1 / n$. Thus, at the
expense of a factor of roughly $1/n$ in convergence rate, we can reduce the
number of messages sent per round from the number of edges in the graph,
$\Theta(n \dmax)$, to one. In a clustered computing environment with some
centralized control, it is possible to select more than one edge per round so
long as no two edges share vertices (for example, by selecting a random
maximal matching) and still have $\stochmat(t)^T \stochmat(t) =
\stochmat(t)$. For a $\degree$-regular graph, choosing a random maximal
matching achieves a spectral gap within constant factors of the spectral gap
of the underlying graph but uses only $\Theta(1/\degree)$ as much
communication.

\subsubsection{Totally asynchronous gossip protocol}

Now we relax the assumption that agents have synchronized clocks, so
the iterations of the algorithm are no longer synchronized. Suppose
that each agent has a random clock ticking at real-valued times, and
at each clock tick, the agent randomly chooses one of its neighbors to
communicate with. Further assume that each agent computes an iterative
approximation to $g_i \in \partial f_i(x_i(t))$, and that the
approximation is always unbiased (an example of this is when $f_i$ is
the sum of several functions, and agent $i$ simply computes the
subgradient of each function sequentially). We assume that no two
agents have clocks tick at the same time.  This communication
corresponds to a gossip protocol with stochastic subgradients, and its
convergence can be described simply by combining
\eqref{eqn:expectation-gossip} with
Theorem~\ref{theorem:stochastic-gradient}. This type of algorithm is
well-suited to completely decentralized environments, such as sensor
networks.

\subsection{Random edge inclusion and failure}
\label{SecEdgeFail}

The two communication ``protocols'' we analyze now make selection of
each edge at each iteration of the algorithm independent. We begin
with random edge inclusions and follow by giving convergence
guarantees for random edge failures. For both protocols, since
computation of $\E \stochmat(t)^2$ is in general non-trivial, we work
with the model of lazy random walks described in
Section~\ref{sec:communication-graph}. In the lazy random walk model, the
communication matrix at each round is $\half I + \half \stochmat(t)$,
which is symmetric PSD since $\sigma_1(\stochmat(t)) \le 1$. Further,
for any symmetric PSD stochastic matrix $\stochmat$, $\stochmat^2
\preceq \stochmat$.  With that in mind, we see that $\E(\half I +
\half \stochmat(t))^2 \preceq \half I + \half \E \stochmat(t)$, and
applying Weyl's Theorem for the eigenvalues of a Hermitian
matrix~\cite[Theorem 4.3.1]{HornJo85},
\begin{equation}
  \lambda_2\bigg(\E \Big(\half I + \half \stochmat(t)\Big)^2\bigg)
  \le \lambda_2\bigg(\half I + \half \E \stochmat(t)\bigg)
  = \half + \half \lambda_2(\E \stochmat(t)).
  \label{eqn:weyls-stochmatrices}
\end{equation}
Thus any bound on $\lambda_2(\E \stochmat(t))$ provides an upper bound
on the convergence rate of the distributed dual averaging algorithm
with random communication, as in
Theorem~\ref{theorem:random-P-convergence}.

Consider the communication protocol in which with probability $1 -
\degree_i / (\dmax + 1)$, node $i$ does not communicate, and otherwise
the node picks a random neighbor. If a node $i$ picks a neighbor $j$,
then $j$ also communicates back with $i$ to ensure double
stochasticity of the transition matrix. We let $A(t)$ be the random
adjacency matrix at time $t$.  When there is an edge $(i, j)$ in the
underlying graph, the probability that node $i$ picks edge $(i, j)$ is
$1/(\dmax + 1)$, and thus $\E A(t)_{ij} =
\frac{2\dmax+1}{(\dmax+1)^2}$. The random communication matrix is
$\stochmat(t) = I - (\dmax+1)^{-1}(D(t) - A(t))$. Let $A$ and $D$ be
the adjacency matrix and degree matrix of the underlying
(non-stochastic) graph and $\stochmat$ be communication matrix defined
in \eqref{eqn:mod-stochmat}. With these definitions, $\E
A(t) = \frac{2\dmax + 1}{(\dmax + 1)^2} A$, $\E D(t) = \frac{2 \dmax +
1}{(\dmax + 1)^2} D$, and $A - D = (\stochmat - I)(\dmax + 1)$. We
have
\begin{equation*}
  \E\stochmat(t) = I - (\dmax+1)^{-1}(\E D(t) - \E A(t))
  = \left(\frac{\dmax}{\dmax+1}\right)^2I +
  \frac{2\dmax+1}{(\dmax+1)^2}\stochmat,
\end{equation*}
and hence
\begin{equation*}
  1-\lambda_2(\E\stochmat(t)) =
  \frac{2\dmax+1}{(\dmax+1)^2}(1-\lambda_2(\stochmat)).
\end{equation*}
Using \eqref{eqn:weyls-stochmatrices}, we see that the spectral gap decreases
(and hence convergence rate may slow) by a factor proportional to the maximum
degree in the graph. This is not surprising, since the amount of communication
performed decreases by the same factor.

A related model we can analyze is that of a network in which at every
time step of the algorithm, an edge fails with probability $\failprob$
independently of the other edges. We assume we are using the model of
communication in the prequel, so $\stochmat(t) = I - (\dmax + 1)^{-1}
D(t) + (\dmax + 1)^{-1} A(t)$.  Let $A$, $D$, and $\stochmat$ be as
before and $\laplacian$ be the Laplacian of the underlying graph; we
easily have
\begin{equation*}
\E \stochmat(t) = I - \frac{1 - \failprob}{\dmax + 1} D - \frac{1 -
\failprob}{\dmax + 1} A = I - \frac{1 - \failprob}{\dmax + 1} D^{1/2}
\laplacian D^{1/2} = \failprob I + (1 - \failprob) \stochmat
\end{equation*}
and $\lambda_2(\E \stochmat(t)) = \rho + (1 -
\rho)\lambda_2(\stochmat)$. Applying
\eqref{eqn:simplified-stochmat-convergence}, we see that we lose at
most a factor of $\sqrt{1-\rho}$ in the convergence rate.


\section{Stochastic Gradient Optimization}
\label{sec:stoch-gradient}

In this section, we show that the algorithm we have presented naturally
generalizes to the case in which the agents do not receive true subgradient
information but only an unbiased estimate of a subgradient of $f_i$. That is,
during round $t$ agent $i$ receives a vector $\what{g}_i(t)$ with $\E
\what{g}_i(t) = g_i(t) \in \partial f_i(x_i(t))$. The proof is made
significantly easier by the dual averaging algorithm, which by virtue
of the simplicity of its dual update smooths the propagation of errors
from noisy estimates of individual 
subgradients throughout the network. This was a difficulty in
prior work, where significant care was needed in the analysis to address
passing noisy gradients through nonlinear projections~\cite{RamNeVe10}.


\subsection{Proof of Theorem~\ref{theorem:stochastic-gradient}}
We begin by using convexity and the Lipschitz continuity of the $f_i$
(see equations~\eqref{eqn:dist-lazy-to-bound}
and~\eqref{eqn:first-order-dist-bound}), thereby obtaining
that the running sum $S(T) = \sum_{t=1}^T f(y(t)) - f(x^*)$ is upper bounded
as
\begin{align} 
  S(T) & \le \sum_{t=1}^T \ninv \sum_{i=1}^n \<g_i(t), x_i(t) - x^*\> +
  \sum_{t=1}^T \sum_{i=1}^n L \norm{y(t) - x_i(t)} \nonumber \\
  & = \sum_{t=1}^T \ninv \sum_{i=1}^n \<\what{g}_i(t), x_i(t) - x^*\>
  + \sum_{t=1}^T \ninv \sum_{i=1}^n L \norm{y(t) - x_i(t)} +
  \sum_{t=1}^T \ninv \sum_{i=1}^n \<g_i(t) - \what{g}_i(t), x_i(t) -
  x^*\>.
  \label{eqn:stochastic-to-bound}
\end{align}
We bound the first two terms of \eqref{eqn:stochastic-to-bound} using the
same derivation as that for
Theorem~\ref{theorem:master-convergence}. In particular, $\sum_{i=1}^n
\<\what{g}_i(t), x_i(t) - x^*\> = \sum_{i=1}^n \<\what{g}_i(t), y(t) - x^*\>
+ \sum_{i=1}^n \<\what{g}_i(t), x_i(t) - y(t)\>$, and nothing in
Lemma~\ref{lemma:ftrl-linear} assumes that $\what{g}_i(t)$ is related to
$f_i(x_i(t))$. So we upper bound the first term in
\eqref{eqn:stochastic-to-bound} with
\begin{align}
  \frac{1}{\stepsize(T)} \prox(x^*)
  + \half \sum_{t=1}^T \stepsize(t-1)\dnormb{\ninv
    \sum_{i=1}^n \what{g}_i(t)}^2
  + \sum_{t=1}^T \ninv \sum_{i=1}^n \<\what{g}_i(t), x_i(t) - y(t)\>.
  \label{eqn:intermediate-stochastic-bound}
\end{align}
H\"older's inequality implies that
$\E[\dnorm{\what{g}_i(t)}\dnorm{\what{g}_j(s)}] \le L^2$ and
$\E\dnorm{\what{g}_i(t)} \le L$ for any $i, j, s, t$. We use the two
inequalities to bound \eqref{eqn:intermediate-stochastic-bound}.
We have
\begin{equation*}
  \E \dnormb{\ninv\sum_{i=1}^n \what{g}_i(t)}^2
  \le \frac{1}{n^2}\sum_{i,j=1}^n \E
  \left[\dnorm{\what{g}_i(t)}\dnorm{\what{g}_j(t)}\right]
  \le L^2.
\end{equation*}
Further, $x_i(t) \in \mc{F}_{t-1}$ and $y(t) \in \mc{F}_{t-1}$ 
by assumption for $j \in [n]$ and $s \le t - 1$,
so
\begin{equation*}
  \E \<\what{g}_i(t), x_i(t) - y(t)\>
  \le \E \dnorm{\what{g}_i(t)}\norm{x_i(t) - y(t)}
  = \E \left(\E[\dnorm{\what{g}_i(t)} \mid \mc{F}_{t-1}] \norm{x_i(t) - y(t)}
  \right)
  \le L \E \norm{x_i(t) - y(t)}.
\end{equation*}
Recalling that $\norm{x_i(t) - y(t)} \le \stepsize(t) \dnorm{\bar{z}(t) -
  z_i(t)}$, we proceed by putting expectations around the norm terms in
\eqref{eqn:intermediate-bar-z-zi-bound} and \eqref{eqn:fixed-p-convergence}
to see that
\begin{equation*}
  \frac{1}{\stepsize(t)}\E \norm{y(t) - x_i(t)}
  \le \E \dnorm{\bar{z}(t) - z_i(t)}
  \le \sum_{s=1}^{t-1} L \lone{[\Phi(t - 1, s)]_i - \onevec/n}
  + 2 L
  \le L \frac{\log(T \sqrt{n})}{1 - \sigma_2(\stochmat)}
  + 3 L.
\end{equation*}

Coupled with the above arguments, we can bound the expectation of
\eqref{eqn:stochastic-to-bound} by
\begin{align}
  \E \left[\sum_{t=1}^T f(y(t)) - f(x^*) \right]
  & \le \frac{1}{\stepsize(T)} \prox(x^*) + \frac{L^2}{2}\sum_{t=1}^T
  \stepsize(t-1)
  + \left(2 L^2 \frac{\log(T \sqrt{n})}{1 - \sigma_2(\stochmat)}
  + 6 L^2\right)\sum_{t=1}^T \stepsize(t) \nonumber \\
  & ~~~~ + \ninv\sum_{t=1}^T \sum_{i=1}^n
  \E\left[\<g_i(t) - \what{g}_i(t), x_i(t) - x^*\>\right].
  \label{eqn:stochastic-martingale-to-bound}
\end{align}
Taking the expectation for the final term in the
bound~\eqref{eqn:stochastic-martingale-to-bound}, we recall
that $x_i(t) \in \mc{F}_{t-1}$, so
\begin{align}
  \E\left[\<g_i(t) - \what{g}_i(t), x_i(t) - x^*\> \right] & = \E
  \left[\E \left[ \<g_i(t) - \what{g}_i(t), x_i(t) - x^*\> \mid
      \mc{F}_{t-1} \right]\right] \nonumber \\ & = \E \left[
    \<\E(g_i(t) - \what{g}_i(t) \mid \mc{F}_{t-1}), x_i(t) - x^*\>
    \right] = 0,
  \label{eqn:expectation-stochgrad-zero}
\end{align}
which completes the proof of the first statement of the theorem.

To show that the statement holds with high-probability when $\xdomain$
is compact and $\dnorm{\what{g}_i(t)} \le L$, it is sufficient to
establish that the sequence $\langle g_i(t) - \what{g}_i(t), x_i(t) -
x^*\rangle$ is a bounded martingale, and then apply Azuma's
inequality~\cite{Azuma67}.  (Here we are exploiting the fact that
under compactness and bounded norm conditions, our previous bounds on
terms in the 
decomposition~\eqref{eqn:intermediate-stochastic-bound} now hold for
the analogous terms in the
decomposition~\eqref{eqn:stochastic-martingale-to-bound} without
taking expectations.)

By assumption on the compactness of $\xdomain$ and the
Lipschitz assumptions on $f_i$, we have
\begin{equation*}
  \<g_i(t) - \what{g}_i(t), x_i(t) - x^*\> \le \dnorm{g_i(t) -
    \what{g}_i(t)} \norm{x_i(t) - x^*} \le 2 L \radius.
\end{equation*}
Recalling \eqref{eqn:expectation-stochgrad-zero},
we conclude that the last sum in the
decomposition~\eqref{eqn:stochastic-martingale-to-bound} is a bounded
difference martingale, and Azuma's inequality implies that
\begin{equation*}
  \P \bigg[\sum_{t=1}^T \sum_{i=1}^n \<g_i(t) - \what{g}_i(t), x_i(t) -
  x^*\> \ge \epsilon \bigg] \leq \exp \big(-\frac{\epsilon^2}{16 T n^2
  L^2 \radius^2} \big).
\end{equation*}
Dividing by $T$ and setting the probability above equal to $\delta$,
we obtain that with probability at least $1 - \delta$,
\begin{align*}
  \frac{1}{nT}\sum_{t=1}^T \sum_{i=1}^n \<g_i(t) - \what{g}_i(t), x_i(t)
  - x^*\> \le 4L\radius \sqrt{\frac{\log \frac{1}{\delta}}{T}}.
\end{align*}
The second statement of the theorem is now obtained by appealing to
Lemma~\ref{lemma:transfer-y-to-x}.  By convexity, we have
$f(\what{x}_i(T)) \le \frac{1}{T} \sum_{t=1}^T f(x_i(t))$, thereby
completing the proof.

Proving the last statement of the theorem---the concentration result with
uncorrelated noise at each node---requires a martingale extension of
Bernstein's inequality~\cite{Freedman75}. Indeed, one form of Freedman's
inequality states that if $X_1, \ldots, X_T$ is a martingale difference
sequence, $|X_i| \le B$ uniformly, and $V \ge \sum_{t=1}^T \Var(X_t
\mid \mc{F}_{t-1})$, then for any $v, \epsilon > 0$,
\begin{equation*}
  \P\left(\sum_{t=1}^T X_t \ge \epsilon
  ~~ {\rm and} ~~
  V \le v\right) \le \exp\left(-\frac{\epsilon^2}{2 v + 2B  \epsilon / 3}
  \right).
\end{equation*}
To extend the above bound to our setting, we recall that
$\ninv \sum_{i=1}^n \<g_i(t) - \what{g}_i(t), x_i(t) - x^*\>$ is Martingale
difference sequence uniformly bounded by $2 L\radius$. Further, since
the expectation is zero, we have
\begin{align}
  \lefteqn{\Var\bigg(\ninv \sum_{i=1}^n \<g_i(t) - \what{g}_i(t),
    x_i(t) - x^*\> \mid \mc{F}_{t-1} \bigg)} \nonumber \\
  & = \E \bigg[\frac{1}{n^2} \sum_{i, j}^n
    \<g_i(t) - \what{g}_i(t), x_i(t) - x^*\>\<g_j(t) - \what{g}_j(t),
    x_j(t) - x^*\> \mid \mc{F}_{t-1} \bigg]
  \nonumber \\
  & = \frac{1}{n^2} \E \bigg[\sum_{i=1}^n \<g_i(t) - \what{g}_i(t),
    x_i(t) - x^*\>^2 \mid \mc{F}_{t-1} \bigg]
  \label{eqn:decorrelate-martingale-almost} \\
  & \le \frac{1}{n^2} \sum_{i=1}^n 4 L^2 \radius^2 = \frac{4L^2\radius^2}{n}.
  \nonumber 
\end{align}
The decorrelation equality in \eqref{eqn:decorrelate-martingale-almost}
follows by our assumption that $\what{g}_i(t)$ and $\what{g}_j(t)$ are
uncorrelated given $\mc{F}_{t-1}$, and that $x_i(t)$, $g_i(t)$, and $x^* \in
\mc{F}_{t-1}$. Substituting $4 T L^2 \radius^2 / n$ as an upper bound
for the variance in Freedman's inequality, we have
\begin{equation*}
  \P \left(\ninv \sum_{t=1}^T \sum_{i=1}^n
  \<g_i(t) - \what{g}_i(t), x_i(t) - x^*\> \ge \epsilon\right)
  \le \exp\left(-\frac{\epsilon^2}{8 T L^2 \radius^2 / n +
    8 L\radius \epsilon / 3}\right).
\end{equation*}
To find a $\delta$ so that $\exp(\cdot)$ term is less than or equal to
$\delta$, we solve
\begin{equation}
  \delta \ge \exp\left(-\frac{\epsilon^2}{8 L^2\radius^2 / n +
    8 L\radius \epsilon/3} \right)
  ~~~ {\rm or} ~~~
  \epsilon^2 - \epsilon \frac{8 L \radius\log \frac{1}{\delta}}{3}
  - \frac{8 T L^2 \radius^2 \log \frac{1}{\delta}}{n}
  \ge 0.
  \label{eqn:decorrelate-solve-probability}
\end{equation}
Solving the above quadratic in $\epsilon$, we have equality with zero for
\begin{equation*}
  \epsilon = \frac{(8/3) L\radius \log\frac{1}{\delta} \pm
    \sqrt{(8/3)^2 L^2 \radius^2 \log^2 \frac{1}{\delta}
      + (32/n) T L^2 \radius^2  \log \frac{1}{\delta}}}{2}
\end{equation*}
In particular, noting that $\sqrt{a + b} \le \sqrt{a} + \sqrt{b}$, it is
sufficient that
\begin{equation*}
  \epsilon \ge \frac{4}{3} L\radius \log \frac{1}{\delta}
  + \frac{\sqrt{6}}{3} L\radius \log \frac{1}{\delta}
  + 2 \sqrt{2} L \radius \sqrt{\frac{T}{n} \log \frac{1}{\delta}}
\end{equation*}
for the inequality in \eqref{eqn:decorrelate-solve-probability} to be
satisfied. Thus with probability at least $1 - \delta$,
\begin{equation*}
  \ninv \sum_{t=1}^T \sum_{i=1}^n
  \<g_i(t) - \what{g}_i(t), x_i(t) - x^*\>
  < 3 L\radius \log \frac{1}{\delta}
  + 4 L\radius \sqrt{\frac{T}{n} \log \frac{1}{\delta}}.
\end{equation*}
Dividing by $T$ completes the proof of the last statement of
Theorem~\ref{theorem:stochastic-gradient}.

\section{Simulations}
\label{sec:simulations}

In this section, we report experimental results on the network scaling
behavior of the distributed dual averaging algorithm as a function of
the graph structure and number of processors $n$.  These results
illustrate the excellent agreement of the empirical behavior with our
theoretical predictions.

\begin{figure}[h]
  \begin{center}
    \includegraphics[width=0.45\columnwidth]{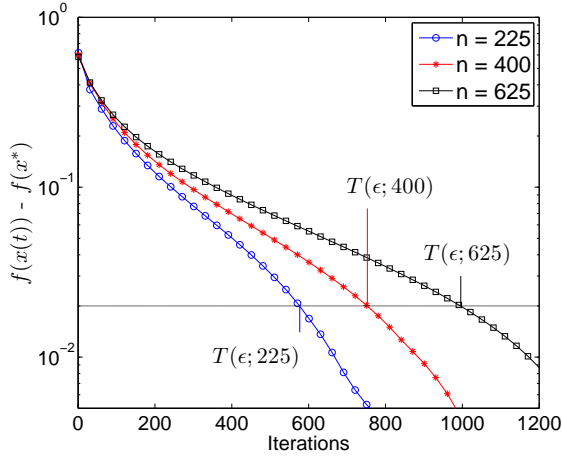}
    \caption{\label{fig:convergence} Plot of the function error versus
the number of iterations for a grid graph.  Each curve corresponds to
a grid with a different number of nodes ($n \in \{225, 400, 600 \}$.
As expected, larger graphs require more iterations to reach a
pre-specified tolerance $\epsilon > 0$, as defined by the iteration
number $T(\epsilon; n)$.  The network scaling problem is to determine
how $T(\epsilon; n)$ scales as a function of $n$.
 }
  \end{center}
\end{figure}

For all experiments reported here, we consider distributed
minimization of a sum of hinge loss functions; it is this optimization
problem that underlies the widely-used support vector machine method
for classification~\cite{CortesVa95}. In a classification problem, we
are given $n$ pairs of the form $(b_i, y_i) \in \real^d \times \{-1,
+1\}$, where $b_i \in \real^d$ corresponds to a feature vector and
$y_i \in \{-1, +1\}$ is the associated label.  The goal is to use
these samples to estimate a linear classifier, meaning a function of
the form $b \mapsto \sign \<b,x\>$ based on some weight vector $x \in
\real^d$.  In methods based on support vector machines, the weight
vector is chosen by minimizing a sum of hinge loss functions
associated with each pair $(b_i, y_i)$.  In particular, given the
shorthand notation $\hinge{c} \defeq \max\{0, c\}$, the hinge loss
associated with a linear classifier based on $x$ is given by $f_i(x) =
\hinge{1 - y_i \, \<b_i, x\>}$.  The global objective is a sum of $n$
such terms, namely
\begin{equation}
\label{EqnSumHinge}
  f(x) \; \defeq \; \frac{1}{n} \sum_{i=1}^n \hinge{1 - y_i \<b_i, x\>}.
\end{equation}
Setting $L = \max_i \|b_i\|_2$, we note that $f$ is
$L$-Lipschitz and non-smooth at any point with $\<b_i, x\> =
y_i$.  It is common to impose some type of quadratic constraint on the
minimization problem~\eqref{EqnSumHinge}, and for the simulations
considered here, we set $\xdomain = \{x \in \real^d \, \mid \,
\ltwo{x} \le 5\}$.  For a given graph size $n$, we form a random
instance of a SVM classification problem as follows.  For each $i = 1,
2, \ldots, n$, we first draw a random vector $b_i \in \R^d$ from the
uniform distribution over the unit sphere.  We then randomly generate
a random Gaussian vector $w \sim N(0, I_{d \times d})$, and then let
$a_i = \sign(\<w, b_i\>) b_i$, randomly flipping the sign of 5\% of
the $a_i$.  Note that these choices yield a function $f$ that is
Lipschitz with parameter $L = 1$. Although this is a specific ensemble
of problems, we have observed qualitatively similar behavior for other
problem ensembles.  In order to study the effect of graph size and
topology, we perform simulations with three different graph
structures, namely cycles, grids, and random $5$-regular
expanders~\cite{FriedmanKaSz89}, with the number of nodes $n$ ranging
from $100$ to $900$.  In all cases, we use the optimal setting of the
step size $\stepsize$ specified in
Theorem~\ref{theorem:simple-convergence} and
Corollary~\ref{corollary:graph-catalog}. \\

\begin{figure}[t]
  \begin{center}
    \begin{tabular}{ccc}
      \hspace{-.5cm}
      \includegraphics[width=.34\columnwidth]{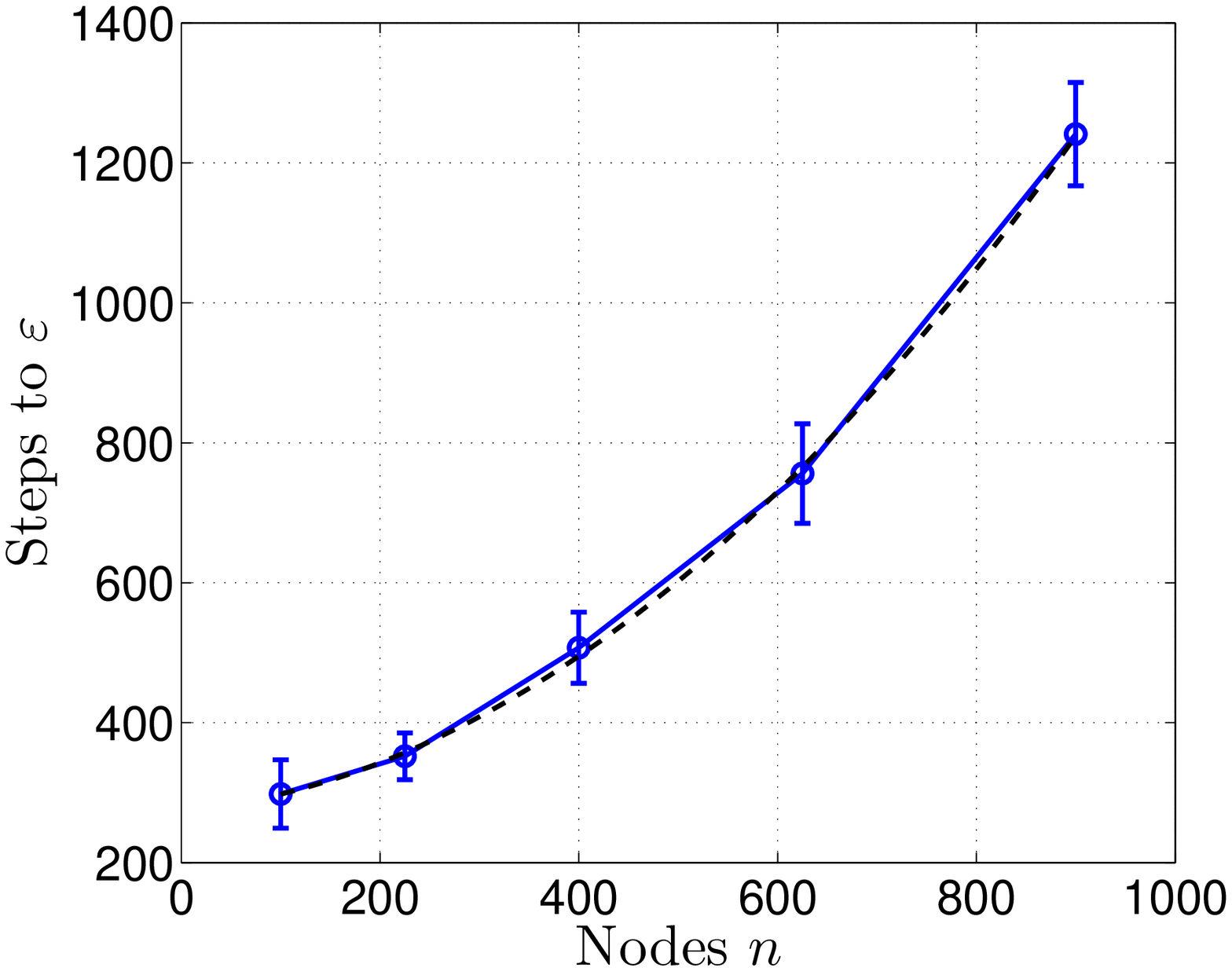} &
      \hspace{-.6cm}
      \includegraphics[width=.34\columnwidth]{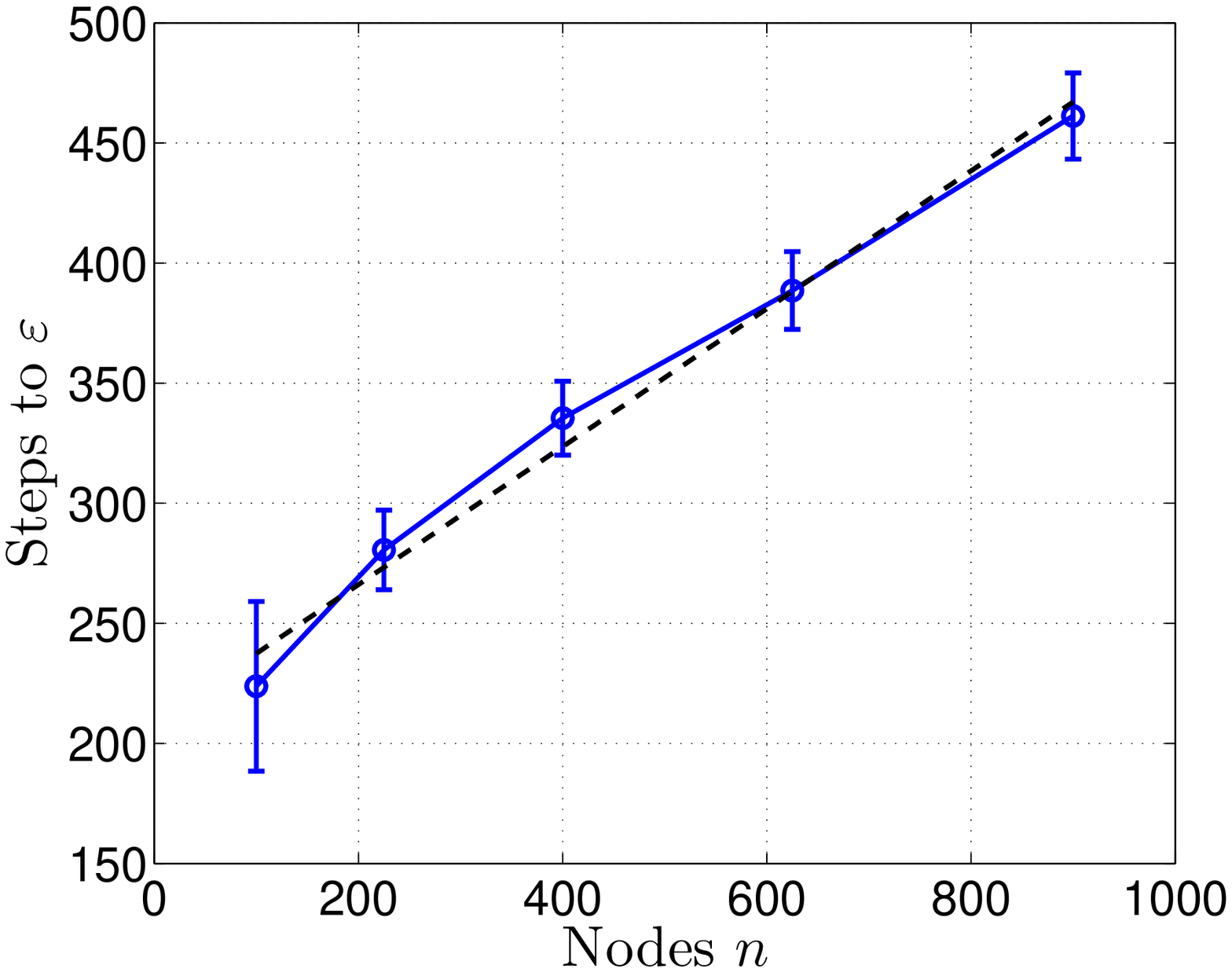} &
      \hspace{-.6cm}
      \includegraphics[width=.34\columnwidth]{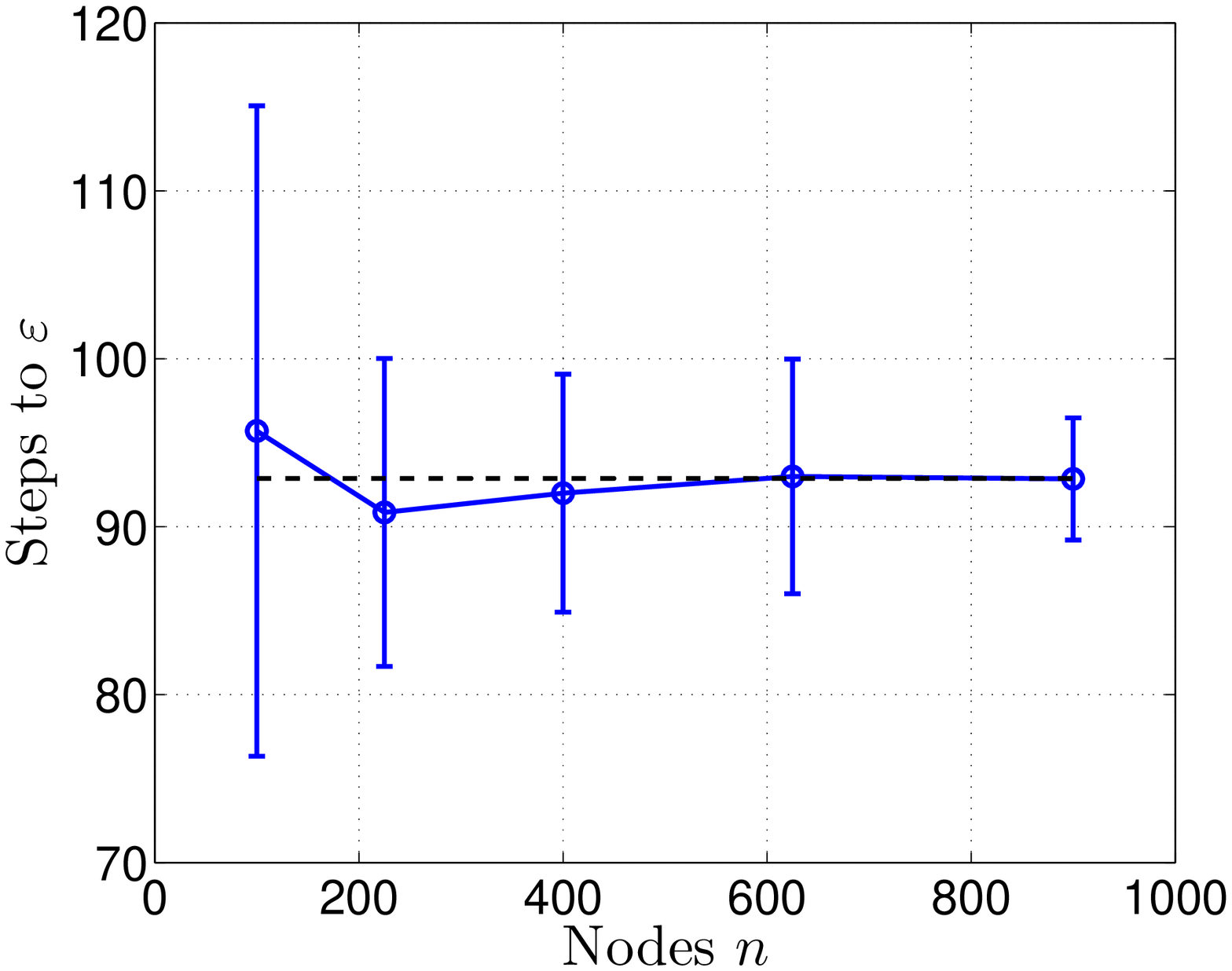} \\
      (a) & (b) & (c)
    \end{tabular}
    \caption{\label{fig:spectral-scale} Each plot shows the number of
      iterations required to reach a fixed accuracy $\epsilon$
      (vertical axis) versus the network size $n$ (horizontal axis).
      Each panel shows the same plot for a different graph topology:
      (a) single cycle; (b) two-dimensional grid; and (c) bounded
      degree expander.  Step sizes were chosen
      according to the spectral gap, and dotted lines show predictions
    of Corollary~\ref{corollary:graph-catalog}.}
  \end{center}
\end{figure}

Figure~\ref{fig:convergence} provides plots of the function error
$\max_i [f(\what{x}_i(T) - f(x^*)]$ versus the number of iterations
for grid graphs with a varying number of nodes $n \in \{225, 400, 625
\}$.  In addition to demonstrating
convergence, these plots also show how the convergence time scales as
a function of the graph size $n$.  In particular, for a given class of
optimization problems, define $\Tspec{\epsilon}{\numnode}$ to be the
number of iterations required to obtain $\epsilon$-accurate solution
for a graph $\graph$ with $n$ nodes.  As shown in
Figure~\ref{fig:convergence}, for any fixed $\epsilon > 0$, the
function $\Tspec{\epsilon}{n}$ shifts to the right as $n$ is
increased, and the goal of network scaling analysis is to gain a
precise understanding of this shifting.

As discussed following Corollary~\ref{corollary:graph-catalog}, for
cycles, grids, and expanders, we have the following upper bounds on
the quantity $\Tspec{\epsilon}{\numnode}$:
\begin{equation}
\label{EqnTheoretical}
  T_{\rm cycle}(\epsilon;\numnode) =
  \order\left(\frac{\numnode^2}{\epsilon^2}\right), ~~~ T_{\rm
  grid}(\epsilon; \numnode) =
  \order\left(\frac{\numnode}{\epsilon^2}\right), ~~~ {\rm and} ~~~
  T_{\rm expander}(\epsilon; \numnode) =
  \order\left(\frac{1}{\epsilon^2}\right).
\end{equation}
In Figure~\ref{fig:spectral-scale}, we compare these theoretical
predictions with the actual behavior of dual subgradient averaging.
Each panel shows the function $\Tspec{\epsilon}{\numnode}$ versus the
graph size $\numnode$ for the fixed value $\epsilon = 0.1$; the three
different panels correspond to different graph types: cycles (a),
grids (b) and expanders (c).  In each panel, each point on the blue
curve is the average of $\NUMSIM$ trials, and the bars show standard
errors. For
comparison, the dotted black line shows the theoretical
prediction~\eqref{EqnTheoretical}.  Note that the agreement between the
empirical behavior and theoretical predictions is excellent in all
cases.  In particular, panel (a) exhibits the quadratic scaling
predicted for the cycle, panel (b) exhibits the the linear scaling
expected for the grid, and panel (c) shows that expander graphs have
the desirable property of having constant network scaling. \\


Though our focus in this paper is mostly a theoretical one, in our
final set of experiments we compare the distributed dual averaging
method (DDA) that we present to the Markov incremental gradient
descent (MIGD) method~\cite{JohanssonRaJo09} and the distributed
projected gradient method~\cite{RamNeVe10}, which seem to have the
sharpest convergence rates currently in the literature. In
Figure~\ref{fig:dda-vs-migd}, we plot the quantity
$\Tspec{\epsilon}{\numnode}$ versus graph size $\numnode$ for DDA and
MIGD on grid and expander graphs. We use the optimal stepsize
$\stepsize(t)$ suggested by the analyses for each method. (We do not
plot results for the distributed projected gradient
method~\cite{RamNeVe10} because the optimal choice of stepsize
according to the analysis therein results in such slow convergence
that it does not fit on the plots.) Fig.~\ref{fig:dda-vs-migd} makes
it clear that---especially on graphs with good connectivity properties
such as the expander in Fig.~\ref{fig:dda-vs-migd}(b)---the dual
averaging algorithm gives improved performance.

\begin{figure}[t]
  \begin{center}
    \begin{tabular}{cc}
      \includegraphics[width=.45\columnwidth]{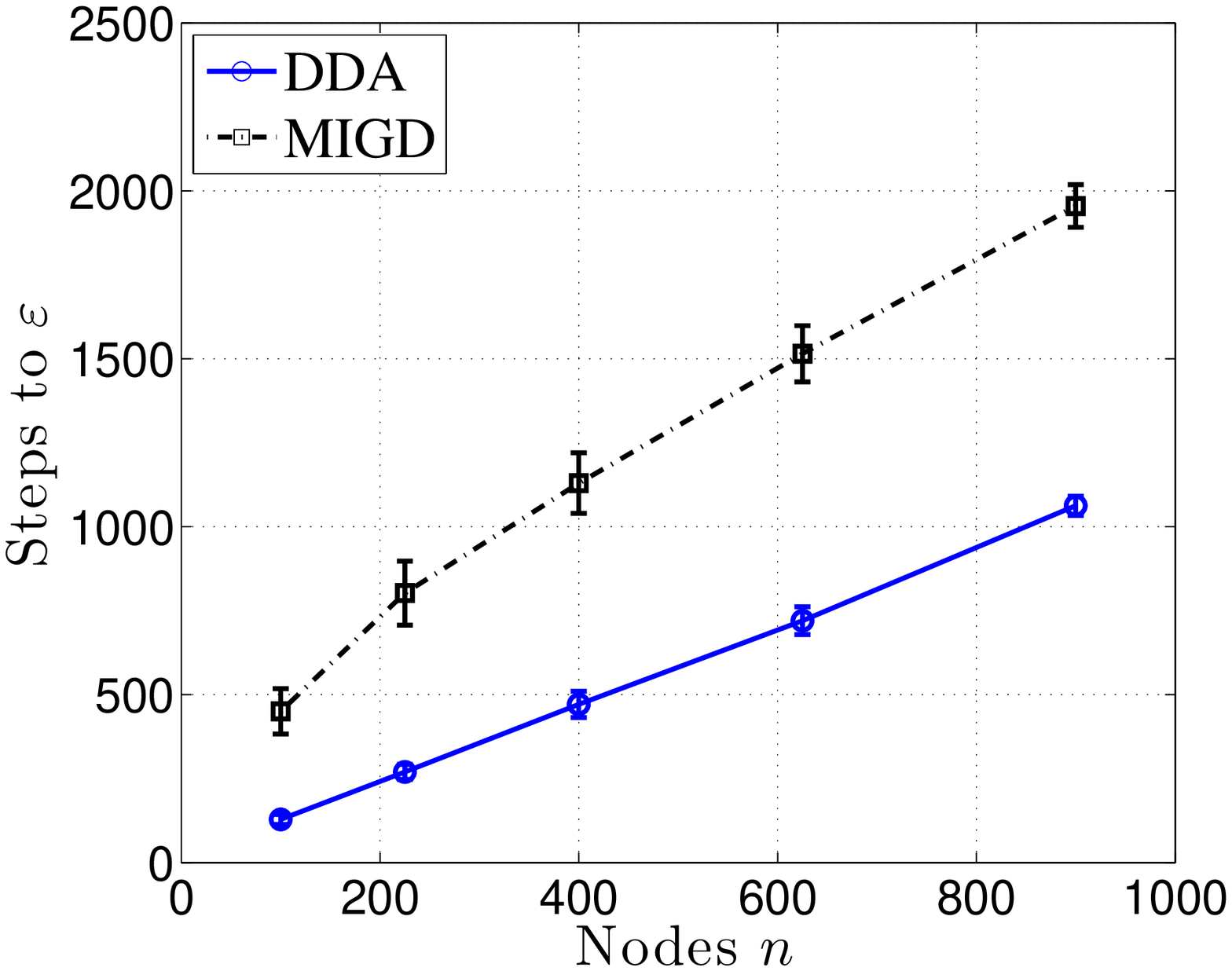} &
      \includegraphics[width=.45\columnwidth]{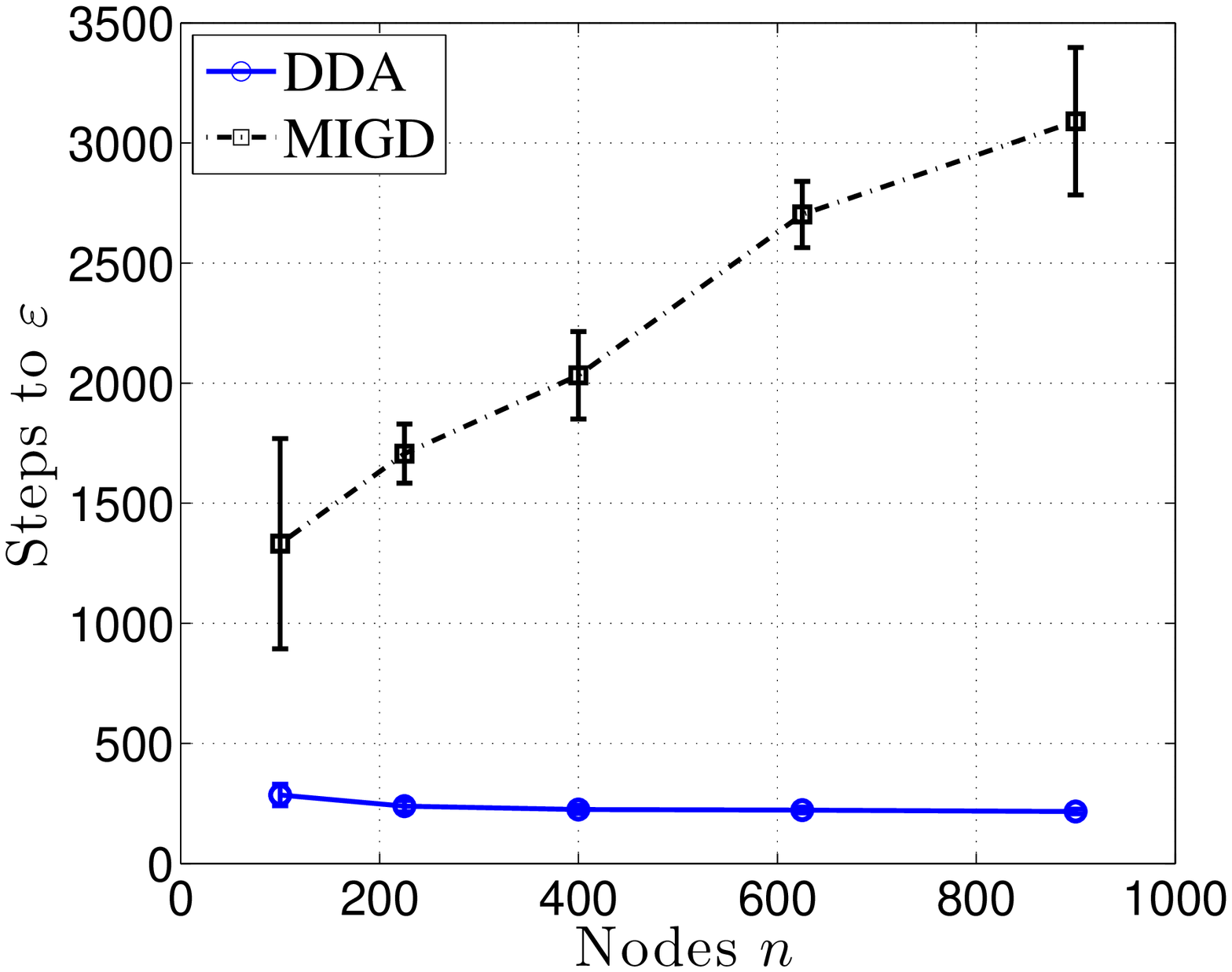} \\
      (a) & (b)
    \end{tabular}
    \caption{ \label{fig:dda-vs-migd} Each plot shows the number of
      iterations required to reach a fixed accuracy $\epsilon$
      (vertical axis) versus network size $\numnode$ (horizontal axis)
      for distributed dual averaging (DDA) and Markov incremental
      gradient descent (MIGD)~\cite{JohanssonRaJo09}. The panels show the
      same plot for different graph topologies: (a) two-dimensional grid,
      (b) bounded degree expander.}
  \end{center}
  \vspace{-.5cm}
\end{figure}

\section{Conclusions and Discussion}

In this paper, we proposed and analyzed a distributed dual averaging
algorithm for minimizing the sum of local convex functions over a
network. It is computationally efficient, and we provided a sharp
analysis of its convergence behavior as a function of the properties
of the optimization functions and the underlying network topology.
Our analysis demonstrates a close connection between convergence rates
and mixing times of random walks on the underlying graph; such a
connection is natural given the local and graph-constrained nature of
our updates.  In addition to analysis of deterministic updates, our
results also include the case of stochastic communication protocols,
for instance when communication occurs only along a random subset of
the edges at each round. Such extensions allow for the design of
protocols that provide interesting tradeoffs between the amount of
communication and convergence rates. We also demonstrate that our
algorithm is robust to noise by providing an analysis for the case of
stochastic optimization with noisy gradients.  We confirmed the
sharpness of our theoretical predictions by implementation and
simulation of our algorithm.

There are several interesting open questions that remain to be
explored. For instance, it would be interesting to analyze the
convergence properties of other kinds of network-based optimization
problems, by combining local information in different structures. It
would also be of interest to study what other optimization procedures
from the standard setting can be converted into efficient distributed
algorithms to better exploit problem structure when possible.

\subsection*{Acknowledgments}

AA was supported by a Microsoft Research Fellowship and JCD was
supported by the National Defense Science and Engineering Graduate
Fellowship (NDSEG) Program.  In addition, MJW was partially supported
by NSF-CAREER-0545862 and AFOSR-09NL184. We thank John Tsitsiklis for
a careful reading of the paper and providing helpful comments and
several anonymous reviewers for useful feedback.

\appendix

\section{The Dual Averaging Algorithm}
\label{app:lazy}

In this section, we give a simple convergence proof for the basic
(non-distributed) dual averaging algorithm~\eqref{eqn:standard-lazy}.
In particular, we recall the updates
\begin{equation*}
  z(t + 1) = z(t) + g(t) ~~~ {\rm and} ~~~ x(t + 1) =
  \argmin_{x \in \xdomain}
  \Big\{ \<z(t + 1), x\> + \frac{1}{\stepsize(t)} \prox(x)
  \Big\}.
\end{equation*}
Recall our assumptions without any loss of generality that $0 \in \xdomain$,
$\prox \ge 0$ and $\prox(0) = 0$.  Let $\bindic{x \in \xdomain}$ be the
$\{0,\infty\}$-valued indicator function for membership in $\xdomain$, and for
each $\stepsize > 0$, let $\proxdual_\stepsize$ denote the conjugate dual of
the convex function $\frac{1}{\stepsize}\prox(x) + \bindic{x \in \xdomain}$.
By definition, the conjugate takes the form
\begin{equation}
  \label{eqn:prox-dual}
  \proxdual_{\stepsize}(z) = \sup_{x \in \xdomain} \Big\{\<z, x\> -
  \frac{1}{\stepsize}\prox(x) \Big\}
  = \sup_x \Big\{ \<z, x\> -
  \frac{1}{\stepsize}\prox(x) - \bindic{x \in \xdomain} \Big\}.
\end{equation}
The definition~\eqref{eqn:projection} of the projection $\Pi_\xdomain^\prox$
shows that the supremum~\eqref{eqn:prox-dual} is uniquely attained by
$\Pi_\xdomain^\prox(-z, \stepsize)$. Moreover, for any fixed $z$, we note that
at $x = 0$, $\<z, x\> - \frac{1}{\stepsize}\prox(x) - \bindic{x \in \xdomain}
= 0$. Thus, we can restrict the supremum in \eqref{eqn:prox-dual} to the set
\[
\Big\{x ~ \mid ~ \frac{1}{\stepsize} \prox(x) + \bindic{x \in \xdomain}
- \<z, x\> \le 0 \Big\}
= \xdomain \cap \Big\{x ~ \mid ~ \frac{1}{\stepsize} \prox(x)
- \<z, x\> \le 0 \Big\},
\]
which is compact since $\xdomain$ is closed and $\prox$ is strongly convex.
Thus, since the supremum is uniquely attained and $\<z, x\>$ is differentiable
in $z$, $\nabla \proxdual_\stepsize(-z) = \Pi_\xdomain^\prox(z,
\stepsize)$~\cite[Theorem 4.4.2]{HiriartUrrutyLe96}.

This fact has two important consequences. First, since the projection
is Lipschitz-continuous (see Lemma~\ref{lemma:projection}), we have
the bound
\begin{equation*}
  \norm{\nabla \proxdual_\stepsize(-z) - \nabla
  \proxdual_\stepsize(-z - g)}
  = \norm{\Pi_\xdomain^\prox(z, \stepsize) -\Pi_\xdomain^\prox(z + g,
  \stepsize)}
  \leq \stepsize \dnorm{g}.
\end{equation*}
Consequently, an integration argument (e.g.,~\cite[Lemma 1.2.3]{Nesterov04})
yields the upper bound
\begin{equation}
  \label{eqn:lipschitz-dual}
  \proxdual_\stepsize(-z - g) \le \proxdual_\stepsize(-z) - \<g,
  \nabla\proxdual_\stepsize(-z)\> + \half \stepsize \dnorm{g}^2.
\end{equation}
The second consequence is that we have
\begin{align*}
  x(t) & = \nabla\proxdual_{\stepsize(t-1)}(-z(t)) =
  \Pi_{\xdomain}(z(t), \stepsize(t-1)).
\end{align*}


\subsection{Proof of Lemma~\ref{lemma:ftrl-linear}}
\label{AppLemftrl-linear}

To bound the sequence of inner products, we note that for any $x^* \in
\xdomain$, we have
\begin{align}
  -\sum_{t = 1}^T \<g(t), x^*\> & \le \sup_{x \in
    \xdomain}\left\{-\sum_{t=1}^T \<g(t), x\> -
  \frac{1}{\stepsize(T)}\prox(x)\right\} + \frac{1}{\stepsize(T)}
  \prox(x^*) \nonumber \\ & = \proxdual_{\stepsize(T)}(-z(T+1)) +
  \frac{1}{\stepsize(T)}\prox(x^*).
  \label{eqn:bound-x-star}
\end{align}

By definition of the conjugate function $\proxdual_\stepsize$, whenever we
have $\stepsize(t) \le \stepsize(t - 1)$, then we are guaranteed that
$\proxdual_{\stepsize(t)}(z) \le \proxdual_{\stepsize(t - 1)}(z)$ for all $z
\in \R^d$. Thus, using the upper bound~\eqref{eqn:lipschitz-dual} and the
relations $x(t) = \nabla\proxdual_{\stepsize(t - 1)}(-z(t))$ and $z(t + 1) =
z(t) + g(t)$, we obtain
\begin{align*}
  \proxdual_{\stepsize(t)}(-z(t + 1)) & \leq  \proxdual_{\stepsize(t -
  1)}(-z(t + 1))  \\
& = \proxdual_{\stepsize(t - 1)}(-z(t) - g(t)) \\
& \leq \proxdual_{\stepsize(t - 1)}(-z(t)) - \<g(t), x(t)\> + \half
  \stepsize(t - 1) \dnorm{g(t)}^2.
\end{align*}
Rearranging terms yields
\begin{equation}
  \<g(t), x(t)\> \leq \proxdual_{\stepsize(t - 1)}(-z(t)) -
  \proxdual_{\stepsize(t)}(-z(t+1)) + \half \stepsize(t -
  1)\dnorm{g(t)}^2.
  \label{eqn:inductive-step}
\end{equation}

Finally, we combine the upper bound on $\<g(t), x(t)\>$ from equation
\eqref{eqn:inductive-step} with the earlier
bound~\eqref{eqn:bound-x-star}, thereby obtaining that for any $x^*
\in \xdomain$, the sum $S(T) = \sum_{t=1}^T \<g(t), x(t) - x^*\>$ is
upper bounded as
\begin{align*}
  \lefteqn{S(T) \leq \sum_{t=1}^T \<g(t), x(t)\> +
  \proxdual_{\stepsize(T)}(-z(T+1)) + \frac{1}{\stepsize(T)}
  \prox(x^*)} \\
  & \leq \half \sum_{t=1}^T \stepsize(t - 1)\dnorm{g(t)}^2 +
  \sum_{t=1}^T \big[\proxdual_{\stepsize(t - 1)}(-z(t)) -
    \proxdual_{\stepsize(t)}(-z(t + 1)) \big] +
  \proxdual_{\stepsize(T)}(-z(T + 1)) + \frac{1}{\stepsize(T)}
  \prox(x^*) \quad \\
  & = \half \sum_{t=1}^T \stepsize(t - 1)\dnorm{g(t)}^2 +
  \frac{1}{\stepsize(T)} \prox(x^*).
\end{align*}
The last line exploited the facts that $z(1) = 0$ and $\proxdual_\stepsize(0)
= 0$.  This completes the proof of the claim.



\subsection{Proof of Lemma~\ref{lemma:transfer-y-to-x}}
\label{Applemma:transfer-y-to-x}

Via the $L$-Lipschitz continuity of the $f_i$, we can write
\begin{align*}
  \sum_{t=1}^T f(x_i(t)) - f(x^*)
  & = \sum_{t=1}^T f(y(t)) - f(x^*) + \sum_{t=1}^T f(x_i(t)) - f(y(t)) \\
  & \le \sum_{t=1}^T f(y(t)) - f(x^*) + \sum_{t=1}^T  L \norm{x_i(t)
    - y(t)}.
\end{align*}
For the second bound, we again use the $L$-Lipschitz continuity of the $f_i$
and the triangle inequality,
\begin{align*}
  f(\what{x}_i(T)) - f(x^*)
  &= f(\what{y}(T)) - f(x^*) + f(\what{x}_i(T)) - f(\what{y}(T)) \\
  & \le f(\what{y}(T)) - f(x^*) + L \norm{\what{x}_i(T) - \what{y}(T)}
  \le f(\what{y}(T)) - f(x^*)
  + \frac{L}{T} \sum_{t=1}^T\norm{x_i(t) - y(t)}.
\end{align*}
Lipschitz-continuity of the projection (Lemma~\ref{lemma:projection}) shows
that $\norm{x_i(t) - y(t)} \le \stepsize(t)\dnorm{\bar{z}(t) - z_i(t)}$ which
gives both the desired results.

\subsection{Lipschitz continuity of projections}
\label{Applemma:projection}
The following lemma on the Lipschitz-continuity of the projection
operator is well-known, but we state and prove it for completeness.
\begin{lemma}
  \label{lemma:projection}
  For an arbitrary pair $u, v \in \R^d$, we have
  \begin{equation*}
    \norm{\Pi_\xdomain^\prox(u, \stepsize) - \Pi_\xdomain^\prox(v, \stepsize)}
    \le \stepsize\dnorm{u - v}.
  \end{equation*}
\end{lemma}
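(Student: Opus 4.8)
The plan is to derive the bound from the first-order optimality conditions for the two projection problems, combined with the $1$-strong convexity of $\prox$. Write $x_u \defeq \Pi_\xdomain^\prox(u, \stepsize)$ and $x_v \defeq \Pi_\xdomain^\prox(v, \stepsize)$. By the definition~\eqref{eqn:projection}, $x_u$ minimizes the differentiable convex objective $x \mapsto \<u, x\> + \frac{1}{\stepsize}\prox(x)$ over the convex set $\xdomain$, and likewise $x_v$ for $v$. Since each is a constrained minimizer of a convex, differentiable function over a convex set, the variational inequalities
\[
  \Big\<u + \tfrac{1}{\stepsize}\nabla\prox(x_u),\; x - x_u\Big\> \ge 0
  \quad \mbox{and} \quad
  \Big\<v + \tfrac{1}{\stepsize}\nabla\prox(x_v),\; x - x_v\Big\> \ge 0
\]
hold for every $x \in \xdomain$.

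First I would instantiate the first inequality at $x = x_v$ and the second at $x = x_u$, then add them. After cancellation this rearranges to
\[
  \frac{1}{\stepsize}\big\<\nabla\prox(x_u) - \nabla\prox(x_v),\; x_u - x_v\big\>
  \le \<v - u,\; x_u - x_v\>.
\]
Next I would lower-bound the left-hand side using strong convexity: summing the two defining inequalities of $1$-strong convexity applied to the pair $(x_u, x_v)$ yields the standard monotonicity estimate $\<\nabla\prox(x_u) - \nabla\prox(x_v),\, x_u - x_v\> \ge \norm{x_u - x_v}^2$. For the right-hand side I would apply the definition of the dual norm, giving $\<v - u,\, x_u - x_v\> \le \dnorm{u - v}\,\norm{x_u - x_v}$. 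Combining these two estimates produces
\[
  \frac{1}{\stepsize}\norm{x_u - x_v}^2 \le \dnorm{u - v}\,\norm{x_u - x_v},
\]
and dividing through by $\norm{x_u - x_v}$ (the claim is trivial when this quantity vanishes) gives exactly $\norm{x_u - x_v} \le \stepsize\dnorm{u - v}$, as desired.

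There is no serious obstacle here; the only points requiring care are bookkeeping ones. First, one should note that the minimizers $x_u, x_v$ are well defined and unique, which holds because $\prox$ is strongly convex and $\xdomain$ is closed---the same attainment argument already invoked in Appendix~\ref{app:lazy}---so that the first-order variational inequalities are both necessary and sufficient. Second, one must keep the roles of $\norm{\cdot}$ and its dual $\dnorm{\cdot}$ straight across the two applications of the pairing, since the strong convexity step contributes a $\norm{\cdot}^2$ term while the linear perturbation in $u, v$ is naturally measured in $\dnorm{\cdot}$; these match up precisely through the $1/\stepsize$ weighting to deliver the claimed $\stepsize$-Lipschitz constant.
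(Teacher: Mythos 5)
Your proof is correct and follows essentially the same route as the paper's own argument: both rely on the first-order optimality (variational inequality) conditions at the two minimizers, add them after cross-instantiation, lower-bound via the monotonicity consequence of $1$-strong convexity of $\prox$, and upper-bound the linear term by the dual-norm pairing before dividing through. The only differences are cosmetic (notation and the explicit remark about existence/uniqueness of the minimizers, which the paper leaves implicit).
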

\begin{proof}
Lemma~\ref{lemma:projection} is essentially an immediate consequence
of the relationship between strong-convexity and Lipschitz continuity
of the gradient for conjugate functions~\cite[Theorem
X.4.2.1]{HiriartUrrutyLe96b}, but we give a short proof for
completeness.  For an arbitrary pair $u, v \in \R^d$, denote \mbox{$w
= \Pi_\xdomain^\prox(u, \alpha)$} and \mbox{$x = \Pi_\xdomain^\prox(v,
\alpha)$.}  By the first-order optimality conditions for convex
minimization, for any $y \in \xdomain$, we have
\begin{equation*}
  \<u + \frac{1}{\stepsize}\nabla\prox(w), y - w\> \ge 0
  ~~~ {\rm and} ~~~
  \<v + \frac{1}{\stepsize}\nabla\prox(x), y - x\> \ge 0.
\end{equation*}
Setting $y = x$ and $y = w$ in these two inequalities (respectively)
yields
\begin{equation*}
  \<\stepsize u + \nabla\prox(w), x - w\> \ge 0 ~~~ {\rm and} ~~~
  \<\stepsize v + \nabla\prox(x), w - x\> \ge 0.
  \end{equation*}
Adding the above two inequalities, we obtain the bound
  \begin{equation}
    \<\nabla \prox(w) - \nabla\prox(x), w - x\> \le \stepsize \<u - v,
    x - w\> \le \stepsize\dnorm{v_1 - v_2}\norm{w - x}.
    \label{eqn:gradnormlower}
  \end{equation}
  On the other hand the strong convexity of $\psi$ implies that
  $\psi(w) \ge \psi(x) + \<\psi(x), w - x\> + \half \norm{x - w}^2$,
  with an analogous bound with the roles of $x$ and $w$ exchanged.
  Some algebra then leads to
  \begin{equation*}
  \<\nabla\prox(w) - \nabla\prox(x), w - x \> \ge \norm{w - x}^2,
  \end{equation*}
  which, when combined with~\eqref{eqn:gradnormlower}, gives
  the desired result.
\end{proof}

\section{Background on stochastic matrices}
\label{app:markovchain}

In this section, we briefly review some well-known properties of
stochastic matrices; we refer the reader to Chapter 8 of Horn and
Johnson~\cite{HornJo85} for additional detail. For an $n \times n$
matrix $A$, we let its singular values be $\sigma_1(A) \geq
\sigma_2(A) \geq \cdots \ge \sigma_n(A)$, and for a real symmetric
$A$, we define the eigenvalues $\lambda_1(A) \geq \lambda_2(A) \geq
\cdots \geq \lambda_n(A)$. Let $\onevec$ be the all ones vector. In
our setting, $\stochmat = [\stochvec_1 ~ \cdots ~ \stochvec_n] \in
\R^{n \times n}$ is a doubly stochastic matrix, so that
$\stochmat\onevec = \onevec$ and $\onevec^T \stochmat =
\onevec^T$. We have $\sigma_1(\stochmat) = 1$,
$\lambda_1(\stochmat^T\stochmat) = 1$, and $1-\sigma_2(\stochmat)$ is
the spectral gap, which is known to determine the mixing properties of
the Markov chain induced by $\stochmat$~\cite{LevinPeWi08}.

In order to establish the connection between mixing and spectral gap,
define the uniform matrix \mbox{$\Fmat \defeq \frac{1}{n}\onevec \onevec^T$}.
Observe that $\Fmat$ is idempotent ($\Fmat^2 = \Fmat$), and moreover
it satisfies $\stochmat \Fmat = \Fmat \stochmat = \Fmat$.  By
construction, the eigenspectrum of $\stochmat - \Fmat$ is equal
to that of $\stochmat$ except that the largest eigenvalue $1$ is
removed. Similarly, the eigenspectrum of $(\stochmat -
\Fmat)^T(\stochmat - \Fmat) = \stochmat^T \stochmat - \Fmat^T
\stochmat - \stochmat^T \Fmat + \Fmat^T \Fmat = \stochmat^T \stochmat -
F^T \Fmat$ is identical to that of $\stochmat^T\stochmat$ but with
$\lambda_1(\stochmat^T\stochmat) = 1$ removed.  Given these
properties, a simple calculation yields that for any integer $t =
1,2,\ldots$, we have $(\stochmat - \Fmat)^t = \stochmat^t - \Fmat$.
Consequently, for any $x \in \R^n$, we have
\begin{equation*}
  \ltwo{\stochmat^t x - \Fmat x} = \ltwo{(\stochmat - \Fmat)^t x} \le
  \sigma_1(\stochmat - \Fmat)\ltwo{(\stochmat - \Fmat)^{t-1}x} \le
  ~ \cdots ~ \le \sigma_2(\stochmat)^t \ltwo{x}.
\end{equation*}
If we take $x = e_i$, denoting a canonical basis vector for
$i=1,\dots,n$, then we see that $\linf{\stochmat^t - L} \le
\sigma_2(\stochmat)^t$. Taking $x \in \Delta_n$, the $n$-dimensional
simplex, gives 
\begin{equation*}
\tvnorm{\stochmat^tx - \onevec/n}
= \half \lone{\stochmat^tx - \onevec/n}
\le \half \sqrt{n}\ltwo{\stochmat^tx - \onevec/n}
\le \half \sigma_2(\stochmat)^t \sqrt{n},
\end{equation*}
which establishes the bound~\eqref{eqn:tv-bound}.  (The $\sqrt{n}$ factor in
the bound is standard in the Markov chain literature,
e.g.,~\cite[Proposition 3]{DiaconisSt91}.)

\section{Eigenvalues of paths}
\label{sec:k-connected-path}

Let $G$ be a graph and $S$ be a subset of the nodes in the graph. Let
$E(S, S^c)$ denote the set of edges crossing between $S$ and $S^c$,
and let the volume of $S$ be the sum of the degrees of the nodes in
$S$, that is, $\vol(S) = \sum_{i \in S} \degree_i$. The Cheeger
constant of a graph $G$ is defined as
\begin{equation}
  \label{eqn:cheeger-constant}
  h_G \defeq \min_{S \subset \vertex} \frac{\card(E(S,
  S^c))}{\min\{\vol(S), \vol(S^c)\}}.
\end{equation}
If $\laplacian$ is the Laplacian of $G$, then $2 h_G \ge
\lambda_{n-1}(\laplacian) > \half h_G^2$ (e.g., see Lemma 2.1 and
Theorem 2.2 in Chung~\cite{Chung98}).

\begin{lemma}
  \label{proposition:k-connected-path}
  Let $G$ be a $k$-connected path with $n$ nodes and $k \le \sqrt{n}$.
  Then its normalized graph Laplacian $\laplacian$ satisfies
  $\lambda_{n-1}(\laplacian) = \Theta(k^2/n^2)$.
\end{lemma}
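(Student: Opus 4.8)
The plan is to sandwich $\lambda_{n-1}(\laplacian)$ between an upper and a lower bound, both of order $k^2/n^2$. I would obtain the upper bound from the Rayleigh-quotient (variational) characterization of the second-smallest eigenvalue, and the matching lower bound from the Cheeger bounds stated above, namely $\lambda_{n-1}(\laplacian) > \half h_G^2$, combined with an isoperimetric estimate $h_G = \Omega(k/n)$. It is worth noting at the outset that Cheeger's inequality cannot by itself supply the upper bound: its other half gives only $\lambda_{n-1}(\laplacian) \le 2 h_G = \order(k/n)$, which is off by a factor of $n/k$. The path is precisely the regime in which the lower Cheeger bound is tight, so the upper bound must come from an explicit test vector.

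For the upper bound I would use the identity
\[
\lambda_{n-1}(\laplacian) = \min_{\sum_i \degree_i y_i = 0,\ y \ne 0} \frac{\sum_{(i,j) \in E} (y_i - y_j)^2}{\sum_i \degree_i y_i^2},
\]
which follows from $\laplacian = I - D^{-1/2} A D^{-1/2}$ after the substitution $x = D^{1/2} y$ (the null direction being $D^{1/2}\onevec$). Plugging in the linear ramp $y_i = i - \mu$, with $\mu = (\sum_i \degree_i\, i)/(\sum_i \degree_i)$ chosen to meet the constraint, every edge $(i,j)$ of the $k$-connected path satisfies $|i - j| \le k$, so the numerator is $\sum_{d=1}^k (n - d) d^2 = \order(n k^3)$. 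Since every degree obeys $k \le \degree_i \le 2k$ and $\sum_i (i - \mu)^2 = \Omega(n^3)$, the denominator is $\Omega(k n^3)$, and the quotient is $\order(k^2/n^2)$.

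For the lower bound I would estimate $h_G$. The direction $h_G = \order(k/n)$ is immediate from the midpoint cut $S = \{1,\dots,\lfloor n/2\rfloor\}$, whose boundary has $\sum_{d=1}^k d = \Theta(k^2)$ edges while $\min\{\vol(S),\vol(S^c)\} = \Theta(nk)$. The substantive direction is $h_G = \Omega(k/n)$. Because every degree lies in $[k,2k]$, we have $\vol(S) = \Theta(k\,|S|)$, so it suffices to show $\card(E(S, S^c)) = \Omega(k^2 |S|/n)$ for every $S$ with $|S| \le n/2$. I would prove this by a compression (leftward-shifting) argument showing that, in the $k$-th power of a path, initial segments minimize the edge boundary among sets of a fixed size; replacing $S$ by $\{1,\dots,|S|\}$ does not increase $\card(E(S,S^c))$, and for an initial segment of size $m$ the boundary equals $\Theta(\min\{k^2, km\})$, which is $\Omega(k^2 m/n)$ once $m \le n/2$. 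Feeding $h_G = \Omega(k/n)$ into $\lambda_{n-1}(\laplacian) > \half h_G^2$ gives $\lambda_{n-1}(\laplacian) = \Omega(k^2/n^2)$, and combined with the previous paragraph this yields $\Theta(k^2/n^2)$.

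The main obstacle is the compression lemma behind $h_G = \Omega(k/n)$: verifying rigorously that contiguous initial segments are isoperimetrically optimal in the $k$-th power of the path. The delicate part is accounting for all edge lengths $d = 1,\dots,k$ simultaneously while shifting a node from a larger index into the smallest vacant one, and checking that each such move destroys at least as many crossing edges as it creates. Everything else---the Rayleigh computation, the midpoint cut, and the evaluation of the initial-segment boundary---is routine arithmetic. The hypothesis $k \le \sqrt n$ comfortably suffices for the asymptotic simplifications (indeed the argument above already controls both bounds for all $k \le n/2$), and it is the form in which the estimate is needed for the grid application.
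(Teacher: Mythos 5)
Your overall skeleton matches the paper's: both proofs get the lower bound from Cheeger's inequality $\lambda_{n-1}(\laplacian) > \half h_G^2$ together with $h_G = \Omega(k/n)$, and both need a separate argument for the upper bound. On the upper bound you genuinely diverge from the paper, which simply cites Theorem 4.13 of Chung (comparison with the $k$-connected cycle); your explicit Rayleigh-quotient computation with the ramp $y_i = i - \mu$ is correct and self-contained (numerator $\sum_{d=1}^k (n-d)d^2 = \order(nk^3)$, denominator at least $k\sum_i(i-\mu)^2 = \Omega(kn^3)$), and this is a perfectly good substitute. Your reduction of $h_G = \Omega(k/n)$ to the claim $\card(E(S,S^c)) = \Omega(\min\{k^2, k|S|\})$ for $|S| \le n/2$, and your evaluation of the boundary of initial segments, are also correct.

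The gap is exactly where you flagged it, and it is genuine: the per-move monotonicity underlying your compression argument is false. Take $n = 10$, $k = 2$, and $S = \{1, 8, 9, 10\}$, so that $\card(E(S,S^c)) = 5$. The smallest vacant index is $2$, and moving \emph{any} node of $S$ there strictly increases the boundary: moving $8$ gives $\{1,2,9,10\}$ with $6$ crossing edges, moving $9$ gives $\{1,2,8,10\}$ with $7$, and moving $10$ gives $\{1,2,8,9\}$ with $8$. Thus $S$ is a strict local minimum for single-element leftward shifts, yet it is not an initial segment and its boundary exceeds that of $\{1,2,3,4\}$, which is $3$; so no schedule of such moves can reach the initial segment monotonically. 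The structural reason is that the ends of the path shelter vertices: a set split into blocks pushed against \emph{both} ends is cheap, and prying a vertex off the far block costs more crossing edges than are saved on the left. This is precisely the configuration the paper is careful about: its proof never claims initial segments are isoperimetrically optimal. Instead it shifts and collapses the optimal Cheeger set into at most \emph{two} blocks, one against each end, and then argues that when $|S| \ge k$ at least one of the two blocks already contributes $\Omega(k^2)$ crossing edges, handling $|S| < k$ by a degree count whose conclusion $1/k \ge k/n$ uses the hypothesis $k \le \sqrt{n}$. That weaker statement suffices for $h_G = \Omega(k/n)$ and sidesteps the full isoperimetric claim. To repair your argument, either replace single-element moves by block-level moves (merge the entire rightmost block onto the left initial segment in one step, and verify that this operation never increases the boundary), or abandon initial-segment optimality and argue, as the paper does, directly about two-end-block configurations.
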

\begin{proof}
We invoke Theorem 4.13 in Chung~\cite{Chung98} to conclude that
$\lambda_{n-1}(\laplacian) = \order(k^2/n^2)$, since $G$ is a subgraph of the
$k$-connected cycle. It thus suffices to prove that the Cheeger constant is
lower bounded as $h_\graph = \Omega(k/n)$.

Let $\Sstar$ be the set of nodes achieving the minimum in the
definition~\eqref{eqn:cheeger-constant}. To make the rest of the proof
easier, assume that the degree of each node is $2k$.  (We may do so
without loss of generality, since it only has the effect of increasing
$\vol(\Sstar)$ and $\vol(\Sstar^c)$ in the Cheeger constant
calculation, and so any Cheeger constant calculated under this
assumption lower bounds the true Cheeger constant.)

First, note that one of the nodes in $\Sstar$ must be against the end
of the path---if not, shifting the nodes in $\Sstar$ in one direction
(taking into account that we must pick the direction in which more
nodes are brought near the end of the path) can only decrease
$\card(E(\Sstar, \Sstar^c))$. Now we show that all of the nodes in
$\Sstar$ must be directly adjacent to one another. Suppose the nodes
are not adjacent. Since $k \le \sqrt{n}$, there must be a pair of
nodes in $\Sstar$ with a distance of at least $k$. Let $i \in
\Sstar^c$ be between those two nodes, and let $\Sstar_\ell$ denote the
nodes to the left of $i$ and $\Sstar_r$ the nodes to the
right. Collapsing all the nodes in $\Sstar_r$ to the rightmost end of
the path and all the nodes in $\Sstar_\ell$ to the leftmost end can
only decrease $\card(E(\Sstar, \Sstar^c))$. If $|\Sstar| \ge k$, then
at least one of the sets $\Sstar_r$ and $\Sstar_\ell$ shares $k(k -
1)/4$ edges with $\Sstar^c$.  Otherwise, if $|\Sstar| < k$, then
$\card(E(\Sstar, \Sstar^c)) \ge k$ and $\vol(\Sstar) \le k^2$, so
$\card(E(\Sstar, \Sstar^c)) / \vol(S) \ge 1/k$.  Under the assumption
$k^2 \leq n$, we have $1/k \leq k/n$, from which the result follows.
\end{proof}


\section{Composite Objectives}
\label{app:composite}

In this section, we show how to generalize the dual averaging
algorithm to incorporate composite objectives, specifically those of
the form $f + \varphi$ for known $\varphi$. Though it is possible to
perform similar derivations to those in Lemma~\ref{lemma:ftrl-linear},
for brevity we refer to recent work of
Xiao~\cite{Xiao10}. Nonetheless, the algorithm is conceptually very
similar to the dual averaging algorithm
(updates~\eqref{eqn:lazy-unproject} and~\eqref{eqn:lazy-project}), and
equally as simple to write.  We assume that $\varphi$ is closed convex
and non-negative, and $\xdomain$ is closed. We define the composite
projection operator $\compositeproject{t}$ as
\begin{equation}
  \label{eqn:composite-project}
  \compositeproject{t}(z) = \argmin_{x \in \xdomain} \Big\{ \<z, x\>
  + t \varphi(x) + \frac{1}{\stepsize(t)} \psi(x)\Big\}.
\end{equation}
The mapping $\compositeproject{t}$ is $\stepsize(t)$-Lipschitz with respect to
$\norm{\cdot}$ and $\dnorm{\cdot}$, that is,
\begin{equation}
  \label{eqn:composite-lipschitz}
  \norm{\compositeproject{t}(z_1) - \compositeproject{t}(z_2)}
  \le \stepsize(t) \dnorm{z_1 - z_2}.
\end{equation}
As in Lemma~\ref{lemma:projection}, \eqref{eqn:composite-lipschitz} is
a consequence of the fact that the conjugate dual of a
$1/\stepsize(t)$-strongly convex function has $\stepsize(t)$-Lipschitz
continuous gradient with respect to the associated dual norm, and the
gradient of the conjugate of $t \varphi(x) + \frac{1}{\stepsize(t)}
\prox(x)$ is simply $\compositeproject{t}(z)$~\cite[Theorem
  X.4.2.1]{HiriartUrrutyLe96b}.

The distributed algorithm based on the
update~\eqref{eqn:composite-project} is essentially identical to the
dual averaging algorithm discussed in the main body of the paper. Each
agent $i$ maintains the gradient vector
\begin{equation}
  z_i(t + 1) = \sum_{j=1}^n \stochvec_{ij}(t) z_j(t) - g_i(t)
  ~~~ {\rm where} ~~~
  \E g_i(t) \in \partial f_i(x_i(t)).
  \label{eqn:composite-lazy-unproject}
\end{equation}
The update to $x_i(t + 1)$ is then
\begin{equation}
  \label{eqn:composite-lazy-project}
  x_i(t + 1) = \compositeproject{t}(-z_i(t + 1)).
\end{equation}
As in \eqref{eqn:mean-z-update}, we have $\bar{z}(t + 1) = \bar{z}(t) -
\frac{1}{n} \sum_{i=1}^n g_j(t)$. The following proposition, a simplification
of~\cite[Section B.2]{Xiao10}, allows us to give a convergence guarantee for
the algorithm described by \eqref{eqn:composite-lazy-unproject} and
\eqref{eqn:composite-lazy-project}.
\begin{proposition}
  \label{proposition:linear-rda}
  Let $\stepsize(t)$ be a decreasing sequence and $g(t) \in \R^d$ be an
  arbitrary sequence of vectors. If $x(t + 1) =
  \compositeproject{t}(\sum_{\tau=1}^t g(t))$, then for any $x^* \in \xdomain$,
  \begin{equation*}
  \sum_{t=1}^T \<g(t), x(t) - x^*\> + \varphi(x(t)) - \varphi(x^*)
  \le \frac{1}{\stepsize(T)}\psi(x^*)
  + \half \sum_{t=1}^T \stepsize(t-1)\norm{g(t)}_*^2.
  \end{equation*}
\end{proposition}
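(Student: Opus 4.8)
The plan is to imitate the proof of Lemma~\ref{lemma:ftrl-linear} in Appendix~\ref{AppLemftrl-linear}, replacing the prox-conjugate $\proxdual_\stepsize$ by a composite conjugate that also carries the accumulated regularizer. Writing $z(t) \defeq -\sum_{\tau=1}^{t-1} g(\tau)$ so that $z(1) = 0$ and $z(t+1) = z(t) - g(t)$, the iterate is $x(t+1) = \compositeproject{t}(-z(t+1))$. For each $t$ I would define
\[
\proxdual_t(z) \defeq \sup_{x \in \xdomain} \Big\{ \<z, x\> - t\varphi(x) - \frac{1}{\stepsize(t)}\prox(x) \Big\},
\]
whose supremum is uniquely attained (the inner objective is closed and $\frac{1}{\stepsize(t)}$-strongly convex), so that $\nabla \proxdual_t(z) = \compositeproject{t}(-z)$ and in particular $x(t) = \nabla\proxdual_{t-1}(z(t))$. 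Because $t\varphi + \frac{1}{\stepsize(t)}\prox$ is $\frac{1}{\stepsize(t)}$-strongly convex, the Lipschitz property~\eqref{eqn:composite-lipschitz} of $\compositeproject{t}$ upgrades (by the same integration argument used for \eqref{eqn:lipschitz-dual} from Lemma~\ref{lemma:projection}) to the descent inequality $\proxdual_t(z - g) \le \proxdual_t(z) - \<g, \nabla\proxdual_t(z)\> + \half\stepsize(t)\dnorm{g}^2$.

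With these ingredients I would carry out two steps. First, evaluating the defining supremum of $\proxdual_T$ at the fixed point $x^*$ and using $z(T+1) = -\sum_{t=1}^T g(t)$ gives the analogue of~\eqref{eqn:bound-x-star},
\[
-\sum_{t=1}^T \<g(t), x^*\> - T\varphi(x^*) \le \proxdual_T(z(T+1)) + \frac{1}{\stepsize(T)}\prox(x^*).
\]
Second, I would establish the per-step recursion
\[
\proxdual_t(z(t+1)) \le \proxdual_{t-1}(z(t)) - \<g(t), x(t)\> - \varphi(x(t+1)) + \half\stepsize(t-1)\dnorm{g(t)}^2
\]
by chaining three moves: (i) at the maximizer $x(t+1) = \nabla\proxdual_t(z(t+1))$, evaluating the count-$(t-1)$ supremum at $x(t+1)$ shows that dropping one of the $t$ copies of $\varphi$ raises the conjugate by exactly $\varphi(x(t+1))$; (ii) since $\stepsize(t) \le \stepsize(t-1)$ and $\prox \ge 0$, replacing $\stepsize(t)$ by $\stepsize(t-1)$ only raises the conjugate, turning the count-$(t-1)$ term into $\proxdual_{t-1}(z(t+1)) = \proxdual_{t-1}(z(t) - g(t))$; and (iii) applying the descent inequality at level $t-1$ together with $\nabla\proxdual_{t-1}(z(t)) = x(t)$.

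Finally, I would telescope the recursion over $t = 1, \ldots, T$, using $z(1) = 0$ and $\proxdual_0(0) = \sup_x\{-\frac{1}{\stepsize(0)}\prox(x)\} = 0$, then substitute the resulting bound on $\proxdual_T(z(T+1))$ into the first inequality and rearrange; the $\frac{1}{\stepsize(T)}\prox(x^*)$ and $\half\sum_t \stepsize(t-1)\dnorm{g(t)}^2$ terms then appear exactly as claimed. I expect the one genuine subtlety to be bookkeeping of the regularizer: because each recursion step produces $\varphi(x(t+1))$ rather than $\varphi(x(t))$, telescoping yields $\sum_{t=1}^T \varphi(x(t+1))$, and reindexing to the claimed $\sum_{t=1}^T \varphi(x(t))$ leaves the boundary contribution $\varphi(x(1)) - \varphi(x(T+1))$. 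Since $x(1) = \compositeproject{0}(0) = 0$ and $\varphi \ge 0$, this reduces to $\varphi(0) - \varphi(x(T+1)) \le \varphi(0)$, which vanishes under the standard normalization $\varphi(0) = 0$ (the analogue of the assumption $\prox(0) = 0$); handling this index shift cleanly is the main point requiring care.
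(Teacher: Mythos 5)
Your proof is correct, and it is worth noting that the paper itself never proves this proposition: it explicitly defers to Xiao~\cite[Section B.2]{Xiao10} after remarking that derivations similar to those of Lemma~\ref{lemma:ftrl-linear} are possible. What you have written is precisely that derivation, so your route coincides with the one the paper gestures at: the composite conjugate $\proxdual_t$ plays the role of $\proxdual_\stepsize$ from Appendix~\ref{app:lazy}, the per-step recursion is obtained by the three moves you describe (drop one copy of $\varphi$ at the maximizer $x(t+1)$, relax $\stepsize(t)$ to $\stepsize(t-1)$ using $\prox \ge 0$ and monotonicity of the stepsizes, then apply the descent inequality at level $t-1$ with $\nabla \proxdual_{t-1}(z(t)) = x(t)$), and the telescoping closes with $z(1) = 0$ and $\proxdual_0(0) = 0$. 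One small wording correction: evaluating the count-$(t-1)$ supremum at $x(t+1)$ shows that it exceeds $\proxdual_t(z(t+1))$ by \emph{at least} $\varphi(x(t+1))$, not exactly (the evaluation is only a lower bound on that supremum); the inequality points in the direction you need, so nothing breaks.

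The subtlety you flag at the end is genuine, and your resolution is not merely cosmetic---it is necessary. The statement as printed leaves $x(1)$ undefined and assumes only that $\varphi$ is closed, convex, and non-negative; under the natural initialization $x(1) = \compositeproject{0}(0) = 0$, the claimed bound is actually \emph{false} without the normalization $\varphi(0) = 0$. For instance, take $\xdomain = [-1,1]$, $\prox(x) = \half x^2$, $\varphi(x) = \max\{1 - x, 0\}$, $g(t) \equiv 0$, non-increasing stepsizes $\stepsize(t) \equiv 1$, and $x^* = 1$: then $x(t+1) = \argmin_{x \in \xdomain}\{t(1-x) + \half x^2\} = 1$ for all $t \ge 1$, so the left-hand side equals $\varphi(x(1)) = \varphi(0) = 1$ while the right-hand side equals $\prox(1) = \half$. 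Thus the boundary term $\varphi(x(1)) - \varphi(x(T+1))$ produced by your reindexing cannot be discarded for free; Xiao's original analysis imposes exactly the hypothesis you introduce (the minimizer of the proximal function must also minimize the regularizer, with the minimum normalized to zero), and the paper implicitly inherits it. With that hypothesis stated, your proof is complete and matches the intended argument.
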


The above proposition, combined with the techniques used to derive
Theorem~\ref{theorem:master-convergence}, allow us to easily prove convergence
of distributed composite-objective dual averaging. As earlier, let $y(t) =
\compositeproject{t}(-\bar{z}(t))$, and assume that the $f_i$ are
$L$-Lipschitz with respect to $\norm{\cdot}$. Then as in
\eqref{eqn:dist-lazy-to-bound}, \eqref{eqn:first-order-dist-bound}, and
\eqref{eqn:first-order-y-replacement}, for any $x^* \in \xdomain$, we
immediately have
\begin{align*}
  \lefteqn{\sum_{t=1}^T \big[
      f(y(t)) + \varphi(y(t)) - f(x^*) - \varphi(x^*)\big]} \\
  & \le \sum_{t=1}^T \frac{1}{n}\<\sum_{i=1}^n g_i(t), y(t) - x^*\>
  + \sum_{t=1}^T \varphi(y(t)) - \varphi(x^*)
  + \sum_{t=1}^T \sum_{i=1}^n \frac{2L}{n}\norm{y(t) - x_i(t)}.
\end{align*}
By definition of $y(t)$, we see that Proposition~\ref{proposition:linear-rda}
bounds the above by
\begin{equation*}
\frac{1}{\stepsize(T)} \psi(x^*) + \half \sum_{t=1}^T
\stepsize(t - 1) L^2 + \sum_{t=1}^T \sum_{i=1}^n \frac{2L}{n} \norm{y(t)
  - x_i(t)}.
\end{equation*}
Finally, we use the fact that the mapping $\compositeproject{t}$ is
$\stepsize(t)$-Lipschitz to see that for the distributed composite-objective
projection algorithm of \eqref{eqn:composite-lazy-unproject} and
\eqref{eqn:composite-lazy-project},
\begin{equation}
  \label{eqn:composite-lazy-convergence}
  \sum_{t=1}^T f(y(t)) + \varphi(y(t)) - f(x^*) - \varphi(x^*)
  \le \frac{1}{\stepsize(T)} \psi(x^*)
  + \half \sum_{t=1}^T \stepsize(t - 1) L^2
  + \frac{2 L}{n} \sum_{t=1}^T \stepsize(t) \sum_{i=1}^n
  \norm{\bar{z}(t) - z_i(t)}_*.
\end{equation}
Any of the techniques in the prequel can be used to bound
\eqref{eqn:composite-lazy-convergence}.


\bibliographystyle{amsalpha}
\bibliography{bib}

\end{document}